\title{Surface homeomorphisms with big rotation set}
\date{\today}
\author{Pierre-Antoine Guih\'eneuf}
\address{Sorbonne Universit\'e, Universit\'e Paris Cit\'e, CNRS, IMJ-PRG, F-75005 Paris, France --- 
IRL Jean-Christophe Yoccoz CNRS / IMPA, Estr. Dona Castorina, 110
Jardim Bot\^anico, Rio de Janeiro, Brasil}
\email{pierre-antoine.guiheneuf@imj-prg.fr}
\thanks{The author thanks the Jean-Christophe Yoccoz international laboratory CNRS/IMPA for the semester in Brazil during which the ideas of this work were born.}
\setlist{noitemsep}
\newtheorem{lemma}{Lemma}[section]
\newtheorem{theorem}[lemma]{Theorem}
\newtheorem{theo}{Theorem}
\newtheorem{propo}[theo]{Proposition}
\newtheorem{corol}[theo]{Corollary}
\newtheorem{prop}[lemma]{Proposition}
\newtheorem{question}[lemma]{Question}
\newtheorem{conj}[lemma]{Conjecture}
\newtheorem{coro}[lemma]{Corollary}
\newtheorem{fact}[lemma]{Fact}
\theoremstyle{definition}
\newtheorem{definition}[lemma]{Definition}
\theoremstyle{remark}
\newtheorem{rem}[lemma]{Remark}
\newcommand{\F}{\mathcal{F}}
\newcommand{\Hy}{\mathbf{H}}
\newcommand{\N}{\mathbf{N}}
\newcommand{\R}{\mathbf{R}}
\newcommand{\T}{\mathbf{T}}
\newcommand{\G}{\mathcal{G}}
\newcommand{\Q}{\mathbf{Q}}
\newcommand{\Z}{\mathbf{Z}}
\newcommand{\Homeo}{\operatorname{Homeo}}
\newcommand{\supp}{\operatorname{supp}}
\newcommand{\rot}{\operatorname{rot}}
\newcommand{\rote}{\operatorname{rot}_{\mathrm{erg}}}
\newcommand{\rotm}{\operatorname{rot}_{\mathrm{mes}}}
\newcommand{\card}{\operatorname{Card}}
\newcommand{\conv}{\operatorname{conv}}
\newcommand{\diam}{\operatorname{diam}}
\newcommand{\inte}{\operatorname{int}}
\newcommand{\Id}{\operatorname{Id}}
\newcommand{\dd}{\,\mathrm{d}}
\newcommand{\cl}{\mathcal{N}}
\newcommand{\wt}{\widetilde}
\newcommand{\pr}{\operatorname{pr}}
\newcommand{\Tr}{\mathcal{T}}
\newcommand{\spn}{\operatorname{span}}
\newcommand{\extr}{\operatorname{extr}}
\newcommand{\M}{\mathcal{M}}
\newcommand{\Me}{\mathcal{M}^{\mathrm{erg}}}
\newcommand{\Merg}{\mathcal{M}^{\mathrm{erg}}_{\vartheta>0}}
\begin{document}

\maketitle

\begin{abstract}
This article consists in applications of \cite{G25Cvx1} in the case of homemomorphisms of higher genus surfaces whose homological rotation set is big enough --- a class of dynamics that is open. 

We first prove a structure theorem for the rotation set of such homeomorphisms: it is a finite union of convex sets, we get an optimal bound for the number of such pieces. This bound can be improved in the case of transitive (in this case the rotation set is convex) and non-wandering dynamics (and for such homeomorphisms we get the existence of a family of invariant essential open sets).

We also get boundedness of deviations for homeomorphisms with big rotation set and some consequences of it, including a answer to Boyland's conjecture in our framework.
\end{abstract}

\tableofcontents

\section{Introduction}

The goal of this work is to describe in deep the rotational behaviour of a class of homeomorphisms of closed surfaces $S$ of genus $g\ge 2$ we call \emph{with big rotation set}. This class is defined as the set of $f\in\Homeo_0(S)$ such that $\mathrm{int}(\conv(\rot(f))) \neq\emptyset$ (or, equivalently, whose rotation set spans the whole homology of the surface), where $\rot(f)$ is the homological rotation set of $f$ (defined thereafter). By Proposition~\ref{PropContinuityRot}, this class is open in $\Homeo_0(S)$. 
It should be seen as a case study in the general goal of understanding the rotational dynamics of any element of $\Homeo_0(S)$. Some subsets of $\Homeo_0(S)$ have already been studied in detail (see Figure~\ref{FigSpaceHomeo}): 
\begin{itemize}
\item The set of $f\in\Homeo_0(S)$ such that $\mathrm{int}(\rote(f)) \neq\emptyset$ --- that is contained in the set of homeomorphisms with big rotation set and is also open in $\Homeo_0(S)$ (Proposition~\ref{PropContinuityRot}) --- was studied in \cite{guiheneuf2023hyperbolic}. Such homeomorphisms can be seen as ``Anosov from the rotational viewpoint'', in particular they are pseudo-Anosov relative to an $f$-invariant finite set (see also \cite{zbMATH05634807}).
\item The set of area-preserving $f\in\Homeo_0(S)$ with 0 entropy is studied in detail in the article in preparation \cite{GlCPT}, where a classification \emph{\`a la} Franks-Handel \cite{frankshandel,lct1} holds. Note that by \cite{lellouch}, the homological rotation set of a zero entropy homeomorphism is contained in a totally isotropic subspace of $H_1(S,\R)$ (for the intersection form $\wedge$ defined thereafter), hence in a subspace of dimension $\ge g$.
\item The set of \emph{fitted Axiom A} diffeomorphisms of $S$ \cite{MR4578317}; this study allows the authors to prove that on an open and dense subset of $\Homeo_0(S)$, the rotation set is the union of at most $2^{5g-3}$ convex sets containing 0.
\end{itemize}
Note that the dynamics of points that are typical for some measure was studied in \cite{alepablo}.

\begin{figure}[ht]
\begin{center}

\tikzset{every picture/.style={line width=0.75pt}} 

\begin{tikzpicture}[x=0.75pt,y=0.75pt,yscale=-1,xscale=1]

\draw  [color={rgb, 255:red, 0; green, 0; blue, 0 }  ,draw opacity=1 ][fill={rgb, 255:red, 213; green, 223; blue, 0 }  ,fill opacity=0.1 ][dash pattern={on 0.84pt off 2.51pt}] (253.9,70.16) .. controls (281.9,50.16) and (319.99,44.95) .. (350.94,64.22) .. controls (381.9,83.49) and (397.23,109.27) .. (398.28,148.22) .. controls (354.19,164.83) and (270.56,131.94) .. (219.94,147.22) .. controls (219.23,116.83) and (225.9,90.16) .. (253.9,70.16) -- cycle ;
\draw  [draw opacity=0][fill={rgb, 255:red, 0; green, 85; blue, 186 }  ,fill opacity=0.08 ] (257.54,224.17) .. controls (291.04,242.42) and (329.29,237.92) .. (363.79,221.17) .. controls (388.04,207.67) and (396.79,181.42) .. (398.28,148.22) .. controls (354.19,164.83) and (270.56,131.94) .. (219.94,147.22) .. controls (223.79,191.17) and (241.29,214.17) .. (257.54,224.17) -- cycle ;
\draw  [color={rgb, 255:red, 0; green, 0; blue, 0 }  ,draw opacity=1 ][fill={rgb, 255:red, 198; green, 154; blue, 0 }  ,fill opacity=0.2 ][dash pattern={on 0.84pt off 2.51pt}] (305.53,51.9) .. controls (335.06,49.9) and (380.77,74.47) .. (391.91,113.32) .. controls (351.06,105.9) and (242.49,133.61) .. (227.34,103.32) .. controls (241.63,68.18) and (276.01,53.9) .. (305.53,51.9) -- cycle ;
\draw  [color={rgb, 255:red, 0; green, 0; blue, 0 }  ,draw opacity=1 ][fill={rgb, 255:red, 198; green, 83; blue, 0 }  ,fill opacity=0.25 ][dash pattern={on 0.84pt off 2.51pt}] (305.53,51.9) .. controls (323.06,51.04) and (355.34,60.18) .. (376.77,86.18) .. controls (333.63,95.04) and (283.63,80.46) .. (246.49,76.18) .. controls (265.63,59.03) and (281.34,53.9) .. (305.53,51.9) -- cycle ;
\draw [color={rgb, 255:red, 96; green, 176; blue, 2 }  ,draw opacity=1 ]   (305.53,51.9) .. controls (289.82,106.75) and (316.96,199.32) .. (306.1,235.6) ;
\draw  [line width=1.5]  (238.54,83.83) .. controls (262.54,55.83) and (312.83,35.5) .. (361.04,70.83) .. controls (409.25,106.17) and (404.54,179.33) .. (378.54,209.33) .. controls (352.54,239.33) and (269.04,249.33) .. (240.54,208.83) .. controls (212.04,168.33) and (214.54,111.83) .. (238.54,83.83) -- cycle ;
\draw  [draw opacity=0][fill={rgb, 255:red, 0; green, 14; blue, 191 }  ,fill opacity=1 ] (289.23,210.85) .. controls (289.23,210.31) and (289.66,209.88) .. (290.21,209.88) .. controls (290.75,209.88) and (291.18,210.31) .. (291.18,210.85) .. controls (291.18,211.39) and (290.75,211.83) .. (290.21,211.83) .. controls (289.66,211.83) and (289.23,211.39) .. (289.23,210.85) -- cycle ;
\draw  [draw opacity=0][fill={rgb, 255:red, 0; green, 14; blue, 191 }  ,fill opacity=1 ] (307.56,202.77) .. controls (307.56,202.23) and (308,201.79) .. (308.54,201.79) .. controls (309.08,201.79) and (309.52,202.23) .. (309.52,202.77) .. controls (309.52,203.31) and (309.08,203.75) .. (308.54,203.75) .. controls (308,203.75) and (307.56,203.31) .. (307.56,202.77) -- cycle ;
\draw  [draw opacity=0][fill={rgb, 255:red, 0; green, 14; blue, 191 }  ,fill opacity=1 ] (289.64,200.35) .. controls (289.64,199.81) and (290.08,199.38) .. (290.62,199.38) .. controls (291.16,199.38) and (291.6,199.81) .. (291.6,200.35) .. controls (291.6,200.89) and (291.16,201.33) .. (290.62,201.33) .. controls (290.08,201.33) and (289.64,200.89) .. (289.64,200.35) -- cycle ;
\draw  [draw opacity=0][fill={rgb, 255:red, 0; green, 14; blue, 191 }  ,fill opacity=1 ] (286.14,216.6) .. controls (286.14,216.06) and (286.58,215.63) .. (287.12,215.63) .. controls (287.66,215.63) and (288.1,216.06) .. (288.1,216.6) .. controls (288.1,217.14) and (287.66,217.58) .. (287.12,217.58) .. controls (286.58,217.58) and (286.14,217.14) .. (286.14,216.6) -- cycle ;
\draw  [draw opacity=0][fill={rgb, 255:red, 0; green, 14; blue, 191 }  ,fill opacity=1 ] (297.73,219.19) .. controls (297.73,218.65) and (298.16,218.21) .. (298.71,218.21) .. controls (299.25,218.21) and (299.68,218.65) .. (299.68,219.19) .. controls (299.68,219.73) and (299.25,220.17) .. (298.71,220.17) .. controls (298.16,220.17) and (297.73,219.73) .. (297.73,219.19) -- cycle ;
\draw  [draw opacity=0][fill={rgb, 255:red, 0; green, 14; blue, 191 }  ,fill opacity=1 ] (307.73,217.19) .. controls (307.73,216.65) and (308.16,216.21) .. (308.71,216.21) .. controls (309.25,216.21) and (309.68,216.65) .. (309.68,217.19) .. controls (309.68,217.73) and (309.25,218.17) .. (308.71,218.17) .. controls (308.16,218.17) and (307.73,217.73) .. (307.73,217.19) -- cycle ;
\draw  [draw opacity=0][fill={rgb, 255:red, 0; green, 14; blue, 191 }  ,fill opacity=1 ] (293.14,219.02) .. controls (293.14,218.48) and (293.58,218.04) .. (294.12,218.04) .. controls (294.66,218.04) and (295.1,218.48) .. (295.1,219.02) .. controls (295.1,219.56) and (294.66,220) .. (294.12,220) .. controls (293.58,220) and (293.14,219.56) .. (293.14,219.02) -- cycle ;
\draw  [draw opacity=0][fill={rgb, 255:red, 0; green, 14; blue, 191 }  ,fill opacity=1 ] (296.73,213.1) .. controls (296.73,212.56) and (297.16,212.13) .. (297.71,212.13) .. controls (298.25,212.13) and (298.68,212.56) .. (298.68,213.1) .. controls (298.68,213.64) and (298.25,214.08) .. (297.71,214.08) .. controls (297.16,214.08) and (296.73,213.64) .. (296.73,213.1) -- cycle ;
\draw  [draw opacity=0][fill={rgb, 255:red, 0; green, 14; blue, 191 }  ,fill opacity=1 ] (307.73,208.77) .. controls (307.73,208.23) and (308.16,207.79) .. (308.71,207.79) .. controls (309.25,207.79) and (309.68,208.23) .. (309.68,208.77) .. controls (309.68,209.31) and (309.25,209.75) .. (308.71,209.75) .. controls (308.16,209.75) and (307.73,209.31) .. (307.73,208.77) -- cycle ;
\draw  [draw opacity=0][fill={rgb, 255:red, 0; green, 14; blue, 191 }  ,fill opacity=1 ] (295.73,205.85) .. controls (295.73,205.31) and (296.16,204.88) .. (296.71,204.88) .. controls (297.25,204.88) and (297.68,205.31) .. (297.68,205.85) .. controls (297.68,206.39) and (297.25,206.83) .. (296.71,206.83) .. controls (296.16,206.83) and (295.73,206.39) .. (295.73,205.85) -- cycle ;
\draw  [draw opacity=0][fill={rgb, 255:red, 0; green, 14; blue, 191 }  ,fill opacity=1 ] (302.31,229.44) .. controls (302.31,228.9) and (302.75,228.46) .. (303.29,228.46) .. controls (303.83,228.46) and (304.27,228.9) .. (304.27,229.44) .. controls (304.27,229.98) and (303.83,230.42) .. (303.29,230.42) .. controls (302.75,230.42) and (302.31,229.98) .. (302.31,229.44) -- cycle ;
\draw  [draw opacity=0][fill={rgb, 255:red, 0; green, 14; blue, 191 }  ,fill opacity=1 ] (306.14,231.94) .. controls (306.14,231.4) and (306.58,230.96) .. (307.12,230.96) .. controls (307.66,230.96) and (308.1,231.4) .. (308.1,231.94) .. controls (308.1,232.48) and (307.66,232.92) .. (307.12,232.92) .. controls (306.58,232.92) and (306.14,232.48) .. (306.14,231.94) -- cycle ;
\draw  [draw opacity=0][fill={rgb, 255:red, 0; green, 14; blue, 191 }  ,fill opacity=1 ] (319.23,229.52) .. controls (319.23,228.98) and (319.66,228.54) .. (320.21,228.54) .. controls (320.75,228.54) and (321.18,228.98) .. (321.18,229.52) .. controls (321.18,230.06) and (320.75,230.5) .. (320.21,230.5) .. controls (319.66,230.5) and (319.23,230.06) .. (319.23,229.52) -- cycle ;
\draw  [draw opacity=0][fill={rgb, 255:red, 0; green, 14; blue, 191 }  ,fill opacity=1 ] (300.23,224.85) .. controls (300.23,224.31) and (300.66,223.88) .. (301.21,223.88) .. controls (301.75,223.88) and (302.18,224.31) .. (302.18,224.85) .. controls (302.18,225.39) and (301.75,225.83) .. (301.21,225.83) .. controls (300.66,225.83) and (300.23,225.39) .. (300.23,224.85) -- cycle ;
\draw  [draw opacity=0][fill={rgb, 255:red, 0; green, 14; blue, 191 }  ,fill opacity=1 ] (297.56,231.02) .. controls (297.56,230.48) and (298,230.04) .. (298.54,230.04) .. controls (299.08,230.04) and (299.52,230.48) .. (299.52,231.02) .. controls (299.52,231.56) and (299.08,232) .. (298.54,232) .. controls (298,232) and (297.56,231.56) .. (297.56,231.02) -- cycle ;
\draw  [draw opacity=0][fill={rgb, 255:red, 0; green, 14; blue, 191 }  ,fill opacity=1 ] (301.31,209.35) .. controls (301.31,208.81) and (301.75,208.38) .. (302.29,208.38) .. controls (302.83,208.38) and (303.27,208.81) .. (303.27,209.35) .. controls (303.27,209.89) and (302.83,210.33) .. (302.29,210.33) .. controls (301.75,210.33) and (301.31,209.89) .. (301.31,209.35) -- cycle ;
\draw  [draw opacity=0][fill={rgb, 255:red, 0; green, 14; blue, 191 }  ,fill opacity=1 ] (317.39,213.44) .. controls (317.39,212.9) and (317.83,212.46) .. (318.37,212.46) .. controls (318.91,212.46) and (319.35,212.9) .. (319.35,213.44) .. controls (319.35,213.98) and (318.91,214.42) .. (318.37,214.42) .. controls (317.83,214.42) and (317.39,213.98) .. (317.39,213.44) -- cycle ;
\draw  [draw opacity=0][fill={rgb, 255:red, 0; green, 14; blue, 191 }  ,fill opacity=1 ] (322.81,214.94) .. controls (322.81,214.4) and (323.25,213.96) .. (323.79,213.96) .. controls (324.33,213.96) and (324.77,214.4) .. (324.77,214.94) .. controls (324.77,215.48) and (324.33,215.92) .. (323.79,215.92) .. controls (323.25,215.92) and (322.81,215.48) .. (322.81,214.94) -- cycle ;
\draw  [draw opacity=0][fill={rgb, 255:red, 0; green, 14; blue, 191 }  ,fill opacity=1 ] (290.56,215.1) .. controls (290.56,214.56) and (291,214.13) .. (291.54,214.13) .. controls (292.08,214.13) and (292.52,214.56) .. (292.52,215.1) .. controls (292.52,215.64) and (292.08,216.08) .. (291.54,216.08) .. controls (291,216.08) and (290.56,215.64) .. (290.56,215.1) -- cycle ;
\draw  [draw opacity=0][fill={rgb, 255:red, 0; green, 14; blue, 191 }  ,fill opacity=1 ] (307.64,211.6) .. controls (307.64,211.06) and (308.08,210.63) .. (308.62,210.63) .. controls (309.16,210.63) and (309.6,211.06) .. (309.6,211.6) .. controls (309.6,212.14) and (309.16,212.58) .. (308.62,212.58) .. controls (308.08,212.58) and (307.64,212.14) .. (307.64,211.6) -- cycle ;
\draw  [draw opacity=0][fill={rgb, 255:red, 0; green, 14; blue, 191 }  ,fill opacity=1 ] (293.64,225.52) .. controls (293.64,224.98) and (294.08,224.54) .. (294.62,224.54) .. controls (295.16,224.54) and (295.6,224.98) .. (295.6,225.52) .. controls (295.6,226.06) and (295.16,226.5) .. (294.62,226.5) .. controls (294.08,226.5) and (293.64,226.06) .. (293.64,225.52) -- cycle ;
\draw  [draw opacity=0][fill={rgb, 255:red, 0; green, 14; blue, 191 }  ,fill opacity=1 ] (289.14,225.35) .. controls (289.14,224.81) and (289.58,224.38) .. (290.12,224.38) .. controls (290.66,224.38) and (291.1,224.81) .. (291.1,225.35) .. controls (291.1,225.89) and (290.66,226.33) .. (290.12,226.33) .. controls (289.58,226.33) and (289.14,225.89) .. (289.14,225.35) -- cycle ;
\draw  [draw opacity=0][fill={rgb, 255:red, 0; green, 14; blue, 191 }  ,fill opacity=1 ] (314.89,224.77) .. controls (314.89,224.23) and (315.33,223.79) .. (315.87,223.79) .. controls (316.41,223.79) and (316.85,224.23) .. (316.85,224.77) .. controls (316.85,225.31) and (316.41,225.75) .. (315.87,225.75) .. controls (315.33,225.75) and (314.89,225.31) .. (314.89,224.77) -- cycle ;
\draw  [draw opacity=0][fill={rgb, 255:red, 0; green, 14; blue, 191 }  ,fill opacity=1 ] (314.23,218.69) .. controls (314.23,218.15) and (314.66,217.71) .. (315.21,217.71) .. controls (315.75,217.71) and (316.18,218.15) .. (316.18,218.69) .. controls (316.18,219.23) and (315.75,219.67) .. (315.21,219.67) .. controls (314.66,219.67) and (314.23,219.23) .. (314.23,218.69) -- cycle ;
\draw  [draw opacity=0][fill={rgb, 255:red, 0; green, 14; blue, 191 }  ,fill opacity=1 ] (310.89,230.02) .. controls (310.89,229.48) and (311.33,229.04) .. (311.87,229.04) .. controls (312.41,229.04) and (312.85,229.48) .. (312.85,230.02) .. controls (312.85,230.56) and (312.41,231) .. (311.87,231) .. controls (311.33,231) and (310.89,230.56) .. (310.89,230.02) -- cycle ;
\draw  [draw opacity=0][fill={rgb, 255:red, 0; green, 14; blue, 191 }  ,fill opacity=1 ] (320.06,224.19) .. controls (320.06,223.65) and (320.5,223.21) .. (321.04,223.21) .. controls (321.58,223.21) and (322.02,223.65) .. (322.02,224.19) .. controls (322.02,224.73) and (321.58,225.17) .. (321.04,225.17) .. controls (320.5,225.17) and (320.06,224.73) .. (320.06,224.19) -- cycle ;
\draw  [draw opacity=0][fill={rgb, 255:red, 0; green, 14; blue, 191 }  ,fill opacity=1 ] (317.81,206.1) .. controls (317.81,205.56) and (318.25,205.13) .. (318.79,205.13) .. controls (319.33,205.13) and (319.77,205.56) .. (319.77,206.1) .. controls (319.77,206.64) and (319.33,207.08) .. (318.79,207.08) .. controls (318.25,207.08) and (317.81,206.64) .. (317.81,206.1) -- cycle ;
\draw  [draw opacity=0][fill={rgb, 255:red, 0; green, 14; blue, 191 }  ,fill opacity=1 ] (326.31,203.94) .. controls (326.31,203.4) and (326.75,202.96) .. (327.29,202.96) .. controls (327.83,202.96) and (328.27,203.4) .. (328.27,203.94) .. controls (328.27,204.48) and (327.83,204.92) .. (327.29,204.92) .. controls (326.75,204.92) and (326.31,204.48) .. (326.31,203.94) -- cycle ;
\draw  [draw opacity=0][fill={rgb, 255:red, 0; green, 14; blue, 191 }  ,fill opacity=1 ] (306.89,192.6) .. controls (306.89,192.06) and (307.33,191.63) .. (307.87,191.63) .. controls (308.41,191.63) and (308.85,192.06) .. (308.85,192.6) .. controls (308.85,193.14) and (308.41,193.58) .. (307.87,193.58) .. controls (307.33,193.58) and (306.89,193.14) .. (306.89,192.6) -- cycle ;
\draw  [draw opacity=0][fill={rgb, 255:red, 0; green, 14; blue, 191 }  ,fill opacity=1 ] (299.14,191.52) .. controls (299.14,190.98) and (299.58,190.54) .. (300.12,190.54) .. controls (300.66,190.54) and (301.1,190.98) .. (301.1,191.52) .. controls (301.1,192.06) and (300.66,192.5) .. (300.12,192.5) .. controls (299.58,192.5) and (299.14,192.06) .. (299.14,191.52) -- cycle ;
\draw  [draw opacity=0][fill={rgb, 255:red, 0; green, 14; blue, 191 }  ,fill opacity=1 ] (279.81,223.02) .. controls (279.81,222.48) and (280.25,222.04) .. (280.79,222.04) .. controls (281.33,222.04) and (281.77,222.48) .. (281.77,223.02) .. controls (281.77,223.56) and (281.33,224) .. (280.79,224) .. controls (280.25,224) and (279.81,223.56) .. (279.81,223.02) -- cycle ;
\draw  [draw opacity=0][fill={rgb, 255:red, 0; green, 14; blue, 191 }  ,fill opacity=1 ] (302.39,217.69) .. controls (302.39,217.15) and (302.83,216.71) .. (303.37,216.71) .. controls (303.91,216.71) and (304.35,217.15) .. (304.35,217.69) .. controls (304.35,218.23) and (303.91,218.67) .. (303.37,218.67) .. controls (302.83,218.67) and (302.39,218.23) .. (302.39,217.69) -- cycle ;
\draw  [draw opacity=0][fill={rgb, 255:red, 0; green, 14; blue, 191 }  ,fill opacity=1 ] (303.98,223.77) .. controls (303.98,223.23) and (304.41,222.79) .. (304.96,222.79) .. controls (305.5,222.79) and (305.93,223.23) .. (305.93,223.77) .. controls (305.93,224.31) and (305.5,224.75) .. (304.96,224.75) .. controls (304.41,224.75) and (303.98,224.31) .. (303.98,223.77) -- cycle ;
\draw  [draw opacity=0][fill={rgb, 255:red, 0; green, 14; blue, 191 }  ,fill opacity=1 ] (284.81,230.19) .. controls (284.81,229.65) and (285.25,229.21) .. (285.79,229.21) .. controls (286.33,229.21) and (286.77,229.65) .. (286.77,230.19) .. controls (286.77,230.73) and (286.33,231.17) .. (285.79,231.17) .. controls (285.25,231.17) and (284.81,230.73) .. (284.81,230.19) -- cycle ;
\draw  [draw opacity=0][fill={rgb, 255:red, 0; green, 14; blue, 191 }  ,fill opacity=1 ] (315.98,232.19) .. controls (315.98,231.65) and (316.41,231.21) .. (316.96,231.21) .. controls (317.5,231.21) and (317.93,231.65) .. (317.93,232.19) .. controls (317.93,232.73) and (317.5,233.17) .. (316.96,233.17) .. controls (316.41,233.17) and (315.98,232.73) .. (315.98,232.19) -- cycle ;
\draw  [draw opacity=0][fill={rgb, 255:red, 0; green, 14; blue, 191 }  ,fill opacity=1 ] (309.78,221.9) .. controls (309.78,221.36) and (310.21,220.93) .. (310.76,220.93) .. controls (311.3,220.93) and (311.73,221.36) .. (311.73,221.9) .. controls (311.73,222.44) and (311.3,222.88) .. (310.76,222.88) .. controls (310.21,222.88) and (309.78,222.44) .. (309.78,221.9) -- cycle ;
\draw  [draw opacity=0][fill={rgb, 255:red, 0; green, 14; blue, 191 }  ,fill opacity=1 ] (304.81,226.77) .. controls (304.81,226.23) and (305.25,225.79) .. (305.79,225.79) .. controls (306.33,225.79) and (306.77,226.23) .. (306.77,226.77) .. controls (306.77,227.31) and (306.33,227.75) .. (305.79,227.75) .. controls (305.25,227.75) and (304.81,227.31) .. (304.81,226.77) -- cycle ;
\draw  [draw opacity=0][fill={rgb, 255:red, 0; green, 14; blue, 191 }  ,fill opacity=1 ] (304.56,221.6) .. controls (304.56,221.06) and (305,220.63) .. (305.54,220.63) .. controls (306.08,220.63) and (306.52,221.06) .. (306.52,221.6) .. controls (306.52,222.14) and (306.08,222.58) .. (305.54,222.58) .. controls (305,222.58) and (304.56,222.14) .. (304.56,221.6) -- cycle ;
\draw  [draw opacity=0][fill={rgb, 255:red, 0; green, 14; blue, 191 }  ,fill opacity=1 ] (301.06,220.69) .. controls (301.06,220.15) and (301.5,219.71) .. (302.04,219.71) .. controls (302.58,219.71) and (303.02,220.15) .. (303.02,220.69) .. controls (303.02,221.23) and (302.58,221.67) .. (302.04,221.67) .. controls (301.5,221.67) and (301.06,221.23) .. (301.06,220.69) -- cycle ;
\draw  [draw opacity=0][fill={rgb, 255:red, 0; green, 14; blue, 191 }  ,fill opacity=1 ] (312.14,220.19) .. controls (312.14,219.65) and (312.58,219.21) .. (313.12,219.21) .. controls (313.66,219.21) and (314.1,219.65) .. (314.1,220.19) .. controls (314.1,220.73) and (313.66,221.17) .. (313.12,221.17) .. controls (312.58,221.17) and (312.14,220.73) .. (312.14,220.19) -- cycle ;
\draw  [draw opacity=0][fill={rgb, 255:red, 0; green, 14; blue, 191 }  ,fill opacity=1 ] (313.73,223.35) .. controls (313.73,222.81) and (314.16,222.38) .. (314.71,222.38) .. controls (315.25,222.38) and (315.68,222.81) .. (315.68,223.35) .. controls (315.68,223.89) and (315.25,224.33) .. (314.71,224.33) .. controls (314.16,224.33) and (313.73,223.89) .. (313.73,223.35) -- cycle ;
\draw  [draw opacity=0][fill={rgb, 255:red, 0; green, 14; blue, 191 }  ,fill opacity=1 ] (301.89,227.02) .. controls (301.89,226.48) and (302.33,226.04) .. (302.87,226.04) .. controls (303.41,226.04) and (303.85,226.48) .. (303.85,227.02) .. controls (303.85,227.56) and (303.41,228) .. (302.87,228) .. controls (302.33,228) and (301.89,227.56) .. (301.89,227.02) -- cycle ;
\draw  [draw opacity=0][fill={rgb, 255:red, 0; green, 14; blue, 191 }  ,fill opacity=1 ] (313.81,227.27) .. controls (313.81,226.73) and (314.25,226.29) .. (314.79,226.29) .. controls (315.33,226.29) and (315.77,226.73) .. (315.77,227.27) .. controls (315.77,227.81) and (315.33,228.25) .. (314.79,228.25) .. controls (314.25,228.25) and (313.81,227.81) .. (313.81,227.27) -- cycle ;
\draw  [draw opacity=0][fill={rgb, 255:red, 0; green, 14; blue, 191 }  ,fill opacity=1 ] (312.64,205.85) .. controls (312.64,205.31) and (313.08,204.88) .. (313.62,204.88) .. controls (314.16,204.88) and (314.6,205.31) .. (314.6,205.85) .. controls (314.6,206.39) and (314.16,206.83) .. (313.62,206.83) .. controls (313.08,206.83) and (312.64,206.39) .. (312.64,205.85) -- cycle ;
\draw  [draw opacity=0][fill={rgb, 255:red, 0; green, 14; blue, 191 }  ,fill opacity=1 ] (312.64,214.27) .. controls (312.64,213.73) and (313.08,213.29) .. (313.62,213.29) .. controls (314.16,213.29) and (314.6,213.73) .. (314.6,214.27) .. controls (314.6,214.81) and (314.16,215.25) .. (313.62,215.25) .. controls (313.08,215.25) and (312.64,214.81) .. (312.64,214.27) -- cycle ;
\draw  [draw opacity=0][fill={rgb, 255:red, 0; green, 14; blue, 191 }  ,fill opacity=1 ] (309.01,226.62) .. controls (309.01,226.08) and (309.45,225.64) .. (309.99,225.64) .. controls (310.53,225.64) and (310.97,226.08) .. (310.97,226.62) .. controls (310.97,227.16) and (310.53,227.6) .. (309.99,227.6) .. controls (309.45,227.6) and (309.01,227.16) .. (309.01,226.62) -- cycle ;
\draw  [draw opacity=0][fill={rgb, 255:red, 0; green, 14; blue, 191 }  ,fill opacity=1 ] (295.41,160.52) .. controls (295.41,159.98) and (295.85,159.54) .. (296.39,159.54) .. controls (296.93,159.54) and (297.37,159.98) .. (297.37,160.52) .. controls (297.37,161.06) and (296.93,161.5) .. (296.39,161.5) .. controls (295.85,161.5) and (295.41,161.06) .. (295.41,160.52) -- cycle ;
\draw  [draw opacity=0][fill={rgb, 255:red, 0; green, 14; blue, 191 }  ,fill opacity=1 ] (309.68,164.65) .. controls (309.68,164.11) and (310.11,163.68) .. (310.66,163.68) .. controls (311.2,163.68) and (311.63,164.11) .. (311.63,164.65) .. controls (311.63,165.19) and (311.2,165.63) .. (310.66,165.63) .. controls (310.11,165.63) and (309.68,165.19) .. (309.68,164.65) -- cycle ;
\draw  [draw opacity=0][fill={rgb, 255:red, 0; green, 14; blue, 191 }  ,fill opacity=1 ] (311.41,182.12) .. controls (311.41,181.58) and (311.85,181.14) .. (312.39,181.14) .. controls (312.93,181.14) and (313.37,181.58) .. (313.37,182.12) .. controls (313.37,182.66) and (312.93,183.1) .. (312.39,183.1) .. controls (311.85,183.1) and (311.41,182.66) .. (311.41,182.12) -- cycle ;
\draw  [draw opacity=0][fill={rgb, 255:red, 0; green, 14; blue, 191 }  ,fill opacity=1 ] (297.94,199.05) .. controls (297.94,198.51) and (298.38,198.08) .. (298.92,198.08) .. controls (299.46,198.08) and (299.9,198.51) .. (299.9,199.05) .. controls (299.9,199.59) and (299.46,200.03) .. (298.92,200.03) .. controls (298.38,200.03) and (297.94,199.59) .. (297.94,199.05) -- cycle ;
\draw  [draw opacity=0][fill={rgb, 255:red, 0; green, 14; blue, 191 }  ,fill opacity=1 ] (315.14,190.92) .. controls (315.14,190.38) and (315.58,189.94) .. (316.12,189.94) .. controls (316.66,189.94) and (317.1,190.38) .. (317.1,190.92) .. controls (317.1,191.46) and (316.66,191.9) .. (316.12,191.9) .. controls (315.58,191.9) and (315.14,191.46) .. (315.14,190.92) -- cycle ;
\draw  [draw opacity=0][fill={rgb, 255:red, 0; green, 14; blue, 191 }  ,fill opacity=1 ] (306.34,181.72) .. controls (306.34,181.18) and (306.78,180.74) .. (307.32,180.74) .. controls (307.86,180.74) and (308.3,181.18) .. (308.3,181.72) .. controls (308.3,182.26) and (307.86,182.7) .. (307.32,182.7) .. controls (306.78,182.7) and (306.34,182.26) .. (306.34,181.72) -- cycle ;
\draw  [draw opacity=0][fill={rgb, 255:red, 0; green, 14; blue, 191 }  ,fill opacity=1 ] (314.08,197.72) .. controls (314.08,197.18) and (314.51,196.74) .. (315.06,196.74) .. controls (315.6,196.74) and (316.03,197.18) .. (316.03,197.72) .. controls (316.03,198.26) and (315.6,198.7) .. (315.06,198.7) .. controls (314.51,198.7) and (314.08,198.26) .. (314.08,197.72) -- cycle ;
\draw  [draw opacity=0][fill={rgb, 255:red, 0; green, 14; blue, 191 }  ,fill opacity=1 ] (304.21,162.92) .. controls (304.21,162.38) and (304.65,161.94) .. (305.19,161.94) .. controls (305.73,161.94) and (306.17,162.38) .. (306.17,162.92) .. controls (306.17,163.46) and (305.73,163.9) .. (305.19,163.9) .. controls (304.65,163.9) and (304.21,163.46) .. (304.21,162.92) -- cycle ;
\draw  [draw opacity=0][fill={rgb, 255:red, 0; green, 14; blue, 191 }  ,fill opacity=1 ] (287.28,186.25) .. controls (287.28,185.71) and (287.71,185.28) .. (288.26,185.28) .. controls (288.8,185.28) and (289.23,185.71) .. (289.23,186.25) .. controls (289.23,186.79) and (288.8,187.23) .. (288.26,187.23) .. controls (287.71,187.23) and (287.28,186.79) .. (287.28,186.25) -- cycle ;
\draw  [draw opacity=0][fill={rgb, 255:red, 0; green, 14; blue, 191 }  ,fill opacity=1 ] (301.06,222.65) .. controls (301.06,222.11) and (301.5,221.67) .. (302.04,221.67) .. controls (302.58,221.67) and (303.02,222.11) .. (303.02,222.65) .. controls (303.02,223.19) and (302.58,223.62) .. (302.04,223.62) .. controls (301.5,223.62) and (301.06,223.19) .. (301.06,222.65) -- cycle ;
\draw [color={rgb, 255:red, 0; green, 85; blue, 186 }  ,draw opacity=1 ]   (213.18,205.79) .. controls (229.88,219.8) and (243.73,215.49) .. (255.72,202.03) ;
\draw [shift={(257.6,199.83)}, rotate = 129.34] [fill={rgb, 255:red, 0; green, 85; blue, 186 }  ,fill opacity=1 ][line width=0.08]  [draw opacity=0] (8.04,-3.86) -- (0,0) -- (8.04,3.86) -- (5.34,0) -- cycle    ;
\draw  [draw opacity=0][fill={rgb, 255:red, 0; green, 14; blue, 191 }  ,fill opacity=1 ] (331.41,194.85) .. controls (331.41,194.31) and (331.85,193.88) .. (332.39,193.88) .. controls (332.93,193.88) and (333.37,194.31) .. (333.37,194.85) .. controls (333.37,195.39) and (332.93,195.83) .. (332.39,195.83) .. controls (331.85,195.83) and (331.41,195.39) .. (331.41,194.85) -- cycle ;
\draw  [draw opacity=0][fill={rgb, 255:red, 0; green, 14; blue, 191 }  ,fill opacity=1 ] (352.12,166.57) .. controls (352.12,166.03) and (352.56,165.59) .. (353.1,165.59) .. controls (353.64,165.59) and (354.08,166.03) .. (354.08,166.57) .. controls (354.08,167.11) and (353.64,167.55) .. (353.1,167.55) .. controls (352.56,167.55) and (352.12,167.11) .. (352.12,166.57) -- cycle ;
\draw  [draw opacity=0][fill={rgb, 255:red, 0; green, 14; blue, 191 }  ,fill opacity=1 ] (347.64,218.86) .. controls (347.64,218.32) and (348.08,217.88) .. (348.62,217.88) .. controls (349.16,217.88) and (349.6,218.32) .. (349.6,218.86) .. controls (349.6,219.4) and (349.16,219.84) .. (348.62,219.84) .. controls (348.08,219.84) and (347.64,219.4) .. (347.64,218.86) -- cycle ;
\draw  [draw opacity=0][fill={rgb, 255:red, 0; green, 14; blue, 191 }  ,fill opacity=1 ] (347.76,197.69) .. controls (347.76,197.15) and (348.2,196.71) .. (348.74,196.71) .. controls (349.28,196.71) and (349.72,197.15) .. (349.72,197.69) .. controls (349.72,198.23) and (349.28,198.67) .. (348.74,198.67) .. controls (348.2,198.67) and (347.76,198.23) .. (347.76,197.69) -- cycle ;
\draw  [draw opacity=0][fill={rgb, 255:red, 0; green, 14; blue, 191 }  ,fill opacity=1 ] (356,197.1) .. controls (356,196.56) and (356.44,196.12) .. (356.98,196.12) .. controls (357.52,196.12) and (357.96,196.56) .. (357.96,197.1) .. controls (357.96,197.64) and (357.52,198.08) .. (356.98,198.08) .. controls (356.44,198.08) and (356,197.64) .. (356,197.1) -- cycle ;
\draw  [draw opacity=0][fill={rgb, 255:red, 0; green, 14; blue, 191 }  ,fill opacity=1 ] (322.94,182.75) .. controls (322.94,182.21) and (323.38,181.77) .. (323.92,181.77) .. controls (324.46,181.77) and (324.9,182.21) .. (324.9,182.75) .. controls (324.9,183.29) and (324.46,183.73) .. (323.92,183.73) .. controls (323.38,183.73) and (322.94,183.29) .. (322.94,182.75) -- cycle ;
\draw  [draw opacity=0][fill={rgb, 255:red, 0; green, 14; blue, 191 }  ,fill opacity=1 ] (372.23,167.57) .. controls (372.23,167.03) and (372.67,166.59) .. (373.21,166.59) .. controls (373.75,166.59) and (374.19,167.03) .. (374.19,167.57) .. controls (374.19,168.11) and (373.75,168.55) .. (373.21,168.55) .. controls (372.67,168.55) and (372.23,168.11) .. (372.23,167.57) -- cycle ;
\draw  [draw opacity=0][fill={rgb, 255:red, 0; green, 14; blue, 191 }  ,fill opacity=1 ] (329.64,172.16) .. controls (329.64,171.62) and (330.08,171.18) .. (330.62,171.18) .. controls (331.16,171.18) and (331.6,171.62) .. (331.6,172.16) .. controls (331.6,172.7) and (331.16,173.14) .. (330.62,173.14) .. controls (330.08,173.14) and (329.64,172.7) .. (329.64,172.16) -- cycle ;
\draw  [draw opacity=0][fill={rgb, 255:red, 0; green, 14; blue, 191 }  ,fill opacity=1 ] (336,219.65) .. controls (336,219.11) and (336.44,218.67) .. (336.98,218.67) .. controls (337.52,218.67) and (337.96,219.11) .. (337.96,219.65) .. controls (337.96,220.19) and (337.52,220.63) .. (336.98,220.63) .. controls (336.44,220.63) and (336,220.19) .. (336,219.65) -- cycle ;
\draw  [draw opacity=0][fill={rgb, 255:red, 0; green, 14; blue, 191 }  ,fill opacity=1 ] (359.76,209.53) .. controls (359.76,208.99) and (360.2,208.55) .. (360.74,208.55) .. controls (361.28,208.55) and (361.72,208.99) .. (361.72,209.53) .. controls (361.72,210.07) and (361.28,210.51) .. (360.74,210.51) .. controls (360.2,210.51) and (359.76,210.07) .. (359.76,209.53) -- cycle ;
\draw  [draw opacity=0][fill={rgb, 255:red, 0; green, 14; blue, 191 }  ,fill opacity=1 ] (298.68,203.65) .. controls (298.68,203.11) and (299.12,202.67) .. (299.66,202.67) .. controls (300.2,202.67) and (300.64,203.11) .. (300.64,203.65) .. controls (300.64,204.19) and (300.2,204.63) .. (299.66,204.63) .. controls (299.12,204.63) and (298.68,204.19) .. (298.68,203.65) -- cycle ;
\draw  [draw opacity=0][fill={rgb, 255:red, 0; green, 14; blue, 191 }  ,fill opacity=1 ] (235.62,168.24) .. controls (235.62,167.7) and (236.06,167.26) .. (236.6,167.26) .. controls (237.14,167.26) and (237.58,167.7) .. (237.58,168.24) .. controls (237.58,168.78) and (237.14,169.22) .. (236.6,169.22) .. controls (236.06,169.22) and (235.62,168.78) .. (235.62,168.24) -- cycle ;
\draw  [draw opacity=0][fill={rgb, 255:red, 0; green, 14; blue, 191 }  ,fill opacity=1 ] (257.27,157.65) .. controls (257.27,157.11) and (257.71,156.67) .. (258.25,156.67) .. controls (258.79,156.67) and (259.23,157.11) .. (259.23,157.65) .. controls (259.23,158.19) and (258.79,158.63) .. (258.25,158.63) .. controls (257.71,158.63) and (257.27,158.19) .. (257.27,157.65) -- cycle ;
\draw  [draw opacity=0][fill={rgb, 255:red, 0; green, 14; blue, 191 }  ,fill opacity=1 ] (269.62,205.06) .. controls (269.62,204.52) and (270.06,204.08) .. (270.6,204.08) .. controls (271.14,204.08) and (271.58,204.52) .. (271.58,205.06) .. controls (271.58,205.6) and (271.14,206.04) .. (270.6,206.04) .. controls (270.06,206.04) and (269.62,205.6) .. (269.62,205.06) -- cycle ;
\draw  [draw opacity=0][fill={rgb, 255:red, 0; green, 14; blue, 191 }  ,fill opacity=1 ] (241.5,198) .. controls (241.5,197.46) and (241.94,197.02) .. (242.48,197.02) .. controls (243.02,197.02) and (243.46,197.46) .. (243.46,198) .. controls (243.46,198.54) and (243.02,198.98) .. (242.48,198.98) .. controls (241.94,198.98) and (241.5,198.54) .. (241.5,198) -- cycle ;
\draw  [draw opacity=0][fill={rgb, 255:red, 0; green, 14; blue, 191 }  ,fill opacity=1 ] (261.62,217.53) .. controls (261.62,216.99) and (262.06,216.55) .. (262.6,216.55) .. controls (263.14,216.55) and (263.58,216.99) .. (263.58,217.53) .. controls (263.58,218.07) and (263.14,218.51) .. (262.6,218.51) .. controls (262.06,218.51) and (261.62,218.07) .. (261.62,217.53) -- cycle ;
\draw  [draw opacity=0][fill={rgb, 255:red, 0; green, 14; blue, 191 }  ,fill opacity=1 ] (266.33,190.24) .. controls (266.33,189.7) and (266.76,189.26) .. (267.31,189.26) .. controls (267.85,189.26) and (268.28,189.7) .. (268.28,190.24) .. controls (268.28,190.78) and (267.85,191.22) .. (267.31,191.22) .. controls (266.76,191.22) and (266.33,190.78) .. (266.33,190.24) -- cycle ;
\draw  [draw opacity=0][fill={rgb, 255:red, 0; green, 14; blue, 191 }  ,fill opacity=1 ] (242.8,188.24) .. controls (242.8,187.7) and (243.23,187.26) .. (243.78,187.26) .. controls (244.32,187.26) and (244.75,187.7) .. (244.75,188.24) .. controls (244.75,188.78) and (244.32,189.22) .. (243.78,189.22) .. controls (243.23,189.22) and (242.8,188.78) .. (242.8,188.24) -- cycle ;
\draw  [draw opacity=0][fill={rgb, 255:red, 0; green, 14; blue, 191 }  ,fill opacity=1 ] (232.91,153.89) .. controls (232.91,153.34) and (233.35,152.91) .. (233.89,152.91) .. controls (234.43,152.91) and (234.87,153.34) .. (234.87,153.89) .. controls (234.87,154.43) and (234.43,154.86) .. (233.89,154.86) .. controls (233.35,154.86) and (232.91,154.43) .. (232.91,153.89) -- cycle ;
\draw [color={rgb, 255:red, 154; green, 161; blue, 1 }  ,draw opacity=1 ]   (421.36,126.44) .. controls (408.06,121.38) and (398.07,120.52) .. (384.73,131.36) ;
\draw [shift={(382.6,133.17)}, rotate = 318.59] [fill={rgb, 255:red, 154; green, 161; blue, 1 }  ,fill opacity=1 ][line width=0.08]  [draw opacity=0] (8.04,-3.86) -- (0,0) -- (8.04,3.86) -- (5.34,0) -- cycle    ;
\draw [color={rgb, 255:red, 160; green, 125; blue, 0 }  ,draw opacity=1 ]   (221.36,85.13) .. controls (234.27,76) and (248.97,78.77) .. (259.76,91.23) ;
\draw [shift={(261.6,93.5)}, rotate = 232.78] [fill={rgb, 255:red, 160; green, 125; blue, 0 }  ,fill opacity=1 ][line width=0.08]  [draw opacity=0] (8.04,-3.86) -- (0,0) -- (8.04,3.86) -- (5.34,0) -- cycle    ;
\draw [color={rgb, 255:red, 161; green, 68; blue, 0 }  ,draw opacity=1 ]   (391.03,72.44) .. controls (377.73,67.38) and (367.74,66.52) .. (354.4,77.36) ;
\draw [shift={(352.27,79.17)}, rotate = 318.59] [fill={rgb, 255:red, 161; green, 68; blue, 0 }  ,fill opacity=1 ][line width=0.08]  [draw opacity=0] (8.04,-3.86) -- (0,0) -- (8.04,3.86) -- (5.34,0) -- cycle    ;

\draw (300.71,131.54) node [anchor=east] [inner sep=0.75pt]  [font=\small,color={rgb, 255:red, 96; green, 176; blue, 2 }  ,opacity=1 ,xscale=1.2,yscale=1.2]  {$f_{*} \lambda =\lambda $};
\draw (352.98,179.95) node  [font=\small,color={rgb, 255:red, 0; green, 14; blue, 191 }  ,opacity=1 ,xscale=1.2,yscale=1.2]  {$h_{top} =0$};
\draw (218.22,203) node [anchor=south east] [inner sep=0.75pt]  [font=\footnotesize,color={rgb, 255:red, 0; green, 85; blue, 186 }  ,opacity=1 ,xscale=1.2,yscale=1.2]  {$\mathrm{dim}( \mathrm{span}(\rot)) \leq g$};
\draw (409.07,136.63) node [anchor=west] [inner sep=0.75pt]  [font=\footnotesize,color={rgb, 255:red, 154; green, 161; blue, 1 }  ,opacity=1 ,xscale=1.2,yscale=1.2]  {$\mathrm{dim}( \mathrm{span}(\rot)) > g$};
\draw (388,205) node [anchor=west] [inner sep=0.75pt]  [font=\footnotesize,color=black ,opacity=1 ,xscale=1.2,yscale=1.2]  {$\Homeo_0(S)$};
\draw (224.85,99) node [anchor=east] [inner sep=0.75pt]  [font=\footnotesize,color={rgb, 255:red, 160; green, 125; blue, 0 }  ,opacity=1 ,xscale=1.2,yscale=1.2]  {\underline{$\mathrm{int}(\conv(\rote)) \neq\emptyset$}};
\draw (393.03,72.44) node [anchor=west] [inner sep=0.75pt]  [font=\footnotesize,color={rgb, 255:red, 161; green, 68; blue, 0 }  ,opacity=1 ,xscale=1.2,yscale=1.2]  {$\mathrm{int}(\rote) \neq\emptyset$};

\end{tikzpicture}
\caption{The space $\Homeo_0(S)$ from a rotational viewpoint. Here $\lambda$ is Lebesgue measure. We have the inclusions $\{h_{top} = 0\}\subset \{\mathrm{dim}( \mathrm{span}(\rot)) \leq g\}$ and $\{\mathrm{int}(\rote) \neq\emptyset\}\subset \{\mathrm{int}(\conv(\rote)) \neq\emptyset\}\subset \{\mathrm{dim}( \mathrm{span}(\rot)) > g\}$. The dots represent the set $\{h_{top} = 0\}$.
The present work is focused on the underlined case $\mathrm{int}(\conv(\rote)) \neq\emptyset$. Note that an open and dense subset of $\Homeo_0(S)$ was studied in depth from a rotational viewpoint in \cite{MR4578317}.}\label{FigSpaceHomeo}
\end{center}
\end{figure}
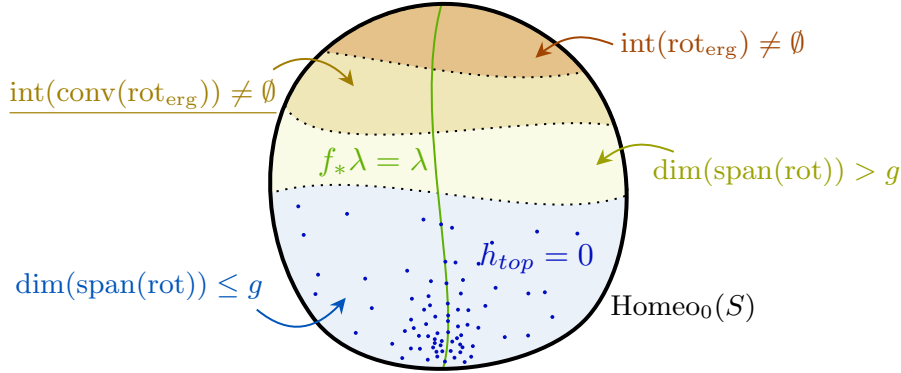

The results of the present work suggest that homeomorphisms with big rotation set should be seen as ``axiom A from the rotational viewpoint'': the whole rotational dynamics is described by a finite number of pieces bearing some hyperbolic-like features together with the heteroclinic connections between them (see Figure~\ref{FigExIntro}). 
\bigskip

The proofs will be made of rather direct applications of the companion article \cite{G25Cvx1}, itself based on a bounded deviations result \cite{paper1PAF} as well as the study of rotational properties of invariant measures \cite{alepablo}. The use of invariant measures as a line of weakness in breaking down the dynamics and of the forcing theory of Le Calvez and Tal \cite{lct1, lct2} as a technical tool are the keys to the recent progresses in the study of rotational behaviour of homeomorphisms in higher genus. 

The actual paper is an illustration of the potential applications of the hyperbolic-like techniques developed in the companion paper \cite{G25Cvx1}, which may be an important tool in the description of the rotational dynamics of any homeomorphism. 

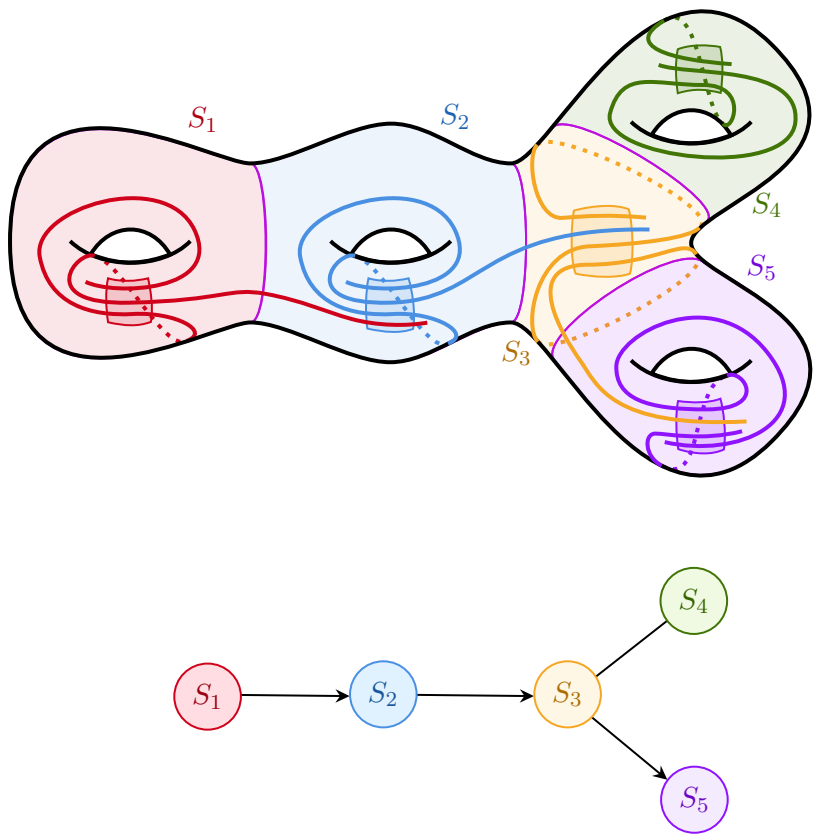
\begin{figure}[!t]
\begin{center}
\tikzset{every picture/.style={line width=0.75pt}} 
\vspace{-25pt}
\begin{tikzpicture}[x=0.75pt,y=0.75pt,yscale=-1,xscale=1]

\draw [color={rgb, 255:red, 144; green, 19; blue, 254 }  ,draw opacity=1 ][line width=1.5]  [dash pattern={on 1.69pt off 2.76pt}]  (365.28,232.01) .. controls (387.12,244.01) and (379.05,191.88) .. (397.87,186.65) ;
\draw [color={rgb, 255:red, 245; green, 166; blue, 35 }  ,draw opacity=1 ][line width=1.5]  [dash pattern={on 1.69pt off 2.76pt}]  (303.47,169.71) .. controls (311.35,179.56) and (394.43,135.56) .. (382.58,125.71) ;
\draw [color={rgb, 255:red, 245; green, 166; blue, 35 }  ,draw opacity=1 ][line width=1.5]  [dash pattern={on 1.69pt off 2.76pt}]  (303.11,70.63) .. controls (311.62,60.63) and (397.2,102.94) .. (384.08,112.33) ;
\draw [color={rgb, 255:red, 65; green, 117; blue, 5 }  ,draw opacity=1 ][line width=1.5]  [dash pattern={on 1.69pt off 2.76pt}]  (366.47,8.19) .. controls (384.4,-3.93) and (385.77,53.55) .. (395.99,59.99) ;
\draw [color={rgb, 255:red, 74; green, 144; blue, 226 }  ,draw opacity=1 ][line width=1.5]  [dash pattern={on 1.69pt off 2.76pt}]  (212.23,126.52) .. controls (226.07,129.9) and (247.33,176.43) .. (258.65,170.58) ;
\draw [color={rgb, 255:red, 208; green, 2; blue, 27 }  ,draw opacity=1 ][line width=1.5]  [dash pattern={on 1.69pt off 2.76pt}]  (82.23,126.52) .. controls (96.07,129.9) and (116.84,175.13) .. (128.16,169.29) ;
\draw  [color={rgb, 255:red, 189; green, 16; blue, 224 }  ,draw opacity=1 ][fill={rgb, 255:red, 144; green, 19; blue, 254 }  ,fill opacity=0.1 ] (386.74,130.02) .. controls (373.02,117.02) and (300.62,165.95) .. (311.85,179.44) .. controls (340.01,216.6) and (377.72,261.75) .. (420,220) .. controls (465.44,174.32) and (420.87,150.89) .. (386.74,130.02) -- cycle ;
\draw  [color={rgb, 255:red, 189; green, 16; blue, 224 }  ,draw opacity=1 ][fill={rgb, 255:red, 65; green, 117; blue, 5 }  ,fill opacity=0.1 ] (310.7,62.01) .. controls (320.29,50.63) and (400.66,100.63) .. (387.33,109.62) .. controls (430.43,82.1) and (459.99,61.88) .. (420,20) .. controls (379.77,-21.9) and (338.66,24.77) .. (310.7,62.01) -- cycle ;
\draw  [color={rgb, 255:red, 189; green, 16; blue, 224 }  ,draw opacity=1 ][fill={rgb, 255:red, 245; green, 166; blue, 35 }  ,fill opacity=0.1 ] (310.7,62.01) .. controls (320.02,50.63) and (400.84,100.81) .. (387.33,109.62) .. controls (381.93,115.02) and (380.7,114.01) .. (380,120) .. controls (380.7,126.3) and (384.73,128.02) .. (386.73,130.02) .. controls (373.02,117.15) and (300.22,166.35) .. (311.84,179.44) .. controls (305.43,172.86) and (298.13,160.59) .. (290,160) .. controls (300.13,159.82) and (300.13,79.82) .. (290,80) .. controls (299.13,78.22) and (304.13,68.87) .. (310.7,62.01) -- cycle ;
\draw  [color={rgb, 255:red, 189; green, 16; blue, 224 }  ,draw opacity=1 ][fill={rgb, 255:red, 74; green, 144; blue, 226 }  ,fill opacity=0.1 ] (230,60) .. controls (250,59.53) and (270,80.14) .. (290,80) .. controls (300.13,79.62) and (300.13,159.82) .. (290,160) .. controls (270.67,160.8) and (251,180.19) .. (230,180) .. controls (209,179.8) and (180.67,160.19) .. (160,160) .. controls (170.67,159.86) and (170.34,80.19) .. (160,80) .. controls (178.67,80.14) and (210,60.47) .. (230,60) -- cycle ;
\draw  [color={rgb, 255:red, 189; green, 16; blue, 224 }  ,draw opacity=1 ][fill={rgb, 255:red, 208; green, 2; blue, 27 }  ,fill opacity=0.1 ] (40,120) .. controls (40.21,21.33) and (139.73,80.32) .. (160,80) .. controls (169.84,80.08) and (170.34,159.83) .. (160,160) .. controls (139.42,159.92) and (40.58,220.6) .. (40,120) -- cycle ;
\draw  [fill={rgb, 255:red, 255; green, 255; blue, 255 }  ,fill opacity=1 ][line width=1.5]  (119.89,125.85) .. controls (108.41,129.97) and (96.62,132.39) .. (80.3,125.66) .. controls (90.28,109.13) and (110.17,109.02) .. (119.89,125.85) -- cycle ;
\draw [line width=1.5]    (70,119.08) .. controls (85.09,133.8) and (116.09,133.47) .. (130,119.08) ;

\draw  [color={rgb, 255:red, 208; green, 2; blue, 27 }  ,draw opacity=1 ][fill={rgb, 255:red, 208; green, 2; blue, 27 }  ,fill opacity=0.15 ] (88.93,137.79) .. controls (95.54,139.75) and (102.77,139.29) .. (108.93,137.79) .. controls (110.01,143.59) and (111.7,151.56) .. (110.31,159.75) .. controls (103.59,162.17) and (97.39,161.75) .. (88.93,159.9) .. controls (87.85,152.48) and (87.54,144.2) .. (88.93,137.79) -- cycle ;
\draw [color={rgb, 255:red, 208; green, 2; blue, 27 }  ,draw opacity=1 ][line width=1.5]    (77.71,139.54) .. controls (91.37,146.54) and (146.91,144.82) .. (132.03,113.43) .. controls (117.15,82.04) and (50.38,102.67) .. (54.82,126.89) .. controls (59.26,151.11) and (80.2,156.93) .. (101.39,155.75) .. controls (122.58,154.56) and (140.78,164.67) .. (128.16,169.29) ;
\draw  [line width=1.5]  (160,80) .. controls (179.42,79.96) and (209,60.14) .. (230,60) .. controls (251,59.86) and (270.34,79.86) .. (290,80) .. controls (309.67,80.14) and (360.39,-39.28) .. (420,20) .. controls (479.61,79.28) and (380.28,100.94) .. (380,120) .. controls (379.72,139.06) and (480.82,159.39) .. (420,220) .. controls (359.18,280.61) and (310.61,159.86) .. (290,160) .. controls (269.39,160.14) and (250.67,179.86) .. (230,180) .. controls (209.34,180.14) and (178.85,159.97) .. (160,160) .. controls (141.15,160.03) and (40.58,219.75) .. (40,120) .. controls (39.42,20.25) and (140.58,80.04) .. (160,80) -- cycle ;
\draw  [fill={rgb, 255:red, 255; green, 255; blue, 255 }  ,fill opacity=1 ][line width=1.5]  (249.89,125.85) .. controls (238.41,129.97) and (226.62,132.39) .. (210.3,125.66) .. controls (220.28,109.13) and (240.17,109.02) .. (249.89,125.85) -- cycle ;
\draw [line width=1.5]    (200,119.08) .. controls (215.09,133.8) and (246.09,133.47) .. (260,119.08) ;

\draw  [fill={rgb, 255:red, 255; green, 255; blue, 255 }  ,fill opacity=1 ][line width=1.5]  (399.89,65.85) .. controls (388.41,69.97) and (376.62,72.39) .. (360.3,65.66) .. controls (370.28,49.13) and (390.17,49.03) .. (399.89,65.85) -- cycle ;
\draw [line width=1.5]    (350,59.08) .. controls (365.08,73.81) and (396.08,73.47) .. (410,59.08) ;

\draw  [fill={rgb, 255:red, 255; green, 255; blue, 255 }  ,fill opacity=1 ][line width=1.5]  (399.89,185.85) .. controls (388.41,189.97) and (376.62,192.39) .. (360.3,185.66) .. controls (370.28,169.13) and (390.17,169.03) .. (399.89,185.85) -- cycle ;
\draw [line width=1.5]    (350,179.08) .. controls (365.08,193.81) and (396.08,193.47) .. (410,179.08) ;

\draw  [color={rgb, 255:red, 74; green, 144; blue, 226 }  ,draw opacity=1 ][fill={rgb, 255:red, 74; green, 144; blue, 226 }  ,fill opacity=0.15 ] (218.93,137.79) .. controls (225.54,139.75) and (232.77,139.29) .. (238.93,137.79) .. controls (240.69,143.75) and (242.58,155.06) .. (241.19,163.25) .. controls (234.47,165.67) and (226.82,164.93) .. (218.36,163.08) .. controls (217.28,155.66) and (217.54,144.2) .. (218.93,137.79) -- cycle ;
\draw [color={rgb, 255:red, 74; green, 144; blue, 226 }  ,draw opacity=1 ][line width=1.5]    (207.71,139.54) .. controls (221.37,146.54) and (276.91,144.82) .. (262.03,113.43) .. controls (247.15,82.04) and (180.38,102.67) .. (184.82,126.89) .. controls (189.26,151.11) and (210.2,156.93) .. (231.39,155.75) .. controls (252.58,154.56) and (271.28,165.97) .. (258.65,170.58) ;
\draw [color={rgb, 255:red, 208; green, 2; blue, 27 }  ,draw opacity=1 ][line width=1.5]    (82.23,126.52) .. controls (73.61,123.29) and (51.46,151.13) .. (99.46,150.06) .. controls (147.46,148.98) and (149.17,142.2) .. (167.57,145.6) .. controls (185.97,149) and (197.25,154.59) .. (210.25,158.19) .. controls (223.25,161.79) and (238.47,161.88) .. (248.27,160.08) ;
\draw  [color={rgb, 255:red, 65; green, 117; blue, 5 }  ,draw opacity=1 ][fill={rgb, 255:red, 65; green, 117; blue, 5 }  ,fill opacity=0.15 ] (373.33,22.59) .. controls (379.67,20.03) and (388.44,20.34) .. (394.44,21.42) .. controls (395.98,27.11) and (395.98,36.03) .. (394.71,44.55) .. controls (388.21,42.88) and (381.99,41.99) .. (373.33,44.7) .. controls (372.25,37.28) and (371.94,29) .. (373.33,22.59) -- cycle ;
\draw [color={rgb, 255:red, 65; green, 117; blue, 5 }  ,draw opacity=1 ][line width=1.5]    (363.77,30.44) .. controls (378.09,36.09) and (435.1,29.77) .. (431.99,59.33) .. controls (428.88,88.88) and (333.54,77.33) .. (339.54,54.66) .. controls (345.54,31.99) and (362.04,40.66) .. (385.79,40.55) .. controls (409.54,40.44) and (400.43,65.55) .. (395.99,59.99) ;
\draw [color={rgb, 255:red, 65; green, 117; blue, 5 }  ,draw opacity=1 ][line width=1.5]    (400.18,30.06) .. controls (369.34,27.94) and (345.59,20.94) .. (366.47,8.19) ;
\draw  [color={rgb, 255:red, 245; green, 166; blue, 35 }  ,draw opacity=1 ][fill={rgb, 255:red, 245; green, 166; blue, 35 }  ,fill opacity=0.15 ] (321.49,103.73) .. controls (330.91,101.16) and (342.63,100.3) .. (348.91,103.16) .. controls (350.91,111.73) and (351.49,126.87) .. (349.49,135.44) .. controls (342.76,137.86) and (330.8,137.29) .. (322.34,135.44) .. controls (319.49,128.3) and (320.1,110.15) .. (321.49,103.73) -- cycle ;
\draw [color={rgb, 255:red, 245; green, 166; blue, 35 }  ,draw opacity=1 ][line width=1.5]    (303.11,70.63) .. controls (297.26,76.79) and (301.93,108.48) .. (315.78,107.4) .. controls (329.62,106.33) and (339.01,105.71) .. (357.47,107.25) ;
\draw [color={rgb, 255:red, 245; green, 166; blue, 35 }  ,draw opacity=1 ][line width=1.5]    (303.47,169.71) .. controls (296.12,162.02) and (302.39,122.94) .. (329.2,121.71) .. controls (356.01,120.48) and (379.66,117.4) .. (384.08,112.33) ;
\draw [color={rgb, 255:red, 74; green, 144; blue, 226 }  ,draw opacity=1 ][line width=1.5]    (212.23,126.52) .. controls (203.61,123.29) and (181.46,151.13) .. (229.46,150.06) .. controls (277.46,148.98) and (273.7,112.02) .. (359.2,113.25) ;
\draw  [color={rgb, 255:red, 144; green, 19; blue, 254 }  ,draw opacity=1 ][fill={rgb, 255:red, 144; green, 19; blue, 254 }  ,fill opacity=0.15 ] (373.48,199.51) .. controls (380.29,200.49) and (388.91,200.65) .. (394.6,198.34) .. controls (396.14,204.03) and (397.6,215.36) .. (396.33,223.88) .. controls (390.64,226.65) and (382.17,226.49) .. (373.56,223.72) .. controls (372.48,216.3) and (372.09,205.93) .. (373.48,199.51) -- cycle ;
\draw [color={rgb, 255:red, 144; green, 19; blue, 254 }  ,draw opacity=1 ][line width=1.5]    (366.79,220.18) .. controls (402.94,228.49) and (445.41,210.34) .. (418.02,175.11) .. controls (390.64,139.88) and (342.94,165.88) .. (342.64,182.03) .. controls (342.33,198.18) and (357.25,203.88) .. (388.02,203.72) .. controls (418.79,203.57) and (406.33,183.26) .. (397.87,186.65) ;
\draw [color={rgb, 255:red, 144; green, 19; blue, 254 }  ,draw opacity=1 ][line width=1.5]    (365.28,232.01) .. controls (356.57,227.54) and (356.34,215.9) .. (362.1,215.9) .. controls (367.87,215.9) and (385.32,219.79) .. (405.4,214.72) ;
\draw [color={rgb, 255:red, 245; green, 166; blue, 35 }  ,draw opacity=1 ][line width=1.5]    (407.63,209.78) .. controls (339.63,213.19) and (333.07,191.32) .. (324.58,179.14) .. controls (316.1,166.96) and (294.05,131.09) .. (328.44,130.95) .. controls (362.84,130.82) and (376.89,116.63) .. (382.58,125.71) ;

\draw (127.08,65.48) node [anchor=south west] [inner sep=0.75pt]  [color={rgb, 255:red, 174; green, 0; blue, 21 }  ,opacity=1 ]  {$S_{1}$};
\draw (253.08,63.98) node [anchor=south west] [inner sep=0.75pt]  [color={rgb, 255:red, 30; green, 99; blue, 179 }  ,opacity=1 ]  {$S_{2}$};
\draw (301.98,167.9) node [anchor=north east] [inner sep=0.75pt]  [color={rgb, 255:red, 174; green, 110; blue, 5 }  ,opacity=1 ]  {$S_{3}$};
\draw (408.58,93.4) node [anchor=north west][inner sep=0.75pt]  [color={rgb, 255:red, 65; green, 117; blue, 5 }  ,opacity=1 ]  {$S_{4}$};
\draw (406.08,139.98) node [anchor=south west] [inner sep=0.75pt]  [color={rgb, 255:red, 118; green, 3; blue, 220 }  ,opacity=1 ]  {$S_{5}$};
\end{tikzpicture}

\begin{tikzpicture}[x=0.75pt,y=0.75pt,yscale=-1,xscale=1]

\draw    (118,133) -- (185.41,133.71) ;
\draw [shift={(188.41,133.74)}, rotate = 180.61] [fill={rgb, 255:red, 0; green, 0; blue, 0 }  ][line width=0.08]  [draw opacity=0] (7.14,-3.43) -- (0,0) -- (7.14,3.43) -- (4.74,0) -- cycle    ;
\draw    (210,133) -- (277.41,133.71) ;
\draw [shift={(280.41,133.74)}, rotate = 180.61] [fill={rgb, 255:red, 0; green, 0; blue, 0 }  ][line width=0.08]  [draw opacity=0] (7.14,-3.43) -- (0,0) -- (7.14,3.43) -- (4.74,0) -- cycle    ;
\draw    (297.71,134.71) -- (344.68,173.9) ;
\draw [shift={(346.98,175.83)}, rotate = 219.84] [fill={rgb, 255:red, 0; green, 0; blue, 0 }  ][line width=0.08]  [draw opacity=0] (7.14,-3.43) -- (0,0) -- (7.14,3.43) -- (4.74,0) -- cycle    ;
\draw    (360.05,85.47) -- (297.71,134.71) ;

\draw  [color={rgb, 255:red, 208; green, 2; blue, 27 }  ,draw opacity=1 ][fill={rgb, 255:red, 255; green, 221; blue, 227 }  ,fill opacity=1 ]  (117.5, 134) circle [x radius= 16.62, y radius= 16.62]   ;
\draw (117.5,134) node  [color={rgb, 255:red, 152; green, 0; blue, 17 }  ,opacity=1 ]  {$S_{1}$};
\draw  [color={rgb, 255:red, 74; green, 144; blue, 226 }  ,draw opacity=1 ][fill={rgb, 255:red, 224; green, 241; blue, 255 }  ,fill opacity=1 ]  (205.21, 132.86) circle [x radius= 16.62, y radius= 16.62]   ;
\draw (205.21,132.86) node  [color={rgb, 255:red, 26; green, 87; blue, 156 }  ,opacity=1 ]  {$S_{2}$};
\draw  [color={rgb, 255:red, 245; green, 166; blue, 35 }  ,draw opacity=1 ][fill={rgb, 255:red, 255; green, 247; blue, 229 }  ,fill opacity=1 ]  (297.21, 132.86) circle [x radius= 16.62, y radius= 16.62]   ;
\draw (297.21,132.86) node  [color={rgb, 255:red, 174; green, 108; blue, 0 }  ,opacity=1 ]  {$S_{3}$};
\draw  [color={rgb, 255:red, 144; green, 19; blue, 254 }  ,draw opacity=1 ][fill={rgb, 255:red, 245; green, 235; blue, 255 }  ,fill opacity=1 ]  (360.5, 185.86) circle [x radius= 16.62, y radius= 16.62]   ;
\draw (360.5,185.86) node  [color={rgb, 255:red, 107; green, 7; blue, 193 }  ,opacity=1 ]  {$S_{5}$};
\draw  [color={rgb, 255:red, 65; green, 117; blue, 5 }  ,draw opacity=1 ][fill={rgb, 255:red, 240; green, 250; blue, 226 }  ,fill opacity=1 ]  (360.21, 85.86) circle [x radius= 16.62, y radius= 16.62]   ;
\draw (360.21,85.86) node  [color={rgb, 255:red, 54; green, 100; blue, 1 }  ,opacity=1 ]  {$S_{4}$};

\end{tikzpicture}

\caption{An example of homeomorphism of $\Homeo_0(S)$ with big rotation set. It has 5 different rotational horseshoes having (or not) heteroclinic connections. In the paper we will define sub-surfaces $S_i$ associated to the dynamics and a graph whose vertices are those sub-surfaces and the edges are given by the adjacency; some will be oriented by the relation of heteroclinic connections. We will see that a homeomorphism with big rotation set resembles a lot the one of this example.}\label{FigExIntro}

\end{center}
\end{figure}

\subsection*{Rotation sets}

Let $S$ be an orientable closed hyperbolic surface. In other words, $S$ is homeomorphic to the connected sum of $g$ tori, with $g\ge 2$. We endow $S$ with a metric of constant curvature $-1$. The universal cover $\wt S$ of $S$ is isometric to the hyperbolic plane $\Hy^2$, the group of deck transformations of this cover will be denoted $\G$. 

We denote $\Homeo_0(S)$ the set of homeomorphisms of $S$ that are isotopic (or, equivalently, by \cite{zbMATH03221970}, homotopic) to the identity. Any element $f\in \Homeo_0(S)$ has a preferred lift $\wt f \in \Homeo(\wt S)$ that commutes with the deck transformations; this homeomorphism $\wt f$ has a contractible fixed point, and extends to the identity on $\partial \wt S$. 
We denote $\mathcal{M}(f)$ the set of $f$-invariant Borel probability measures, and $\mathcal{M}^{\textnormal{erg}}(f)$ the subset of $\mathcal{M}(f)$ made of $f$-ergodic measures. 

Let us define the homological rotation set of such a homeomorphism $f\in \Homeo_0(S)$; this definition is due to Schwarzman \cite{MR88720} and was adapted for surface homeomorphisms by Pollicott \cite{pollicott}.
We recall that as $S$ is a closed surface of genus $g$, the homology group $H_1(S,\R)$ is a real vector space of dimension $2g$.
We equip the homology $H_1(S,\R)$ with a norm $\|\mathord{\cdot}\|$ and the intersection form $\wedge$ coming from the classical cup product on cohomology via Poincar\'e duality. This intersection form coincides with the classical algebraic intersection number in restriction to elements of $H_1(S,\Z)$ (\emph{e.g.}\ \cite{lellouch}).
Given \(a \in \G\), we denote \([a] \in H_1(S,\R)\) its \emph{homology class}.

Fix a bounded and measurable fundamental domain \(D \subset \widetilde S\) for the action of \(\G\) on \(\widetilde S\) and denote \(\widetilde x\) the lift of \(x \in S\) to \(D\). 
For each \( y \in S\) let \(a_{y}\) be the unique element of $\G$ such that \(a_y^{-1}\wt f(\wt y) \in D\).\label{Defay}
For any path $\beta : [0,1]\to S$, we consider $\wt\beta : [0,1]\to \wt S$ the lift of $\beta$ such that $\wt\beta(0)\in D$, and $T_\beta\in \G$ such that $\wt\beta(1)\in T_\beta D$. This allows to define $[\beta] = [T_\beta]\in H_1(S,\Z)$ .

\begin{definition}\label{DefHomRotVect}
Given an \(f\)-invariant probability measure \(\mu\), the \emph{homological rotation vector} of \(\mu\) is
\begin{equation}\label{eq:homologyequation}
\rho(\mu) = \int_{S}[a_y]\dd\mu(y).
\end{equation}
\end{definition}

Note that by Birkhoff ergodic theorem, if moreover \(\mu\) is ergodic, then for \(\mu\)-almost every \(x \in S\)
\begin{equation}\label{eq:homologyequation2}
\rho(\mu) = \int_{S}[a_y]\dd\mu(y) = \lim\limits_{n \to +\infty}\frac{1}{n}\sum_{i=0}^{n-1}[a_{f^i(x)}].
\end{equation}
If $x\in S$ is such that the right equality of \eqref{eq:homologyequation2} holds, we will denote $\rho(x) = \rho(\mu)$. More generally, we will denote $\rho(x)$ as the set of accumulation points of the sequence
\[\left(\frac{1}{n}\sum_{i=0}^{n-1}[a_{f^i(x)}]\right)_n.\]

This definition is independent of the choice of the fundamental domain $D$, moreover the map $\mu\mapsto \rho(\mu)$ is affine and continuous (see \cite[Remark~1.2]{G25Cvx1}). 

\begin{definition}[Homological rotation sets]\label{DefErgHomRot}
Let $f \in \Homeo_0(S)$. 
The \emph{(homological) rotation set} $\rot(f)$ of $f$ is the set of vectors $\rho\in H_1(S,\R)$ such that there exists $(x_k)_k\in S^\N$ and $(n_k)_k\in\N^\N$ with $\lim_{k\to+\infty} n_k = +\infty$ and such that 
\[\lim\limits_{k \to +\infty}\frac{1}{n_k}\sum_{i=0}^{n_k-1}[a_{f^{i}(x_k)}] = \rho.\]
The \emph{ergodic (homological) rotation set} $\rote(f)$ of $f$ is
\[\rote(f) = \big\{\rho(\mu) \mid \mu \in \Me(f)\big\}.\] 
\end{definition}

Denote $\conv(A)$ the convex hull of a set $A$. The following lemma is straightforward.

\begin{lemma}\label{LemInclusEnsRot}
Let $f\in\Homeo_0(S)$. Then $\rot(f)$ is compact, and 
\[\rote(f)\subset \rot(f)\subset \conv\big(\rote(f)\big).\]
\end{lemma}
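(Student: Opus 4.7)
My plan is to treat the three claims in turn, with nearly all of the work going into the last inclusion. For compactness of $\rot(f)$, I first observe that since $D$ is bounded, $\wt f(\overline D)$ is compact and hence meets only finitely many translates $aD$ with $a\in\G$; consequently $y\mapsto [a_y]$ takes values in a finite subset of $H_1(S,\Z)$, the averages defining $\rot(f)$ are uniformly bounded, and $\rot(f)$ is bounded. Closedness follows from a diagonal argument on the witness sequences. The inclusion $\rote(f)\subset \rot(f)$ is the immediate content of \eqref{eq:homologyequation2}: Birkhoff's theorem applied to $\mu\in\Me(f)$ produces a point $x$ whose Birkhoff averages of $y\mapsto [a_y]$ converge to $\rho(\mu)$, which is precisely a witness for $\rho(\mu)\in\rot(f)$.

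For the remaining inclusion $\rot(f)\subset \conv(\rote(f))$, I start with $\rho\in\rot(f)$ together with witness sequences $(x_k,n_k)$, and form the empirical measures $\mu_k = \frac{1}{n_k}\sum_{i=0}^{n_k-1}\delta_{f^i(x_k)}$. Weak-$*$ compactness of probability measures on $S$ yields a subsequential limit $\mu$, and the standard Krylov--Bogolyubov computation shows $\mu\in\M(f)$. The goal is then to establish $\rho(\mu)=\rho$, after which the ergodic decomposition $\mu = \int \mu_\omega\, dP(\omega)$ writes $\rho = \int \rho(\mu_\omega)\, dP(\omega)$ as a barycenter of elements of $\rote(f)$ and places $\rho$ in $\conv(\rote(f))$ (using in finite dimension that this set and its closure differ only inessentially, e.g.\ via Carath\'eodory).

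The main obstacle is the identification $\rho(\mu)=\rho$, since $y\mapsto [a_y]$ is only measurable and its averages cannot be passed to the weak-$*$ limit directly. The plan is to dualize and use a cocycle identity: for each $c\in H^1(S,\R)$, represent $c$ by a continuous closed $1$-form and take a $\G$-equivariant primitive $U_c:\wt S\to \R$ satisfying $U_c(\gamma z) = U_c(z) + c([\gamma])$. Setting $v_c(y) := U_c(\wt y)$ (bounded, measurable) and $\psi_c(y) := U_c(\wt f(\wt y)) - U_c(\wt y)$, the relation $\wt f(\wt y) = a_y\,\wt{f(y)}$ yields
\[
c([a_y]) \;=\; \psi_c(y) + v_c(y) - v_c(f(y)),
\]
and the $\G$-equivariance of $\wt f$ together with the cocycle property of $U_c$ makes $\psi_c$ continuous on $S$ (the discontinuities of $\wt y$ and of $\wt f(\wt y)$ across $\partial D$ shift by the same element $\gamma\in\G$, hence cancel under $U_c$). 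Telescoping along the orbit of $x_k$ then yields $c\bigl(\tfrac{1}{n_k}\sum_{i=0}^{n_k-1}[a_{f^i(x_k)}]\bigr) = \int \psi_c\,d\mu_k + O(1/n_k)$; passing to the limit and using weak-$*$ convergence on the continuous integrand $\psi_c$ gives $c(\rho) = c(\rho(\mu))$ for every $c\in H^1(S,\R)$, whence $\rho = \rho(\mu)$.
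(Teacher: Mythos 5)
Your proof is correct and follows the standard route: the paper itself calls the lemma straightforward and delegates the inclusion $\rot(f)\subset\conv(\rote(f))$ to Pollicott, and your argument (Krylov--Bogolyubov on empirical measures, identification of $\rho(\mu)$ with $\rho$ by replacing the displacement cocycle $y\mapsto c([a_y])$ with the continuous function $\psi_c$ up to the coboundary of $v_c$, then ergodic decomposition) is precisely that reference's argument reconstructed. The only point worth tightening is the last line: to place the barycenter $\int\rho(\mu_\omega)\,dP(\omega)$ in $\conv(\rote(f))$ rather than its closure, the clean justification is not Carath\'eodory but a separation-plus-induction-on-dimension argument (if the barycenter lay on a supporting hyperplane of the convex hull, $P$ would concentrate on that hyperplane and one drops a dimension), which is a true and standard finite-dimensional fact.
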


For a proof of the last inclusion, see \cite[Corollary 1.2]{pollicott}. 

This lemma implies that $\conv\rot(f)\subset \conv\rote(f)$. Therefore, the following properties are equivalent:
\begin{itemize}
\item $\inte\big(\conv\rot(f)\big)\neq\emptyset$;
\item $\inte\big(\conv\rote(f)\big)\neq\emptyset$;
\item $\inte\big\{\rot(\mu)\mid \mu\in\mathcal{M}(f)\big\}\neq\emptyset$;
\item $\operatorname{span}\big(\conv\rot(f)\big) = H_1(S,\R)$.
\end{itemize}

\begin{definition}\label{DefBig}
We say that $f\in \Homeo_0(S)$ has \emph{a big rotation set} if one of the above equivalent properties holds.
\end{definition}

\paragraph{Standing hypothesis:} For now on all considered homeomorphisms are elements of $\Homeo_0(S)$ having a big rotation set.

\subsection*{Torus homeomorphisms with big rotation set}

In the case of the torus, there is a more precise description of the relations between the two rotation sets $\rot(f)$ and $\rote(f)$.

Indeed, the same definition of rotation set can be applied in the case of the two torus $\T^2$, with the difference that the obtained sets do depend on the chosen lift $\wt f$ of $f$; however the sets associated to two different lifts only differ by an integer translation. 

For $f\in\Homeo_0(\T^2)$:
\begin{enumerate}[label= \Alph*)]
\item\label{ItemA} the set $\rot(\wt f)$ is convex \cite{zbMATH04084609} and hence $\rot(\wt f) =\conv(\rote(\wt f))$;
\item\label{ItemB} if $f$ has a big rotation set, then $\rot(\wt f) \setminus \rote(\wt f) \subset \partial\rot(f) \setminus \operatorname{extr}(\rot(f))$ \cite{zbMATH00009916};
\item\label{ItemC} if $f$ has a big rotation set, then $f$ has bounded deviations with respect to $\rot(f)$ \cite{zbMATH06425076, lct1} (see Theorem~\ref{TheoBndedHomoLellou} for a definition);
\item\label{ItemD} the so-called \emph{Boyland conjecture} holds: if $f$ has big rotation set and preserves area, then the rotation vector of the Lebesgue measure belongs to the interior of the rotation set \cite{zbMATH06425076, lct1}.
\end{enumerate}


\subsection*{The shape of big rotation sets}

For higher genus surfaces, the last inclusion of Lemma~\ref{LemInclusEnsRot} is not an equality in general, as the convexity of the rotation set $\rot(f)$ does not always hold (see Matsumoto's example: \cite[Proposition 3.2]{matsumoto}, \cite[Figure 10]{alepablo}; another example is depicted in Figure~\ref{FigTree} page \pageref{FigTree}).

Properties similar to \ref{ItemA} and \ref{ItemB} were already proved in higher genus under the hypothesis that $\inte(\rote(f))\neq\emptyset$ (see \cite{guiheneuf2023hyperbolic, alepablo}).

In this article, we first prove a counterpart of \ref{ItemA} under the weaker hypothesis of big rotation sets:

\begin{theo}\label{MainTheoUnionConvex}
Let $f\in\Homeo_0(S)$ with big rotation set (Definition~\ref{DefBig}). Then $\rot(f)$ is the union of at most $\max(g, \lfloor g^2/4\rfloor)$ convex sets $\rho'_j$ containing 0.
Each $\rho'_j$ is of the form (see \cite[Subsection~2.2]{G25Cvx1} for the definition of the objects)
\[\rho'_j = \conv\left(\bigcup_{i\in I^j} \rho_i\right),\]
where $I^j$ is a subset of $I_{\mathrm{h}}$.
\end{theo}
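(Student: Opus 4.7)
The plan is to apply the finite convex decomposition theorem of the companion paper \cite{G25Cvx1} and then to cluster the elementary pieces $\rho_i$ ($i\in I_{\mathrm{h}}$) into maximal convex subsets of $\rot(f)$, bounding the number of clusters by a topological-combinatorial argument on the genus.

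First I would invoke the main structure theorem of \cite{G25Cvx1} to obtain the finite family $(\rho_i)_{i\in I_{\mathrm{h}}}$ of convex elementary ``hyperbolic'' pieces whose union (together with the heteroclinic segments produced by the forcing theory of Le Calvez--Tal) coincides with $\rot(f)$. Each $\rho_i$ contains $0$: indeed, the preferred lift $\wt f$ has a contractible fixed point, hence produces an invariant measure of zero rotation vector that can be associated with each piece. I would then define a binary relation on $I_{\mathrm h}$ by $i \sim i'$ iff $\conv(\rho_i \cup \rho_{i'}) \subset \rot(f)$, take its reflexive/transitive closure, and for each equivalence class $I^j$ set
\[
\rho'_j = \conv\Bigl(\bigcup_{i \in I^j}\rho_i\Bigr).
\]
Convexity is automatic, $0 \in \rho'_j$ is inherited from each $\rho_i$, and the $\rho'_j$ cover $\rot(f)$ since every rotation vector lies in some $\rho_i$. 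The non-trivial point is that $\rho'_j \subset \rot(f)$, which reduces to the stability of the ``can-be-joined by a heteroclinic chain'' relation under concatenation: whenever $\conv(\rho_i\cup\rho_k)$ and $\conv(\rho_k\cup\rho_{k'})$ are both contained in $\rot(f)$, so is $\conv(\rho_i\cup\rho_{k'})$. This should be a direct consequence of the heteroclinic forcing produced in \cite{G25Cvx1}, since concatenating heteroclinic connections between the underlying pseudo-Anosov-like pieces produces orbits realising any convex combination of the corresponding rotation vectors.

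The remaining, and genuinely delicate, part is the count $\#\{\rho'_j\} \le \max(g, \lfloor g^2/4 \rfloor)$. By \cite{G25Cvx1} each class $I^j$ is supported on a connected essential sub-surface $S_j \subset S$ (as in Figure~\ref{FigExIntro}), and two classes $j \neq j'$ are genuinely distinct exactly when no heteroclinic bridge merges them, which forces $S_j$ and $S_{j'}$ to be realisable disjointly up to isotopy. Pairwise-disjoint essential sub-surfaces on a genus-$g$ surface are subject to an obvious genus-additivity bound, which gives the estimate $g$; the sharper bound $\lfloor g^2/4 \rfloor$ (which dominates for $g \ge 4$) should come from a Tur\'an / Mantel-type argument on the ``cannot-be-merged'' graph on $\{S_j\}$: configurations of three pairwise non-mergeable pieces force an extra genus contribution, so the extremal configurations are bipartite, giving the Mantel number $\lfloor g^2/4 \rfloor$.

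The main obstacle I anticipate is precisely this combinatorial-topological bound. Two sub-points require care: first, verifying that the relation $\sim$ really is transitive in the sense above (so that the $\rho'_j$'s are well-defined convex subsets of $\rot(f)$), and second, identifying the exact genus/intersection constraint on the family $\{S_j\}$ that yields the sharp constant $\max(g, \lfloor g^2/4\rfloor)$, rather than a weaker bound such as $3g-3$. Both points should be accessible by combining the hyperbolic-like structural output of \cite{G25Cvx1} with elementary surface topology and extremal graph theory, but they are where the genuine new content of this theorem lies; the rest is repackaging of the companion paper.
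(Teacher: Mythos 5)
Your proposal has a genuine gap at its central step: the relation $i\sim i'$ defined by $\conv(\rho_i\cup\rho_{i'})\subset\rot(f)$ is \emph{not} transitive, and taking its transitive closure produces sets that are in general not contained in $\rot(f)$. The point is that $\conv(\rho_i\cup\rho_{i'})\subset\rot(f)$ holds exactly when there is a heteroclinic connection $\cl_i\to\cl_{i'}$ \emph{or} $\cl_{i'}\to\cl_i$ (Proposition~\ref{TheoEquiConnec1}), and this symmetrized relation does not concatenate: if $\cl_{i}\to\cl_k$ and $\cl_{k'}\to\cl_k$, there need be no connection between $\cl_i$ and $\cl_{k'}$, and indeed $\conv(\rho_i\cup\rho_{k'})\not\subset\rot(f)$ in general. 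This is precisely the mechanism behind the non-convexity of $\rot(f)$ (Matsumoto's example, and the example of Figure~\ref{FigTree}, where the four pieces pairwise satisfy various one-sided connections but the rotation set is the union of four $4$-dimensional convex sets, not their global convex hull). Relatedly, the sets $I^j$ in the statement are \emph{not} a partition of $I_{\mathrm h}$: in the extremal examples of Subsection~\ref{paragSharpness} the $\lfloor g^2/4\rfloor$ convex pieces pairwise share elementary pieces $\rho_i$, so no equivalence-class construction can produce them. The correct combinatorial object is the finite \emph{oriented, acyclic} graph $\Tr$ on the classes, with $\rot(f)=\bigcup_{\mathrm p}\conv(\{\rho_i\}_{i\in\mathrm p})$ over oriented paths $\mathrm p$ (Corollary~\ref{CoroPropRotFRotG}); the convex pieces correspond to \emph{maximal oriented paths}, not to merged clusters.

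Consequently your counting argument also aims at the wrong quantity. The paper does not run a Tur\'an/Mantel argument on a ``cannot-be-merged'' graph; it bounds the number of maximal oriented paths in the acyclic graph $\Tr'$ obtained after discarding genus-$0$ sources and sinks. Such a path is determined by its source and sink, so the count is at most $\card(\mathcal S\cap\mathcal E)+\card(\mathcal S\setminus\mathcal E)\cdot\card(\mathcal E\setminus\mathcal S)$, and since every source and every sink of $\Tr'$ carries positive genus one optimizes $n+pq$ subject to $n+p+q=g$, which yields $\max(g,\lfloor g^2/4\rfloor)$. The extremal configuration is a complete bipartite source-to-sink pattern, which is why the Mantel number appears, but the bound is on source--sink pairs in a DAG rather than on edges of a triangle-free graph; your proposed ``three pairwise non-mergeable pieces force extra genus'' step does not correspond to anything that is true here. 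To repair the proof you would need to replace the equivalence relation by the oriented graph $\Tr$ and prove the path description of $\rot(f)$ first.
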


Note that the estimate on the number of pieces is sharp: for any $g$ there exists a homeomorphism of a surface of genus $g$ with big rotation set and whose rotation set is not the union of $\max(g, \lfloor g^2/4\rfloor) -1$ convex sets (see Subsection~\ref{paragSharpness}).

The proof of Theorem~\ref{MainTheoUnionConvex} has the three following consequences.
The first one is under a hypothesis similar to \cite[Theorem D]{zbMATH06908424} (although weaker), which improves \cite[Theorem~3]{zbMATH07282570}.

\begin{theo}\label{MainTheoConvex}
Let $f\in\Homeo_0(S)$ such that $\inte\big(\rot(f)\big)\neq\emptyset$. Then $\rot(f)$ is convex. 

More precisely, if $\rot(f)$ is not included in a hyperplane, or in a union of $2$ vector subspaces of codimension 2, then $\rot(f)$ is convex and $\rot(f) = \overline{\rote(f)}$. 
\end{theo}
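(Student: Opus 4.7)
The plan is to apply Theorem~\ref{MainTheoUnionConvex} to decompose $\rot(f)$ into a union of convex pieces all sharing the origin, and then to show that the stated non-degeneracy hypothesis forces only one such piece.

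First, I would apply Theorem~\ref{MainTheoUnionConvex} to write $\rot(f) = \bigcup_{j=1}^N \rho'_j$ with each $\rho'_j = \conv\bigl(\bigcup_{i\in I^j}\rho_i\bigr)$ convex and containing $0$. Since each piece contains $0$, it lies in the linear span $V_j := \spn(\rho'_j) \subset H_1(S,\R)$, so that $\rot(f) \subset \bigcup_{j=1}^N V_j$. For the first statement, the hypothesis $\inte(\rot(f))\neq\emptyset$ prevents $\rot(f)$ from being contained in any finite union of proper linear subspaces (which has empty interior in $H_1(S,\R)$), so at least one $V_j$ must equal $H_1(S,\R)$; in particular $\rot(f)$ is not in a hyperplane, and, as each codimension-$2$ subspace also has empty interior, not in a union of two of them either. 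Thus the first statement reduces to the second.

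Second, I would assume the refined hypothesis and argue by contradiction that $N\ge 2$ is impossible. The core of the argument is a case analysis on the dimensions and mutual positions of the $V_j$, exploiting the combinatorial structure of the index sets $I^j \subset I_{\mathrm{h}}$ that comes from Theorem~\ref{MainTheoUnionConvex}. If every $V_j$ is proper, the union $\bigcup_j V_j$ is a finite collection of linear subspaces; the non-merging of two convex sets sharing the origin implies that their spans must not both be realized inside a common convex piece, which, together with the bound $N\leq \max(g,\lfloor g^2/4\rfloor)$, forces $\rot(f)$ into either a single hyperplane or a union of exactly two codimension-$2$ subspaces, contradicting the hypothesis. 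If on the other hand some $V_j = H_1(S,\R)$, so that $\rho'_j$ has nonempty interior, I would use the hyperbolic-like forcing results of \cite{G25Cvx1} applied to the rotational horseshoes indexed by $I^j$ to show that any other piece $\rho'_k$ must be absorbed into $\rho'_j$, again yielding $N=1$.

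Third, for the equality $\rot(f)=\overline{\rote(f)}$, once convexity of $\rot(f)$ is established, Lemma~\ref{LemInclusEnsRot} gives $\rot(f)=\conv(\rote(f))$, and the upgrade to $\overline{\rote(f)}$ follows from the density of ergodic rotation vectors in each convex piece, a property of the rotational horseshoes constructed in \cite{G25Cvx1}. The hardest step will be the combinatorial analysis in the second paragraph: extracting from the proof of Theorem~\ref{MainTheoUnionConvex} the precise reason why a non-trivial decomposition $N\ge 2$ is geometrically equivalent to $\rot(f)$ being trapped in one of the two excluded low-dimensional configurations, and in particular identifying why codimension $2$ (and not some other codimension) is the sharp threshold.
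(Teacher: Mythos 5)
Your reduction of the first assertion to the second is fine, but the heart of the theorem --- showing that a decomposition into $N\ge 2$ convex pieces forces $\rot(f)$ into a hyperplane or into two codimension-$2$ subspaces --- is asserted rather than proved, and you say so yourself when you flag this ``combinatorial analysis'' as the hardest step still to be extracted. The mechanism you are missing is the oriented graph $\Tr'$ on the classes: the convex pieces $\rho'_j$ are indexed by the \emph{maximal} oriented paths of $\Tr'$, and since $\Tr'$ has no oriented loop, a maximal path is determined by its starting and ending vertices. The proof is then a three-case analysis. If $\Tr'$ is disconnected, both components carry genus, so $\rot(f)\subset W\cup W^{\perp}$ (orthogonality for $\wedge$) with both factors of codimension $\ge 2$. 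If $\Tr'$ is connected but has two starting vertices $S_{i_1},S_{i_2}$ (or two ending ones), every maximal path omits $i_1$ or $i_2$, hence every piece lies in $\bigoplus_{i\ne i_1}V_i$ or in $\bigoplus_{i\ne i_2}V_i$; these have codimension $\ge 2$ precisely because each $V_i$ attached to a positive-genus class is a symplectic subspace, so $\dim V_i\ge 2$ --- this is the answer to your question of why codimension $2$ is the sharp threshold. Otherwise there is a unique maximal path and $\rot(f)$ is convex. Your alternative branch (``if some $V_j=H_1(S,\R)$, forcing absorbs the other pieces into $\rho'_j$'') is not justified and is not how the argument goes: a full-dimensional piece does not by itself preclude other maximal paths.

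There is a second gap in your last paragraph. From convexity and Lemma~\ref{LemInclusEnsRot} you correctly get $\rot(f)=\conv(\rote(f))$, but the upgrade to $\overline{\rote(f)}$ does not follow from density of ergodic vectors ``in each convex piece'': $\rote(f)=\bigcup_{i\in I_{\mathrm h}}\rho_i$ with the $\rho_i$ lying in subspaces in direct sum, and density of ergodic vectors inside each $\rho_i$ says nothing about points of $\conv\big(\bigcup_i\rho_i\big)$ that are genuine convex combinations across different classes. Establishing $\rot(f)=\overline{\rote(f)}$ requires an additional argument (essentially that under the non-degeneracy hypothesis the rotational behaviour is carried by a single class, as in the transitive case of Proposition~\ref{CoroTheoConvexTrans}), which your sketch does not supply.
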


Note that the first conclusion of this theorem was proved for an open and dense set of homeomorphisms satisfying $\inte(\rot(f))\neq\emptyset$ in \cite{MR4578317}. This first conclusion also gives some constraints on rotation sets: it proves that some compact sets of $R^{2g}$ cannot be realised as rotation sets of higher genus surface homeomorphisms.

The second consequence is that the same conclusion holds when $f$ is transitive.

\begin{propo}\label{CoroTheoConvexTrans}
Let $f\in\Homeo_0(S)$ be transitive and with big rotation set. Then $\rot(f)$ is convex and $\rot(f) = \overline{\rote(f)}$ (and in particular $\inte \rote(f)\neq\emptyset$).
\end{propo}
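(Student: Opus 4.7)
The plan is to apply Theorem~\ref{MainTheoUnionConvex} to reduce $\rot(f)$ to a finite union of convex pieces, then use transitivity to collapse the union to a single piece, and finally invoke Theorem~\ref{MainTheoConvex} to identify $\rot(f)$ with $\overline{\rote(f)}$.

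First, Theorem~\ref{MainTheoUnionConvex} yields a decomposition
\[
\rot(f) \;=\; \bigcup_{j=1}^{k} \rho'_j, \qquad \rho'_j \;=\; \conv\Bigl(\bigcup_{i\in I^j}\rho_i\Bigr),
\]
with each $\rho'_j$ convex and containing~$0$, and $I^j\subset I_{\mathrm{h}}$. Recalling from \cite[Subsection~2.2]{G25Cvx1} the construction behind this decomposition, each index $i\in I_{\mathrm{h}}$ corresponds to a sub-surface $S_i\subset S$ on which $f$ admits a rotational horseshoe of rotation vector $\rho_i$, and the pieces $\rho'_j$ correspond to the connected components of the adjacency (heteroclinic-connection) graph formed by the $S_i$'s, as illustrated in Figure~\ref{FigExIntro}.

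Next, I would argue that transitivity of $f$ forces $k=1$. Two distinct connected components of the sub-surface graph would produce two disjoint $f$-invariant subsets of $S$ each carrying non-trivial dynamics: taking, for each component, the union of the closures of its $S_i$'s together with the forward and backward orbits of these sets yields an $f$-invariant closed set, and the absence of heteroclinic connections across components guarantees that the two constructions are disjoint. This in turn yields two disjoint non-empty $f$-invariant open neighbourhoods, which is incompatible with the existence of a dense orbit. Hence $k=1$ and $\rot(f)=\rho'_1$ is convex.

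Finally, the convexity of $\rot(f)$ combined with the big rotation set hypothesis gives $\inte(\rot(f)) = \inte\bigl(\conv(\rot(f))\bigr) \neq \emptyset$. In particular $\rot(f)$ is not contained in a hyperplane of $H_1(S,\R)$, nor in any union of two codimension-$2$ vector subspaces (such a union has empty interior). The refined conclusion of Theorem~\ref{MainTheoConvex} then applies, giving $\rot(f)=\overline{\rote(f)}$, and a fortiori $\inte(\rote(f))\neq\emptyset$. The hard part will be the transitivity step: making it rigorous requires unpacking the construction of the $S_i$'s from \cite{G25Cvx1} and checking that distinct components of their adjacency graph correspond to genuinely disjoint invariant pieces of the dynamics, so that the classical fact that transitivity forbids two disjoint non-empty invariant open sets indeed rules out $k\geq 2$.
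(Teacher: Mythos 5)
Your reduction via Theorem~\ref{MainTheoUnionConvex} and your final appeal to Theorem~\ref{MainTheoConvex} are fine, but the transitivity step --- which you yourself flag as the hard part --- rests on a mis-identification and therefore has a genuine gap. The convex pieces $\rho'_j$ do \emph{not} correspond to the connected components of the adjacency graph of the $S_i$'s: they correspond to the (maximal, oriented) paths in the graph $\Tr$, via Corollary~\ref{CoroPropRotFRotG}. A single connected component can carry several maximal oriented paths and hence several convex pieces; Figure~\ref{FigTree} in the paper is exactly such an example, with a connected graph $\Tr$ and a rotation set that is a union of $4$ convex sets. Your argument (two components would give two disjoint invariant open sets, contradicting the dense orbit) only rules out $\Tr$ being disconnected; it says nothing about a connected $\Tr$ with several sources and sinks, so it does not establish $k=1$.

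The argument that actually works, and is the one the paper uses, exploits the quantified structure of the relation $\to$ from Definition~\ref{DefToOpen2}: for every pair $i,j\in I_{\mathrm h}$ the sets $B_i^-$ and $B_j^+$ are nonempty open sets, so transitivity gives some $n\ge 0$ with $f^n(B_i^-)\cap B_j^+\neq\emptyset$, i.e.\ $\cl_i\to\cl_j$; by symmetry also $\cl_j\to\cl_i$. Since $\to$ is an order relation (\cite[Proposition~4.18]{G25Cvx1}), antisymmetry forces $i=j$, so there is a single class, $\rot(f)=\overline{\rho_i}=\overline{\rote(f)}$ is convex, and the rest follows. If you want to keep your ``disjoint invariant open sets'' framing, you must upgrade it to this pairwise statement about the $B_i^{\pm}$; ruling out disconnectedness of the graph alone is not enough.
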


Note that using \cite{guiheneuf2023hyperbolic} or \cite{alepablo}, we deduce that if $f\in\Homeo_0(S)$ is transitive and has big rotation set, then it is in the class of a pseudo-Anosov relative to a finite number of periodic orbits. 

The last consequence concerns the non wandering case.

\begin{propo}\label{CoroTheoConvexNW}
Let $f\in\Homeo_0(S)$ be nonwandering and with big rotation set. Then $\rot(f)$ is the union of at most $g$ convex sets $\rho_1,\dots,\rho_k$, with $H_1(S,\R) = \bigoplus_i \operatorname{span}(\rho_i)$. 
\end{propo}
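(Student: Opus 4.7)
The plan is to start from the decomposition provided by Theorem~\ref{MainTheoUnionConvex}, namely $\rot(f)=\bigcup_{j=1}^{K}\rho'_{j}$ with $\rho'_{j}=\conv\bigl(\bigcup_{i\in I^{j}}\rho_{i}\bigr)$ and $I^{j}\subset I_{\mathrm{h}}$, and to use the nonwandering hypothesis to refine it. In the framework of the companion paper \cite{G25Cvx1} each basic piece $\rho_{i}$ is carried by an invariant sub-surface $S_{i}\subset S$, and two basic pieces get amalgamated into the same convex set $\rho'_{j}$ precisely when the corresponding sub-surfaces are linked by a chain of heteroclinic intersections (cf.\ the schematic picture of Figure~\ref{FigExIntro}).

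The first and main step is to show that under the nonwandering assumption no such heteroclinic linking can occur. If there were an orbit escaping a neighbourhood of $S_{i}$ and accumulating on $S_{i'}$ with $i\neq i'$, then combining this heteroclinic orbit with the topological hyperbolicity of the $S_{i}$'s obtained via the Le~Calvez--Tal forcing theory underlying \cite{G25Cvx1} should produce an open disk whose forward iterates eventually never return to a neighbourhood of themselves, contradicting nonwanderingness. Consequently each $I^{j}$ must be a singleton and the decomposition reduces to $\rot(f)=\bigcup_{i\in I_{\mathrm{h}}}\rho_{i}$, one piece per sub-surface.

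With the heteroclinic glueing gone, the sub-surfaces $S_{i}$ can be taken pairwise disjoint up to their boundary curves, so that the images $V_{i}$ of the inclusion-induced maps $H_{1}(S_{i},\R)\to H_{1}(S,\R)$ satisfy $\sum_{i}V_{i}=\bigoplus_{i}V_{i}$. Since $\operatorname{span}(\rho_{i})\subset V_{i}$ and $\operatorname{span}(\rot(f))=H_{1}(S,\R)$ by the big rotation set hypothesis, this upgrades to $H_{1}(S,\R)=\bigoplus_{i}\operatorname{span}(\rho_{i})$. To count, each basic piece $\rho_{i}$, being the rotation cone of a rotational horseshoe living in a sub-surface on which the intersection form $\wedge$ is non-degenerate, should span a subspace of (even) dimension at least $2$; summing dimensions yields $2g\geq 2k$, i.e.\ $k\leq g$.

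The main obstacle I expect to be the first step: cleanly converting the nonwandering hypothesis into the absence of heteroclinic intersections between distinct basic pieces. This requires combining carefully the recurrence imposed by nonwanderingness with the forcing/horseshoe machinery of \cite{G25Cvx1, lct1, lct2} to extract a genuinely wandering open disk from any hypothetical heteroclinic connection, while ruling out degenerate configurations where recurrence might mask this wandering. Once this step is in place, the direct-sum decomposition and the bound $k\leq g$ follow from essentially homological considerations and from the structural results of \cite{G25Cvx1}.
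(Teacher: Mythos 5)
Your overall route is the same as the paper's: decompose $\rot(f)$ into the convex pieces indexed by paths in the connection structure (the paper uses Corollary~\ref{CoroPropRotFRotG} and the graph $\Tr$ rather than re-quoting Theorem~\ref{MainTheoUnionConvex}, but this is the same decomposition), show that nonwanderingness forbids connections between distinct classes so that each piece reduces to a single $\rho_i$, and then count via the direct-sum decomposition $H_1(S,\R)=\bigoplus_i V_i$ of Proposition~\ref{Prop:decomposition}. Your homological endgame (each $V_i$ is symplectic, hence of even dimension $\ge 2$, giving $k\le g$ and the direct sum of the spans) is correct and is exactly what the paper does.

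The genuine gap is the step you yourself flag: you do not actually prove that nonwandering implies the absence of the relation $\cl_i\to\cl_j$ for $i\neq j$; you only assert that a heteroclinic connection ``should produce'' a wandering open disk. As written this is not an argument, and the naive construction is not immediate: from $f^n(B_i^-)\cap B_j^+\neq\emptyset$ one gets an open set $U\subset B_i^-$ with $f^m(U)\subset B_j^+$ for all $m\ge n$, but since $B_i^-$ and $B_j^+$ need not be disjoint, this does not by itself yield $f^m(U)\cap U=\emptyset$. The paper does not build a wandering disk by hand; it invokes the equivalence of characterizations of the relation $\to$ (Theorem~B and Proposition~4.18 of \cite{G25Cvx1}, in particular the variant $\overset{*}{\to}$ and the antisymmetry of $\to$), together with the density of recurrent points, to conclude that the graph $\Tr$ is recurrent, i.e.\ has no oriented edges between distinct vertices. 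To complete your proof you would either need to carry out the wandering-disk construction carefully (controlling the overlap of $B_i^-$ and $B_j^+$ using the strict invariances $f^{-1}(B_i^-)\subset B_i^-$, $f(B_j^+)\subset B_j^+$ and the order-relation property of $\to$), or simply cite the relevant equivalences from the companion paper as the author does.
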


This proposition, combined with the example of Paragraph~\ref{paragSharpness}, shows in particular that for any $g\ge 5$, there is a homeomorphism of the closed surface of genus $g$ whose (big) rotation set is not the rotation set of a nonwandering homeomorphism. This contrasts with the torus case where any nonempty interior rotation set is realised as the rotation set of an area-preserving homeomorphism \cite{garcia2024fullychaotic}.
\bigskip

Let us come back to Theorem~\ref{MainTheoUnionConvex}. It suggests that a similar conclusion may persist if the hypothesis of having a big rotation set is removed:

\begin{conj}
There exists a map $m: \N_{\ge 2} \to\N$ such that for any surface $S$ of genus $g\ge 2$ and any $f\in\Homeo_0(S)$, the rotation set $\rot(f)$ is the union of at most $m(g)$ convex sets. Moreover, $m(g)\le 2^{5g-5}$.
\end{conj}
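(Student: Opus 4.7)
The natural approach is induction on $d := \dim \operatorname{span}(\rot(f))$. The base case $d = 2g$ is exactly Definition~\ref{DefBig}, so Theorem~\ref{MainTheoUnionConvex} provides a decomposition into at most $\max(g,\lfloor g^2/4\rfloor)$ convex pieces, comfortably below $2^{5g-5}$ for all $g\ge 2$.

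For the inductive step, when $d < 2g$, the plan is to find an $f$-invariant multi-curve $\gamma$ cutting $S$ into essential proper subsurfaces $\Sigma_1,\dots,\Sigma_k$ of genera $g_1,\dots,g_k$ with $\sum g_i \le g$. The rotation set of $f|_{\Sigma_i}$ would sit inside $H_1(\Sigma_i,\R)$, and the inclusions $\Sigma_i \hookrightarrow S$ would let one express $\rot(f)$ as a union of images of the $\rot(f|_{\Sigma_i})$, up to translation by homology classes of components of $\gamma$. By induction each $\rot(f|_{\Sigma_i})$ splits into at most $m(g_i)$ convex sets, and a combinatorial bookkeeping over the finitely many topological types of reducing multi-curves should yield the numerical bound $m(g) \le 2^{5g-5}$ (matching up to a constant factor the $2^{5g-3}$ bound obtained in \cite{MR4578317} for fitted Axiom A diffeomorphisms).

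To produce the reducing multi-curve, one would feed the Le Calvez--Tal forcing theory \cite{lct1,lct2} and the convex-analytic machinery of \cite{G25Cvx1} with the assumption that $\rot(f)$ lies in a proper subspace of $H_1(S,\R)$: every $f$-invariant measure then has rotation vector in that subspace, so by ergodic decomposition and Birkhoff's theorem the homological displacements along typical orbits remain bounded transversally to it, which should force a geometric obstruction in the form of an invariant essential annulus or subsurface (at least for some iterate of $f$). This is essentially the mechanism underlying Proposition~\ref{CoroTheoConvexNW} in the nonwandering case.

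The main obstacle is exactly this reduction step in full generality. In the absence of any recurrence hypothesis, wandering orbits and drifting ``translation-like'' components can contribute to $\rot(f)$ without producing any literal invariant subsurface: the Matsumoto-type tree examples, together with generic wandering dynamics, suggest that such a subsurface can only be expected after passing to suitable iterates or to ``virtual'' subsurfaces constructed from maximal isotopies and their foliations. I therefore expect any real attack on the conjecture to require a substantial extension of the forcing machinery to wandering dynamics, or else a purely topological constraint on which compact subsets of $H_1(S,\R)$ can arise as rotation sets in $\Homeo_0(S)$; short of either, even a conditional proof seems out of reach with present tools, and the bound $2^{5g-5}$ itself may need to be revisited.
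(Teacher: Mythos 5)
The statement you are addressing is stated in the paper as a \emph{conjecture}: the paper offers no proof of it, only supporting evidence (the big--rotation--set case settled by Theorem~\ref{MainTheoUnionConvex}, and the open-dense case of \cite{MR4578317}). Your text is likewise not a proof --- you say so yourself in the last paragraph --- so there is nothing here to certify as correct; the honest conclusion is that both you and the paper leave the general statement open. Your identification of the genuine obstruction (the absence of any recurrence hypothesis, so that wandering, drifting behaviour contributes to $\rot(f)$ without producing an invariant reducing subsurface) is accurate and matches the reason the paper only states this as a conjecture.

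That said, two concrete points in your sketch would need repair even as a programme. First, the induction on subsurfaces is ill-posed as written: $f|_{\Sigma_i}$ only makes sense if $\Sigma_i$ is $f$-invariant, the pieces are surfaces with boundary (and possibly of genus $0$ or $1$), so the inductive hypothesis for closed surfaces of genus $\ge 2$ does not apply to them, and $\rot(f)$ is \emph{not} a union of the rotation sets of the pieces --- the paper's own Theorem~\ref{MainTheoUnionConvex} shows that the convex components are convex hulls $\conv(\bigcup_{i\in I^j}\rho_i)$ mixing contributions from several pieces, so cross-terms between subsurfaces must be accounted for. Second, the exponent $5g-5$ in the conjectured bound is not meant to arise from genus-additive bookkeeping over reducing multi-curves: as the paper indicates, it comes from the bound $\card(I)\le 5g-5$ on the number of equivalence classes of ergodic measures in \cite[Theorem~F]{alepablo}, the expectation being that each convex component of $\rot(f)$ is a convex hull of a union of the sets $\rho_i$ indexed by a subset of $I$ (as happens via the graph $\Tr$ in the big--rotation--set case), whence at most $2^{5g-5}$ components. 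Any serious attack should therefore aim at extending the class/graph structure of \cite{G25Cvx1} beyond the big--rotation--set hypothesis rather than at a genus induction.
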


Of course, if this conjecture is true, one can wonder what is the optimal map $m(g)$. The $5g-5$ constant comes from \cite[Theorem~F]{alepablo}. This conjecture is supported by this present work, which proves this conjecture in a special case, as well as the study \cite{MR4578317} of the shape of rotation sets for an open and dense set of homeomorphisms (and this set is understood \emph{via} Axiom A diffeomorphisms). In this work the authors prove that if it exists, the optimal bound $m(g)$ should be at least $2^{\lfloor g/2\rfloor}$ \cite[Section~10.4]{MR4578317}.

\subsection*{Existence of $f$-invariant open subsets}

When the rotation set can be split into supplementary pieces, there are associated pairwise disjoint open and $f$-invariant sets.

\begin{propo}\label{PropExistBj}
Let $f\in\Homeo_0(S)$ with big rotation set, and suppose that there are linear subspaces $(V_j)_{1\le j\le k}$ of $H_1(S,\R)$ such that $H_1(S,\R) = \bigoplus_j V_j$ and $\rot(f)\subset \bigcup_j V_j$.
Then there exists a family $B_1,\dots, B_k\subset S$ of pairwise disjoint $f$-invariant open sets such that for any $1\le j\le k$ one has $\rot(f|_{B_j} ) = V_j\cap \rot(f)$ and that, denoting $K$ the complement of the union of the $B_j$, one has $\rot(f|_{K} ) = \{0\}$.
\end{propo}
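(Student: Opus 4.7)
The plan is to combine Theorem~\ref{MainTheoUnionConvex} with the construction of invariant open sets provided by \cite{G25Cvx1}, using the direct sum hypothesis to group the convex pieces according to which $V_j$ each one belongs to.

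The starting point is a purely convex-geometric observation: any convex subset $C$ of $\bigcup_{j=1}^k V_j$, where the $V_j$ are in direct sum, is entirely contained in a single $V_j$. Indeed, if $v \in C \cap V_\ell$ and $w \in C \cap V_m$ with $\ell \neq m$, the sets $A_j = \{t \in [0,1] : tv + (1-t)w \in V_j\}$ are closed and cover $[0,1]$, so by Baire one of them contains an open interval. By linearity of the direct sum projections, this forces both $v$ and $w$ to lie in that $V_j$, and the direct sum property then yields $v = 0$ or $w = 0$. Applying this to each convex piece $\rho'_\ell$ of Theorem~\ref{MainTheoUnionConvex} (and hence to each $\rho_i \subset \rho'_\ell$), one obtains a map $\sigma : I_{\mathrm{h}} \to \{1, \ldots, k\}$ such that $\rho_i \subset V_{\sigma(i)}$ for every $i \in I_{\mathrm{h}}$.

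Next I would invoke the construction underlying Theorem~\ref{MainTheoUnionConvex} in \cite{G25Cvx1}, which associates to each index $i \in I_{\mathrm{h}}$ a pairwise disjoint $f$-invariant open set $B_i^{\mathrm{h}} \subset S$ whose restricted dynamics realises $\rho_i$ (and only this). For each $j \in \{1, \ldots, k\}$ I then set
\[B_j := \bigcup_{i \in \sigma^{-1}(j)} B_i^{\mathrm{h}}.\]
By construction, $B_j$ is open and $f$-invariant, and the family $(B_j)_{1 \le j \le k}$ is pairwise disjoint (as the $B_i^{\mathrm{h}}$ are). The inclusion $\rot(f|_{B_j}) \subseteq V_j \cap \rot(f)$ follows from a second use of the convex observation, applied to convex combinations of rotation vectors realised inside $B_j$ via heteroclinic connections between its pieces, which must remain in $\rot(f) \subset \bigcup_\ell V_\ell$. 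The reverse inclusion is immediate since each $\rho_i$ with $\sigma(i) = j$ is realised in $B_i^{\mathrm{h}} \subset B_j$, and by the first step $V_j \cap \rot(f)$ is precisely the union of these $\rho_i$'s. Finally, for $K = S \setminus \bigcup_j B_j$, every ergodic invariant measure supported on $K$ has rotation vector $0$ (all nonzero rotational behaviour is captured by some $B_i^{\mathrm{h}}$), so by Lemma~\ref{LemInclusEnsRot} one concludes $\rot(f|_K) = \{0\}$.

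The main obstacle is the precise extraction from \cite{G25Cvx1} of the invariant open sets $B_i^{\mathrm{h}}$, together with the two non-trivial facts that they are pairwise disjoint and that their common complement carries only trivial rotational behaviour. This is the deep input hidden in the companion paper, ultimately resting on the bounded-deviations theorem of \cite{paper1PAF} and the forcing theory of \cite{lct1, lct2}. Once these ingredients are available, the present proposition reduces to the convex-geometric argument above and a combinatorial regrouping of the pieces.
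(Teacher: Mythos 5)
Your convex-geometric observation is correct, and it is indeed the reason each convex piece of $\rot(f)$ (hence each connected component of the graph $\Tr$) can be assigned to a single $V_j$. The gap lies in the input you assume from \cite{G25Cvx1}: there is in general \emph{no} family of pairwise disjoint $f$-invariant open sets $B_i^{\mathrm h}$, one per class $i\in I_{\mathrm h}$, with $\rot(f|_{B_i^{\mathrm h}})=\rho_i$ and trivial rotation on the common complement. If $\cl_i\to\cl_{i'}$ for two distinct classes --- which the hypothesis of the proposition allows whenever $\rho_i$ and $\rho_{i'}$ lie in the same $V_j$, e.g.\ already for $k=1$ --- then by Corollary~\ref{CoroPropRotFRotG} the set $\conv(\rho_i\cup\rho_{i'})$ is contained in $\rot(f)$ and is realised by orbits accumulating on both classes; such an orbit would have to enter both $B_i^{\mathrm h}$ and $B_{i'}^{\mathrm h}$ (each being open, invariant and of full measure for the corresponding class), contradicting disjointness plus invariance, and it cannot lie in $K$ either since its rotation vector is nonzero. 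What \cite[Proposition~4.17]{G25Cvx1} actually provides (see Subsection~\ref{SubSecGrafT}) are sets $B_i^o$ whose unions $B_\alpha=\bigcup_{i\in\alpha}B_i^o$ over the \emph{connected components} $\alpha$ of the graph are pairwise disjoint and invariant; the correct definition is $B_j=\bigcup_\alpha B_\alpha$ over the components whose classes are sent to $V_j$, and $\rot(f|_K)=\{0\}$ is then Lemma~\ref{RotK0}.

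A second, related error: you assert that $V_j\cap\rot(f)$ is precisely $\bigcup_{\sigma(i)=j}\rho_i$. It is not: by Corollary~\ref{CoroPropRotFRotG}, $\rot(f)$ contains the convex hulls $\conv\big(\{\rho_i\}_{i\in\mathrm p}\big)$ along paths $\mathrm p$ of $\Tr$, which are in general strictly larger than the union of the $\rho_i$ they involve (compare Figure~\ref{FigTree}, where the convex pieces have dimension $4$ while each $\rho_i$ has dimension $2$). So even granting your sets $B_i^{\mathrm h}$, the union $B_j=\bigcup_{\sigma(i)=j}B_i^{\mathrm h}$ of pairwise disjoint invariant sets would only realise $\bigcup_{\sigma(i)=j}\rho_i$ and would miss these convex hulls, so the inclusion $V_j\cap\rot(f)\subseteq\rot(f|_{B_j})$ would fail. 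Both problems disappear once the grouping is performed at the level of connected components of $\Tr$ rather than of individual classes.
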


Note that this proposition can be applied to the case of nonwandering homeomorphisms with big rotation set thanks to Proposition~\ref{CoroTheoConvexNW}: in this case we get $\rot(f|_{B_i} ) = \rho_i$. 

This proposition is a direct application of \cite[Proposition~4.17]{G25Cvx1}, which also gives the existence of positively/negatively invariant sets for any big rotation set homeomorphism. 

One may compare Proposition~\ref{PropExistBj} with the main theorem of \cite{GlCPT} that also states, for an area-preserving zero entropy homeomorphism, the existence of an $f$-invariant decomposition of $S$ into open sets whose union contains all the rotational behaviour of $f$.

\subsection*{Bounded deviations for homeomorphisms with big rotation set}

The topic of bounded deviations for torus homeomorphisms is now quite developed, see \cite{zbMATH06304088, zbMATH06296542, zbMATH06914177, zbMATH07548721, zbMATH07488214, zbMATH07867510}. Some of these results deal with homeomorphisms whose rotation set has nonempty interior: \cite{zbMATH06425076, lct1}. However, only few results are known for higher genus surfaces: the only references we know are \cite{zbMATH07282570}, that treats the case of $C^{1+\alpha}$ diffeomorphisms under the hypothesis of existence of what the authors call a \emph{fully essential system of curves}, and \cite{lellouch} which concerns homeomorphisms such that 0 belongs to the interior of their rotation set. A study of bounded deviations in the homotopical sense is made in \cite{paper1PAF, paper2PAF}; we will use this work here. 

Let us first set a notation.
We define $a_x^n\in\G$ as the product of the $a_{f^i(x)}$ for $0\le i\le n-1$, hence
\begin{equation}\label{Defaxn}
[a_x^n] = \sum_{i=0}^{n-1}[a_{f^i(x)}].
\end{equation}
Let us recall a theorem of Lellouch, which already improves \cite[Theorem~4]{zbMATH07282570}:

\begin{theorem}[{\cite[Th\'eor\`eme G]{lellouch}}]\label{TheoBndedHomoLellou}
Let $f\in\Homeo_0(S)$ such that $0\in\inte\big(\conv(\rot(f))\big)$. Then the deviations with respect to $\conv(\rot(f))$ are bounded: there exists $L>0$ such that for any $x\in S$ and any $n\in\N$, we have:
\[d\big([a_x^n],\, n\conv(\rot(f))  \big)\le L.\]
\end{theorem}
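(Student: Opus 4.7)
The plan is to argue by contradiction, reduce to a single obstructing direction, and then combine a soft ergodic-theoretic argument with the homotopical bounded deviations of \cite{paper1PAF, paper2PAF}. Assume deviations are unbounded: by a separating-hyperplane argument there exist $x_k\in S$, $n_k\in\N$ and unit covectors $\phi_k\in H_1(S,\R)^*$ with
\[
\Delta_k\;:=\;\phi_k\big([a_{x_k}^{n_k}]\big)\;-\;n_k\max_{\rho\in\rot(f)}\phi_k(\rho)\;\longrightarrow\;+\infty.
\]
Up to extraction, $\phi_k\to\phi$ with $\|\phi\|=1$, and the empirical measures $\mu_k=\tfrac{1}{n_k}\sum_{i=0}^{n_k-1}\delta_{f^i(x_k)}$ converge weak-$*$ to an $f$-invariant $\mu$. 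Since the integrand $y\mapsto[a_y]$ is bounded, the rotation vector is weak-$*$ continuous, hence $\rho(\mu)=\lim_k\tfrac{1}{n_k}[a_{x_k}^{n_k}]$. Passing to the limit gives $\phi(\rho(\mu))\ge\max_{\rho\in\rot(f)}\phi(\rho)$; since $\rho(\mu)\in\rot(f)$, equality holds. This already excludes linear deviations and forces $n_k\to\infty$ with $\Delta_k=o(n_k)$, but this sublinear bound is strictly weaker than the statement.

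The core of the argument is to upgrade this sublinear control into a uniform one. The interior hypothesis furnishes, via Carath\'eodory applied to the convex set $\conv(\rote(f))$, finitely many ergodic measures $\mu_1,\dots,\mu_N\in\Me(f)$ whose rotation vectors span $H_1(S,\R)$ and whose convex hull contains $0$ in its interior. For each such $\mu_i$, \cite{paper1PAF,paper2PAF} provide a homotopical bounded deviations statement at the universal cover $\wt S=\Hy^2$: generic lifted orbits shadow, up to a universal hyperbolic distance $L_0$, a geodesic with a prescribed endpoint in $\partial\Hy^2$ determined by $\rho(\mu_i)$. Pulling back the obstruction $(x_k,n_k)$ to $\wt S$, the excess $\Delta_k$ means that the lifted segment $\wt f^{\,n_k}(\wt{x_k})$ drifts in a homology direction that is \emph{incompatible} with the family of homotopical traps attached to the $\mu_i$.

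The decisive step---and the main obstacle---is the translation from the homotopical trap (distance to a geodesic in $\Hy^2$) into the homological bound (distance to $n\conv(\rot(f))$), since the abelianisation $\pi_1(S)\to H_1(S,\Z)$ a priori collapses homotopical excursions. To bridge the gap I would invoke the Le Calvez--Tal forcing theory \cite{lct1,lct2} in the following guise: once $\Delta_k$ exceeds a universal constant $L$ depending only on the $\mu_i$, the deviating orbit must admit a transverse intersection (with respect to a maximal isotopy foliation) with a homotopical trap coming from some $\mu_j$ chosen so that $\phi(\rho(\mu_j))<\max_{\rho\in\rot(f)}\phi(\rho)$. Forcing then yields a recurrent orbit whose rotation vector has $\phi$-component strictly greater than $\max_{\rho\in\rot(f)}\phi(\rho)$, contradicting the definition of $\rot(f)$. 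Uniformity of $L$ over all $\phi$ is obtained by compactness on the dual sphere, using Proposition~\ref{PropContinuityRot} together with the finiteness of the extremal structure of $\rot(f)$ provided by Theorem~\ref{MainTheoUnionConvex}.
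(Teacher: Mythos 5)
First, a framing point: the statement you are proving is quoted verbatim from Lellouch (Th\'eor\`eme G of \cite{lellouch}); the paper gives no proof of it, only the citation, and later proves a strictly stronger result (Proposition~\ref{PropBnddDevConvHull}, with the hypothesis weakened to $\inte(\conv(\rot(f)))\neq\emptyset$) by a different, constructive route. So your attempt is measured against that proof and against the architecture of Lellouch's original argument.

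Your first two paragraphs are sound but, as you acknowledge, only yield $\Delta_k=o(n_k)$, which is strictly weaker than the claim. The genuine gap is the third paragraph: the assertion that once $\Delta_k$ exceeds a universal constant the orbit ``must admit a transverse intersection with a homotopical trap'' and that ``forcing then yields a recurrent orbit whose rotation vector has $\phi$-component strictly greater than $\max_\rho\phi(\rho)$'' \emph{is} the theorem, and you supply no mechanism for it. The obstruction you correctly name --- that abelianisation collapses homotopical excursions, so shadowing a geodesic in $\Hy^2$ up to distance $L_0$ does not control $d([a_x^n], n\conv(\rot(f)))$ --- is not resolved by invoking forcing theory in the abstract. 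What is actually needed, and what both Lellouch and the paper's proof of Proposition~\ref{PropBnddDevConvHull} provide, is a quantitative crossing statement: if the lifted isotopy path $I_{\wt x}^{[0,n]}$ crosses sufficiently many pairwise disjoint, coherently oriented lifts of a suitable closed geodesic, then there is a periodic orbit of controlled period whose deck transformation approximates the homology of that sub-path (this is \cite[Theorem~2.7]{G25Cvx1} in the paper). One then cuts $[a_x^n]$ into sub-paths $\beta_k$ according to crossings with a fixed finite family $\gamma_1,\dots,\gamma_{2g}$ of geodesics dual to a basis of $H_1(S,\R)$, replaces each $[\beta_k]$ by $[R_k]$ with $[R_k]/q_k\in\rot(f)$ and $\sum_k q_k=n+O(1)$, and concludes that $[a_x^n]$ lies within bounded distance of $n\conv(\rot(f))$ directly --- no contradiction argument, no separating hyperplane. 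Your sketch contains no analogue of this crossing/shadowing step, which is where all the work is.

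A secondary but real problem: within the paper's logical order, your closing appeal to Theorem~\ref{MainTheoUnionConvex} and Proposition~\ref{PropContinuityRot} for uniformity in $\phi$ would be circular (those results sit downstream of the bounded-deviation machinery), and neither supplies ``finiteness of the extremal structure of $\rot(f)$'' in a usable form. In the actual proofs uniformity is automatic: all constants depend only on the fixed geodesics $\gamma_j$, whose classes give a basis of the dual via $v\mapsto v\wedge[\gamma_j]$, not on a compactness argument over the dual sphere.
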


We improve his result by weakening its hypothesis and strengthening its conclusion:

\begin{theo}\label{TheoBndedHomo}
Let $f\in\Homeo_0(S)$ such that $\inte\big(\conv(\rot(f))\big)\neq\emptyset$ (\emph{i.e.}~with big rotation set). Then the deviations with respect to $\rot(f)$ are bounded: there exists $L>0$ such that for any $x\in S$ and any $n\in\N$, we have :
\[d\big([a_x^n],\, n\rot(f)  \big)\le L.\]
\end{theo}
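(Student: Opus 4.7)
The proof combines Theorem~\ref{MainTheoUnionConvex} with the bounded deviation machinery developed in \cite{paper1PAF, G25Cvx1}. It improves on Lellouch's Theorem~\ref{TheoBndedHomoLellou} in two essentially independent ways: weakening the hypothesis from $0\in\inte(\conv(\rot(f)))$ to merely $\inte(\conv(\rot(f)))\neq\emptyset$, and strengthening the target set from $\conv(\rot(f))$ to $\rot(f)$. I would treat these two improvements one at a time.

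First, I would establish bounded deviations with respect to $\conv(\rot(f))$, removing the interior hypothesis on $0$. Lellouch's proof, which goes via the forcing theory of Le Calvez--Tal \cite{lct1,lct2} and the homotopical bounded deviations of \cite{paper1PAF}, really only uses that $\inte(\conv(\rot(f)))$ is nonempty; the role of $0$ being interior is merely to simplify the dual picture. Picking any interior point $v_0\in\inte(\conv(\rot(f)))$, one can run the argument relative to $v_0$: the shifted cocycle $[a_x^n]-nv_0$ is controlled by the translated convex set $\conv(\rot(f))-v_0$, which does contain $0$ in its interior. This is exactly the type of extension carried out in the companion paper \cite{G25Cvx1}.

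Second, to replace $\conv(\rot(f))$ by $\rot(f)$ in the conclusion, I would apply Theorem~\ref{MainTheoUnionConvex} to decompose $\rot(f)=\bigcup_{j=1}^{k}\rho'_j$. The first step gives that $[a_x^n]$ lies within bounded distance $L$ of $n\conv(\rot(f))$; the goal is then to show that it actually lies within bounded distance of $n\rho'_j$ for some $j=j(x,n)$. The key structural input from \cite{G25Cvx1} is that each $\rho'_j=\conv(\bigcup_{i\in I^j}\rho_i)$ corresponds to a maximal cluster of basic pieces $\rho_i$ linked by heteroclinic connections; a single orbit segment can only accumulate homological displacement coming from pieces $\rho_i$ whose indices lie in a common $I^j$, with uniformly bounded transition costs between successive visits. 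The resulting total displacement then lies in $n\rho'_j+O(1)$.

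The main obstacle is the second step. Qualitatively, any accumulation point of $[a_x^n]/n$ is an element of $\rot(f)$ and hence belongs to some $\rho'_j$, but one needs a uniform bound $L$ independent of $(x,n)$. The real work is to control the transition costs between pieces uniformly, and to rule out the pathology of an orbit segment whose cocycle ``interpolates'' between two non-adjacent $\rho'_j$'s along a direction genuinely outside $\rot(f)$ but inside $\conv(\rot(f))$. This uniformity is precisely what the axiom-A-type structure of \cite{G25Cvx1} — heteroclinic forcing and bounded return estimates associated with each basic piece $\rho_i$ — provides, and plugging it into the outcome of the first step should close the argument.
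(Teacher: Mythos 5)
Your two-step architecture (first bound deviations with respect to $\conv(\rot(f))$, then use Theorem~\ref{MainTheoUnionConvex} to pass from the convex hull to $\rot(f)$ itself) is exactly the paper's, but your first step has a genuine gap. You propose to deduce the convex-hull bound from Lellouch's Theorem~\ref{TheoBndedHomoLellou} by translating by an interior point $v_0$ and working with the shifted cocycle $[a_x^n]-nv_0$. On a higher-genus surface there is no dynamics behind that shifted cocycle: the preferred lift $\wt f$ is the \emph{unique} lift commuting with $\G$ (the center of $\G$ is trivial for $g\ge 2$), so $0$ is not a normalization one is free to move --- it is the rotation vector pinned down by the contractible fixed points and by the fact that $\wt f$ is the identity on $\partial\wt S$, and Lellouch's forcing-theoretic argument is anchored there. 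The claim that his proof "really only uses" nonempty interior is unsupported, and the paper does not argue this way: it reproves the convex-hull bound from scratch (Proposition~\ref{PropBnddDevConvHull}) by cutting the isotopy path $I_{\wt x}^{[0,n]}$ at its crossings with finitely many tracking geodesics $\gamma_j$, applying the realization theorem of \cite{G25Cvx1} to replace each subpath $\beta_k$ by a deck transformation $R_k$ of a periodic orbit with $[R_k]/q_k\in\rot(f)$ and $\sum_k q_k\approx n$, and summing. Without some such direct argument your step 1 does not go through.

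Your step 2 identifies the right difficulty (ruling out cocycles interpolating between non-connected pieces) but stops short of the two concrete ingredients that resolve it. The first is Proposition~\ref{PropDevImpliesConnec}: writing $[a_x^n]=\sum_i v_i$ along $H_1(S,\R)=\bigoplus_i V_i$, if $\|v_i\|\ge L$ for all $i$ in a set $I_L$, then counting crossings of the tracking geodesics and using the invariant sets $B_i^\pm$ produces an ordering $i_1\prec\dots\prec i_\ell$ of $I_L$ with $\cl_{i_1}\to\cdots\to\cl_{i_\ell}$; hence $I_L$ lies along a path of $\Tr$, i.e.\ in a common $I^j$, and $[a_x^n]$ is within $L$ of $V'_j=\spn(\rho'_j)$. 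The second is the elementary but necessary Lemma~\ref{LemInterVois}: being $R$-close to $\bigcup_j V'_j$ \emph{and} $R$-close to $n\conv(\rot(f))$ implies being $3R$-close to $n\rot(f)$, which is proved by projecting onto $V'_j$ parallel to the complementary $V_i$'s and using that $\rho'_j$ is star-shaped at $0$ (so the projection of a point of $\conv(\rot(f))$ lands in $\overline{\rho'_j}$). Your assertion that the structural input alone puts the displacement in $n\rho'_j+O(1)$ is too strong --- lying near $V'_j$ says nothing about lying near $n\rho'_j$ without the convex-hull bound --- and it is precisely this combination lemma that closes the argument by contraposition.
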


The hypothesis $\inte\big(\conv(\rot(f))\big)\neq\emptyset$ is necessary as there are counterexamples due to Koropecki and Tal \cite{zbMATH06345227} (see also \cite[Section~7.2]{pa}).
However, one may ask the following:

\begin{question}
Is it possible to get bounded deviations results with respect to the homological rotation set under the additional hypothesis that the fixed point set is inessential \cite{zbMATH06908424, zbMATH06997839}?
\end{question}

It is now classical that bounded deviation results imply some versions of Boyland conjecture (see \cite{zbMATH06425076, lct1, zbMATH07282570,lellouch}). Here is what implies Theorem~\ref{TheoBndedHomo} under our hypotheses (improving \cite[Theorem~5]{zbMATH07282570} and \cite[Th\'eor\`eme~I]{lellouch}):

\begin{corol}\label{CoroBoyland1}
Let $f\in\Homeo_0(S)$ preserving a Borel probability measure $\mu$ that has total support. If $f$ has big rotation set, then $\rho(\mu) \in\inte\big(\conv(\rot(f))\big)$.
\end{corol}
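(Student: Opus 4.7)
The plan is to argue by contradiction: suppose $\rho(\mu)\in\partial\conv(\rot(f))$ (the inclusion $\rho(\mu)\in\conv(\rot(f))$ follows from Lemma~\ref{LemInclusEnsRot} applied via the ergodic decomposition of $\mu$). Since $\conv(\rot(f))$ has nonempty interior, pick a nonzero linear functional $\phi\in H_1(S,\R)^*$ and $M\in\R$ with $\phi(v)\le M$ for every $v\in\conv(\rot(f))$ and $\phi(\rho(\mu))=M$. By Theorem~\ref{TheoBndedHomo} there exists $L>0$ such that
\[\phi([a_x^n])\le nM+L\|\phi\|\quad\text{for every }x\in S,\ n\in\N.\]

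First I would use the ergodic decomposition $\mu=\int\mu_y\dd\mu(y)$ together with Birkhoff's theorem to show that the rotation vector of almost every ergodic component lies on the supporting face. Indeed, for $\mu$-a.e.\ $y$, $(1/n)\phi([a_y^n])\to\phi(\rho(\mu_y))$, and the pointwise upper bound $\phi(\rho(\mu_y))\le M$ combined with $\int\phi(\rho(\mu_y))\dd\mu(y)=\phi(\rho(\mu))=M$ forces $\phi(\rho(\mu_y))=M$ for $\mu$-a.e.\ $y$. Equivalently, for $\mu$-a.e.\ $z$, $(1/n)\phi([a_z^n])\to M$.

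Next, since $\conv(\rot(f))$ has nonempty interior, pick $v_0\in\rot(f)$ with $\varepsilon:=M-\phi(v_0)>0$. By the definition of $\rot(f)$ there are $y_0\in S$ and arbitrarily large $N$ with $\phi([a_{y_0}^N])\le N(M-\varepsilon/2)$; since $z\mapsto[a_z^N]$ is locally constant on a dense open subset of $S$, one may choose a nonempty open neighborhood $U$ of such a $y_0$ on which $\phi([a_z^N])\le N(M-\varepsilon/2)$. By total support of $\mu$, $\mu(U)>0$.

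The main contradiction comes from combining the two bounds along the orbit. Writing $[a_z^{kN}]=\sum_{j=0}^{k-1}[a_{f^{jN}(z)}^N]$ and estimating each summand by $N(M-\varepsilon/2)$ when $f^{jN}(z)\in U$ and by $NM+L\|\phi\|$ otherwise, one gets
\[\phi([a_z^{kN}])\le k_g(z)\,N(M-\varepsilon/2)+(k-k_g(z))(NM+L\|\phi\|),\]
with $k_g(z)=\#\{0\le j<k : f^{jN}(z)\in U\}$. Refining the ergodic decomposition with respect to $f^N$ produces an $f^N$-ergodic measure $\nu$ (appearing in the decomposition of some $\mu_y$) with $\nu(U)>0$, and Birkhoff's theorem for $(f^N,\nu)$ gives $k_g(z)/k\to\nu(U)$ for $\nu$-a.e.\ $z$, so
\[\limsup_{k\to\infty}\frac{\phi([a_z^{kN}])}{kN}\le M-\frac{\nu(U)\varepsilon}{2}+(1-\nu(U))\frac{L\|\phi\|}{N}.\]
This contradicts $(1/n)\phi([a_z^n])\to M$ from the previous step once $N$ is large enough to make $L\|\phi\|/N<\nu(U)\varepsilon/4$.

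The delicate point is that the threshold on $N$ depends on $\nu(U)$, which itself depends on $N$ through the construction of $U$. I would circumvent this by first fixing a point $y_0$ in the support of some ergodic component of $\mu$ (possible by full support), then choosing $U$ to contain a fixed small neighborhood of $y_0$ and letting $N$ grow along the subsequence of good Birkhoff times at $y_0$ for the approximation of $v_0$; this keeps $\mu(U)$ (and hence $\nu(U)$ for a well-chosen component) bounded below while $N\to\infty$.
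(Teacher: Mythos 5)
Your overall strategy (separating hyperplane at $\rho(\mu)$, the one-sided bound $\phi([a_x^n])\le nM+L\|\phi\|$ from Theorem~\ref{TheoBndedHomo}, and the ergodic-decomposition step showing $\phi(\rho(\mu_y))=M$ for $\mu$-a.e.\ $y$) is sound up to and including the second paragraph. The paper itself does not give this argument: it simply observes that the statement follows from bounded deviations by the argument of Lellouch, whose mechanism is different from yours (see below).

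The gap is in the final, quantitative step, and you have correctly located it but not repaired it. For each $N$ you obtain an open set $U=U_N$ on which $\phi([a_z^N])\le N(M-\varepsilon/2)$, and the contradiction requires $\nu(U_N)\gtrsim L\|\phi\|/N$. But $U_N$ is a neighbourhood of a point $y_0(N)$ whose size is governed by the modulus of continuity of $f,\dots,f^N$, so it shrinks as $N\to\infty$, and for a general fully supported $\mu$ there is no lower bound on the measure of small balls; the computation only yields $\mu(U_N)=O(1/N)$, which is not a contradiction. Your proposed fix does not close this: it presupposes a \emph{fixed} point $y_0\in\supp(\mu)$ admitting arbitrarily large times $N$ with $\frac1N\phi([a_{y_0}^N])\le M-\varepsilon/2$, i.e.\ a point of $\supp(\mu)$ whose rotation set $\rho(y_0)$ meets $\{\phi\le M-\varepsilon/2\}$. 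Nothing guarantees such a point exists --- the sequences $(x_k,n_k)$ realizing $v_0$ in the definition of $\rot(f)$ have no relation to $\supp(\mu)$, and indeed $\mu$-a.e.\ point has $\frac1n\phi([a_\cdot^n])\to M$ by your own second paragraph. Moreover, even granting such a $y_0$, the set where $\phi([a_\cdot^N])\le N(M-\varepsilon/2)$ still only contains a neighbourhood of $y_0$ of size shrinking with $N$; you cannot force it to contain a neighbourhood of fixed size.

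The standard way to finish (this is Lellouch's argument, which the paper invokes) is to upgrade the one-sided deviation bound to a two-sided one rather than to contradict the Birkhoff limit directly. Set $\psi=\phi([a_\cdot])-M$, so $S_n\psi\le L\|\phi\|$ everywhere and $\int\psi\dd\mu_y=0$ for $\mu$-a.e.\ ergodic component. By Atkinson's theorem, $\mu_y$-a.e.\ point $z$ satisfies $\liminf_m|S_m\psi(z)|=0$; the set $G$ of such points has full $\mu$-measure, hence is dense by total support. For any $x$, $n$, approximating $x$ by $z\in G$ and writing $S_n\psi(z)=S_m\psi(z)-S_{m-n}\psi(f^n(z))\ge -1-L\|\phi\|$ for a suitable $m>n$, then using continuity of $S_n\psi$ for fixed $n$, gives $S_n\psi\ge -1-L\|\phi\|$ everywhere. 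Passing to the limit along the sequences defining $\rot(f)$ yields $\phi\ge M$ on $\rot(f)$, hence $\conv(\rot(f))\subset\{\phi=M\}$, contradicting the nonempty interior. I recommend you restructure the end of your proof along these lines.
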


\subsection*{Realization of rotation vectors}

By Proposition~\ref{Prop:decomposition} (due to \cite{guiheneuf2023hyperbolic}), if $f$ has a big rotation set, then $\rote(f) = \bigcup_{i\in I_{\mathrm h}} \rho_i$, with $\card(I_{\mathrm h}) \le 2g-2$, and $\rho_i$ is such that $\conv(\rho_i)\setminus\rho_i \subset \partial \conv(\rho_i)$. This allows to apply \cite[Proposition~A]{G25Cvx1} to any piece $\rho_i$ of the set $\rote(f)$; this gives the following, which improves \cite[Theorem~3]{zbMATH07282570}:

\begin{propo}\label{PropRealByCompact2}
Let $f\in\Homeo_0(S)$ with big rotation set and $i\in I_{\mathrm h}$. Then:
\begin{enumerate}
\item For any $\rho\in \rho_i$, there exists $x\in S$, such that $\rho(x) = \rho$.
\item For $\rho\in \inte(\rho_i)$ (the interior is taken inside the span of the convex set), there exists a compact $f$-invariant set $K_\rho\subset S$ and $L_\rho>0$ such that for any $x\in K_\rho$ and any $n\in\N$,
\[d\big([a_x^n],\, n\rho \big)\le L_\rho.\]
\item If $C\subset \inte(\rho_i)$ is a compact connected set, then there exists $x\in S$ such that $\rho(x) = C$. 
\end{enumerate}
\end{propo}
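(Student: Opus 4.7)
The plan is to observe that the statement is, as announced in the paragraph preceding it, essentially a packaged application of \cite[Proposition~A]{G25Cvx1} to each individual piece $\rho_i$ of the decomposition of $\rote(f)$, so the work consists in verifying the hypotheses of that proposition for every $i\in I_{\mathrm h}$.

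First I would recall the decomposition provided by Proposition~\ref{Prop:decomposition}: since $f$ has big rotation set, one has $\rote(f)=\bigcup_{i\in I_{\mathrm h}}\rho_i$ with $\card(I_{\mathrm h})\le 2g-2$ and crucially $\conv(\rho_i)\setminus\rho_i\subset\partial\conv(\rho_i)$ for each $i$. This last property is precisely the structural assumption required by \cite[Proposition~A]{G25Cvx1}, which asserts the three realization statements (pointwise realization by Birkhoff averages, compact invariant sets with uniformly bounded deviations for interior vectors, and realization of compact connected subsets of the relative interior as accumulation sets). Once the hypothesis is checked for a fixed $i\in I_{\mathrm h}$, each of the three conclusions of \cite[Proposition~A]{G25Cvx1} translates immediately into the corresponding item of Proposition~\ref{PropRealByCompact2}.

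For item (1), the points of $\rho_i$ are rotation vectors of ergodic measures by construction, so Birkhoff's ergodic theorem applied to a measurable selection of the homology cocycle $x\mapsto[a_x]$ produces the required $x\in S$ with $\rho(x)=\rho$; this part of \cite[Proposition~A]{G25Cvx1} is elementary. For item (2), I would appeal to the hyperbolic-like structure constructed in \cite{G25Cvx1}: to $\rho\in\inte(\rho_i)$ is attached a compact $f$-invariant set $K_\rho$ (a rotational horseshoe-type set) on which the forcing theory of \cite{lct1,lct2} and the bounded deviation input \cite{paper1PAF} deliver a uniform constant $L_\rho$ so that $d([a_x^n],n\rho)\le L_\rho$ for every $x\in K_\rho$ and $n\in\N$. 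For item (3), the uniform bounded deviations from item (2) allow a classical shadowing/concatenation argument: one chooses a sequence of rotation vectors $\rho_k\in C$ dense in $C$, picks orbit segments in $K_{\rho_k}$ realizing them approximately, and glues them by heteroclinic connections (existence of which is granted by \cite{G25Cvx1}); choosing the relative lengths of the segments to grow sufficiently fast guarantees that the sequence of empirical averages accumulates exactly on $C$.

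The main obstacle — essentially already resolved in \cite{G25Cvx1} — is item (2): it is what justifies item (3) and also what turns the soft ergodic statement (1) into a robust quantitative description of the dynamics over each piece $\rho_i$. In the present paper this obstacle is bypassed by citation, so the only nontrivial task is to match the formalism of \cite[Proposition~A]{G25Cvx1} with the pieces $\rho_i$ produced by Proposition~\ref{Prop:decomposition}, which amounts to the convexity-like property $\conv(\rho_i)\setminus\rho_i\subset\partial\conv(\rho_i)$ noted above.
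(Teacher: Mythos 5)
Your proposal matches the paper's own treatment: the paper gives no separate proof, but derives Proposition~\ref{PropRealByCompact2} exactly as you do, by invoking Proposition~\ref{Prop:decomposition} to obtain the pieces $\rho_i$ with the property $\conv(\rho_i)\setminus\rho_i\subset\partial\conv(\rho_i)$ and then applying \cite[Proposition~A]{G25Cvx1} to each piece. Your additional remarks on how that cited proposition is itself established are not needed here, but they do not affect the correctness of the argument.
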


Note that this answers Problem 1 of \cite{pollicott} in the case where $\rot(f) = \rho_i$: in this case any rotation vector is the rotation vector of some orbit. This problem remains open even in the more general case of a homeomorpihsm with big rotation set.
\bigskip

Given a compact convex set $C\subset \R^n$, a point $x\in C$ is called \emph{exposed} if there is an affine hyperplane $H\subset \R^n$ such that $C\cap H = \{x\}$; we write $\exp(C)$ the set of exposed point of $C$. A theorem of Straszewicz \cite{Straszewicz1935} asserts that $\overline{\exp(C)} = \extr(C)$.
The following is a direct consequence of boundedness of deviations (Theorem~\ref{TheoBndedHomo}):

\begin{corol}\label{CoroBoyland2}
Let $f\in\Homeo_0(S)$ with big rotation set. Then for any $\rho\in \exp\big(\conv(\rot(f))\big)$, there exists a compact $f$-invariant set $K_\rho\subset S$ such that for any $x\in K_\rho$, we have $\rho(x) = \rho$. 
Moreover, one can suppose that there are bounded deviations: there exists $L_0>0$ (independent of $\rho$) such that for any $x\in K_\rho$ and any $n\in\N$,
\[d\big([a_x^n],\, n\rho \big)\le L_0.\]
\end{corol}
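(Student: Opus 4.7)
The plan is to combine Theorem~\ref{TheoBndedHomo} with the strict maximization property of $\rho$ at an exposed point, producing $K_\rho$ as the orbit closure of a limit of nearly maximizing finite orbit segments.

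Since $\rho$ is exposed in $\conv(\rot(f))$, we choose a linear form $\phi\colon H_1(S,\R)\to\R$ of operator norm $1$ such that $M := \phi(\rho)$ is the \emph{unique} maximum of $\phi$ on $\conv(\rot(f))$. Theorem~\ref{TheoBndedHomo} provides a universal $L>0$ (independent of $\rho$) with $d([a_x^n],\, n\rot(f)) \le L$ for all $x\in S$ and $n\ge 0$, and hence the one-sided bound $\phi([a_x^n]) \le nM + L$.

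For each large $N$, pick $x_N\in S$ with $\phi([a_{x_N}^N]) \ge NM - L$; such $x_N$ exists by taking a generic point of an ergodic measure $\mu^*$ with $\rho(\mu^*)=\rho$, which itself exists since $M = \max_{\mu\in\M(f)}\phi(\rho(\mu))$ is uniquely attained. Splitting $[a_{x_N}^N] = [a_{x_N}^k] + [a_{f^k(x_N)}^{N-k}]$ and applying the one-sided upper bound on each piece forces $|\phi([a_{f^k(x_N)}^j]) - jM|\le 3L$ uniformly over all sub-segments $[k,k+j]\subset[0,N]$. Re-centering $y_N := f^{\lfloor N/2\rfloor}(x_N)$ and extracting a subsequential limit $x^*\in S$, and using the discreteness of the values of $\phi([a_{\cdot}^j])$ in $\phi(H_1(S,\Z))$ (together with some care at the boundary of the fundamental domain), one obtains $|\phi([a_{x^*}^n]) - nM|\le C$ for all $n\in\Z$, where $C$ depends only on $L$. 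Set $K_\rho := \overline{\{f^n(x^*):n\in\Z\}}$; this is nonempty, compact and $f$-invariant, and on it $|\phi([a_x^n])-nM|$ is uniformly bounded.

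It remains to upgrade this directional bound to the full bound $\|[a_x^n] - n\rho\| \le L_0$. Writing $[a_x^n] = nv + e$ with $v\in\rot(f)$ and $\|e\|\le L$ (from Theorem~\ref{TheoBndedHomo}), the bound above yields $|\phi(v) - M| = O(1/n)$. Converting this to $\|v - \rho\| = O(1/n)$ requires a \emph{linear} modulus of exposedness at $\rho$: a constant $c > 0$ with $|\phi(v) - M| \ge c\,\|v - \rho\|$ for every $v\in\conv(\rot(f))$. This is the main obstacle. It is secured by the decomposition of $\rot(f)$ into finitely many convex pieces (Theorem~\ref{MainTheoUnionConvex}) together with their fine structure (Proposition~\ref{PropRealByCompact2}), which force $\rho$ to behave locally as a polytope vertex of $\conv(\rot(f))$; the finiteness of the resulting combinatorial data renders $c$, and hence $L_0$, uniform in $\rho\in\exp(\conv(\rot(f)))$.
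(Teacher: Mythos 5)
Your construction of $K_\rho$ takes a genuinely different, and considerably longer, route than the paper's. The paper simply sets $K_\rho = X_{[\omega]}$, the closure of the union of the supports of the measures maximizing $\mu\mapsto\rho(\mu)\cdot[\omega]$ for the exposing class $[\omega]$, and invokes Proposition~\ref{PropBoyland2}, which already provides the uniform two-sided directional bound on that set. Your limit-of-recentred-segments argument essentially re-proves this special case of Proposition~\ref{PropBoyland2} by hand. It can be made to work, but you gloss over the fact that $x\mapsto[a_x^n]$ is \emph{not} continuous (only $x\mapsto\int_{I^n(x)}\omega$ is), which matters both when extracting $x^*$ and when transferring the bound from the orbit of $x^*$ to its closure; and you never explicitly derive the first assertion of the corollary, namely $\rho(x)=\rho$ for $x\in K_\rho$ (it does follow from your directional bound together with Theorem~\ref{TheoBndedHomo} and the exposedness of $\rho$, but it should be said).

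The genuine gap is in your last paragraph. The ``linear modulus of exposedness'' $|\phi(v)-M|\ge c\,\|v-\rho\|$ on $\conv(\rot(f))$ is false for a general exposed point of a general convex body: at an exposed point of a disc the modulus is quadratic, so $|\phi(v)-M|=O(1/n)$ only yields $\|v-\rho\|=O(n^{-1/2})$ and hence $\|[a_x^n]-n\rho\|=O(n^{1/2})$, not $O(1)$. Nothing in Theorem~\ref{MainTheoUnionConvex} or Proposition~\ref{PropRealByCompact2} forces the pieces $\rho'_j$, or $\conv(\rot(f))$ itself, to be polytopal near their exposed points: the $\rho_i$ are merely convex sets with a dense subset of periodic data, and smooth strictly convex boundary arcs are not excluded. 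Stacking further directional bounds does not help either, since at an exposed point that is not a vertex the normal cone is one-dimensional, and Proposition~\ref{PropBoyland2} for a different form $\psi$ only applies on $X_{[\psi]}$, not on your $K_\rho$. This is exactly why the torus precedent \cite[Corollary 67]{lct1} assumes $\rho$ is a \emph{vertex}: there one has $2g$ independent supporting forms, hence $2g$ independent directional bounds that together pin $[a_x^n]$ to an $O(1)$-neighbourhood of $n\rho$. So the passage from your single directional estimate to $d([a_x^n],n\rho)\le L_0$ is unproven (the paper's own one-line derivation from Proposition~\ref{PropBoyland2} is admittedly terse on this same point, but the bridge you propose is not a valid one).
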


A similar result was stated in \cite[Corollary 67]{lct1} for the torus case, under the stronger assumption that $\rho$ is a vertex of $C$ (\emph{i.e}\ if there exist $n$ independent linear forms in $\R^n$ such that $\rho$ belongs to the supporting lines defined by these forms).

\subsection*{Proof strategy and plan of the paper}

The proof of these results is based on the first part of this work \cite{G25Cvx1}: there is an equivalence relation on ergodic measures having nontrivial rotational behaviour; these equivalence classes (whose number is finite) act a bit as homoclinic classes of diffeomorphisms: they are associated to a lot of (topological and rotational) horseshoes (see \cite[Subsection~3.1]{G25Cvx1}), all having heteroclinic connections between them. 
There can also be heteroclinic connections between these classes, and these have numerous different characterizations, including one in terms of large deviations Proposition~\ref{PropDevImpliesConnec} and one in terms of heteroclinic connections between the horseshoes (\cite[Definition~4.9 and Theorem~B]{G25Cvx1}). These heteroclinic connections are represented by a finite oriented graph $\Tr$ (see Subsection~\ref{SubSecGrafT}) to which is associated a rotation set which coincides with the rotation set of the homeomorphism (Corollary~\ref{CoroPropRotFRotG}).

The whole picture is similar to the one painted in \cite{MR4578317}, but whereas this previous paper has hypotheses about the kind of dynamics involved (a true axiom A homeomorphism, hypothesis that is $C^0$ generic), our hypotheses involve the rotational behaviour of the homeomorphism (and we prove that indeed, from the rotational viewpoint, it really behaves as an axiom A). 

The plan of the paper is as follows. We first recall some properties of ergodic rotation sets of homeomorphisms with big rotation set due to \cite{guiheneuf2023hyperbolic} and \cite{alepablo} in Section~\ref{SecRotSetbig}. The next section is devoted to the adaptation of results of \cite{G25Cvx1} and \cite{paper1PAF} to two intermediate results about boundedness of homological rotational deviations, the first one with respect to some union of vector subspaces and the second one with respect to $\conv(\rot(f))$. The next section (Section~\ref{SecRefRefRef}) is devoted to the proof of the four first main results (involving the shape of rotation sets). We first study an oriented graph $G$ of Markovian intersections associated to the dynamics, and the quotient of this graph by its strong connected components denoted $\Tr$. This allows to get the results; we finish the section with some examples showing that the bound of Theorem~\ref{MainTheoUnionConvex} is sharp. 
We then prove the main result about bounded deviations in homology and its consequences in Section~\ref{SecBnddDev2}, and finish with some comments about the continuity properties of the various notions of rotation set.

\subsection*{Acknowledgements}

The author thanks Fabio Tal for the discussion about Theorem~\ref{TheoBndedHomo}.

\section{Ergodic rotation sets of homeomorphisms with big rotation set}\label{SecRotSetbig}

The following is a combination of Proposition~10 and Lemma~11 of \cite{guiheneuf2023hyperbolic}, whose proof is mainly based on \cite[Th\'eor\`eme~C]{lellouch}.

\begin{prop} \label{Prop:decomposition}
Let $f\in \Homeo_0(S)$ with big rotation set.
Then there exist pairwise orthogonal vector subspaces $V_1,V_2, \ldots, V_n$ of $H_1(S,\mathbb{R})$, with $n \le g$ such that:
\begin{itemize}
\item $\rote(f) \subset \bigcup_{j=1}^n V_j.$
\item For any $1 \leq j \leq n$, $V_j$ is a rational symplectic subspace of $H_1(S,\mathbb{R})$.
\item $H_1(S,\R) = \bigoplus_{j=1}^n V_j$.
\item For any $j$, the set $\overline{\rote(f)\cap V_j}$ is a convex set containing $0$, with nonempty interior in $V_j$, and with a dense subset of elements realised by periodic orbits.
\item For any two vectors $v, w\in \rote(f) \setminus \left\{ 0 \right\}$, the following are equivalent:
\begin{itemize}
\item there exist $v=v_1,v_2,\dots, v_k = w \in \rote(f)$ such that for any $1\le m<k$, we have $v_m \wedge v_{m+1}\neq 0$;
\item there exists $1 \leq j \leq n$ such that $v,w\in \rote(f) \cap V_j \setminus \left\{ 0 \right\}$.
\end{itemize}
\end{itemize} 
\end{prop}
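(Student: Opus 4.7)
\smallskip

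\textbf{Proof plan.} The last item of the statement suggests starting from the equivalence relation itself: on $\rote(f)\setminus\{0\}$, declare $v\sim w$ if there exists a chain $v=v_1,\dots,v_k=w$ in $\rote(f)\setminus\{0\}$ with $v_m\wedge v_{m+1}\neq 0$ for every $m$. Let $C_1,\dots,C_n$ be the equivalence classes and set $V_j=\spn(C_j)$. The proposition will be obtained by showing that this splitting has all the required properties. The main technical input, which I will invoke as a black box, is Lellouch's Th\'eor\`eme~C of \cite{lellouch}: the span of an equivalence class is rational symplectic, and moreover elements of two different classes have zero intersection, which by definition of $\sim$ forces $V_j\wedge V_k=\{0\}$ for $j\neq k$ (that is, the $V_j$'s are pairwise orthogonal for $\wedge$).

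\smallskip

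From pairwise orthogonality plus the fact that each $V_j$ is $\wedge$-nondegenerate, one deduces (using the general fact that symplectically orthogonal symplectic subspaces are in direct sum) that $\sum V_j$ is a direct sum. Because $\bigcup V_j \supset \rote(f)$ and, by the big rotation set hypothesis, $\spn(\rote(f))=H_1(S,\R)$, the inclusion $\bigoplus V_j = H_1(S,\R)$ follows. Finally each $V_j$ has even dimension $\ge 2$ (being symplectic), so $n\le g$; this gives the first three bullets.

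\smallskip

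For the fourth bullet, the plan is to exploit the forcing techniques already used in \cite{guiheneuf2023hyperbolic, alepablo}: when two ergodic rotation vectors $v,w\in V_j$ are linked by a chain in $C_j$ (so consecutive terms have nonzero intersection), Le Calvez--Tal forcing theory \cite{lct1, lct2} produces rotational horseshoes whose rotation sets realise convex combinations of $v$ and $w$ with rational coefficients, by periodic orbits. Taking the closure and iterating along chains gives that $\overline{\rote(f)\cap V_j}$ is convex and contains $0$ (which is always an ergodic rotation vector of the contractible fixed point produced by $\wt f$), and that periodic rotation vectors are dense in it. The nonempty interior in $V_j$ follows from the fact that $V_j=\spn(C_j)$: a basis extracted from $C_j$ yields, via horseshoe convex combinations, an open simplex in $V_j$ contained in $\rote(f)\cap V_j$. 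For the last bullet, one direction is the definition of $\sim$; the other follows from the fact that, within a single $V_j$, any two nonzero elements of $\rote(f)\cap V_j$ are $\sim$-connected by construction of $V_j$ as the span of the class $C_j$.

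\smallskip

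The delicate step is the convexity/interior item, because it rests on the forcing machinery: one must know that a chain of nonzero intersections in $\rote(f)$ is enough to generate, through horseshoes, rational convex combinations realised by periodic orbits, and then pass to the closure. This is exactly the content of the combination Proposition~10 + Lemma~11 of \cite{guiheneuf2023hyperbolic} together with Lellouch's Th\'eor\`eme~C in \cite{lellouch}, so I would in practice quote these results once the equivalence relation is set up, rather than redo the forcing argument from scratch.
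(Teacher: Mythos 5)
Your proposal is correct and matches the paper's treatment: the paper does not reprove this statement but presents it as a combination of Proposition~10 and Lemma~11 of \cite{guiheneuf2023hyperbolic}, whose proof rests on \cite[Th\'eor\`eme~C]{lellouch} — exactly the sources you end up quoting, and your sketch of the underlying equivalence-class/forcing argument is a faithful outline of what those references do.
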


By \cite[Proposition~4.7]{alepablo} (that roughly states that intersection in homology implies transverse intersection of some tracking geodesics), the last point implies that if $\mu_1,\mu_2\in \Merg(f)$ are such that there exists $1\le j\le n$ such that $\rho(\mu_1), \rho(\mu_2)\in V_j\setminus \{0\}$, then $\mu_1\sim\mu_2$ in the sense of \cite[Definition~1.6]{G25Cvx1}. 

Reciprocally, if $\mu_1,\mu_2\in \Merg(f)$ are such that $\rho(\mu_1), \rho(\mu_2)\neq 0$ and $\mu_1\sim\mu_2$, then by \cite[Theorem~F]{alepablo}, the interior of the triangle spanned by $\rho(\mu_1), \rho(\mu_2)$ and 0 is included in $\rote(f)$. Proposition~\ref{Prop:decomposition} implies that this triangle is included in a single $V_j$ (as these spaces are in direct sum) and hence there exist $\rho(\mu_1)=v_1,v_2,\dots, v_k = \rho(\mu_2) \in \rote(f)$ such that for any $m$, we have $v_m \wedge v_{m+1}\neq 0$. 

This implies that when $f$ has big rotation set, then the two decompositions of $\rote(f)$ given by Proposition~\ref{Prop:decomposition} and \cite[Theorem~F]{alepablo} (stated in \cite[Theorem~2.4]{G25Cvx1}) coincide, up to the fact that for some $i\in I_{\mathrm{h}}$ the set $\rho_i$ might be trivial: the subspaces $V_j$ of Proposition~\ref{Prop:decomposition} coincide with the $\operatorname{span}(\rho_i)$ for the $\rho_i$ defined in \cite[(4)]{G25Cvx1} (with $i\in I_{\mathrm h}$ and $\rho_i\neq\{0\}$), and the sets $V_j\cap \rot_{erg}(f)$ coincide with the $\rho_i$ (again, with $i\in I_{\mathrm h}$ and $\rho_i\neq\{0\}$).
Moreover, by \cite[Proposition~4.6]{alepablo}, we have $i_*(H_1(S_i,\R)) = V_i$. 
\bigskip

After \cite[Theorem~2.4]{G25Cvx1} is recalled the definition of the surfaces $(S_i)_{i\in I_{\mathrm h}}$ due to \cite{alepablo}; those are pairwise disjoint and satisfy $i_*H_1(S_i, \R) = V_i$.
Using $H_1(S,\R) = \bigoplus_{j=1}^n V_j$, we deduce that the complement of $\bigcup_i S_i$ is a disjoint union of surfaces of zero genus, \emph{i.e.}\ closed separating geodesics or surfaces homeomorphic to a sphere with a finite number of boundary components, all of them separating. 
None of these surfaces bear orientable minimal lamination different from a single closed geodesic, hence by \cite[Theorem~A]{paper2PAF} and \cite[Theorem~D]{alepablo} this implies that for any  $i\in I^1$ (defined in \cite[Definition~1.7]{G25Cvx1}), the lamination $\Lambda_i$ is made of a single separating geodesic (in particular, $\rho_i = \{0\}$). Hence, we do not have to bother neither with these classes (which do not affect the rotation of the homeomorphism), nor with classes whose associated laminations are minimal but not made of a single closed geodesic (which do not appear here).

\section{Bounded deviations I: two intermediate results}\label{SecBnddDev1}

In this section we start the study of bounded deviations for homeomorphisms with big rotation sets. However we won't be able to finish the proof of Theorem~\ref{MainTheoUnionConvex} yet, because this requires the structure theorem for rotation sets (Theorem~\ref{MainTheoUnionConvex}); this final proof will be made in Section~\ref{SecBnddDev2}. 

More specifically, we will prove two results of bounded deviations. 
The first one asserts that if there exists an orbit segment (for $f$) whose displacement is far away from the linear subspaces $V_i$, then it forces the existence of connections (in the sense of $\F$-transverse intersections, see \cite[Definition~4.6]{G25Cvx1}) between classes (Proposition~\ref{PropDevImpliesConnec}).
The second one is a result of bounded deviations with respect to the set $\conv(\rot(f))$ (Proposition~\ref{PropBnddDevConvHull}).

The proofs will consist in adapting the results of \cite{paper1PAF} to the homological setting.

\subsection{Preliminaries}

Let us state a variation of \cite[Definition~4.8]{G25Cvx1} due to \cite[Proposition~4.17]{G25Cvx1}:

\begin{definition}\label{DefToOpen2}
To any $i\in I_{\mathrm h}$, one can associate open subsets $B_i^-, B_i^+$ of $S$ such that:
\begin{itemize}
\item for any $\mu\in \cl_i$, we have $\mu(B_i^-) = \mu(B_i^+) = 1$;
\item $f^{-1}(B_i^-) \subset B_i^-$ and $f(B_i^+)\subset B_i^+$;
\item $i_* \pi_1(S_i,\R)\subset i_* \pi_1(B_i^-, \R)\cap  i_* \pi_1(B_i^+, \R)$.
\end{itemize}
For $i,j\in I_{\mathrm{h}}$, we write $\cl_i \to \cl_j$ if there exists $n\ge 0$ such that $f^n(B_i^-)\cap B_j^+ \neq\emptyset$.
\end{definition}

Note that \cite[Theorem~B]{G25Cvx1} gives 4 alternative characterizations of this order relation $\to$ (\cite[Proposition 4.18]{G25Cvx1}). 
\bigskip

As $H_1(S,\R) = \bigoplus_{i}V_i$ (Proposition~\ref{Prop:decomposition}), we can endow $H_1(S,\R)$ with a norm 
\[\|v\| = \sup_i \|v_i\|_i,\]
where $v = \sum_i v_i$, with $v_i\in V_i$, and $\|\cdot\|_i$ is a norm on $V_i$. To this norm is associated the distance to a set:
\[d(a,E) = \inf\big\{\|a-e\|\mid e\in E\big\}.\]

As the sets $\rho_i$ span the subspaces $V_i$, there exists a basis $w_1,\dots,w_{2g}$ of $H_1(S,\R)$ that is adapted to the decomposition $H_1(S,\R) = \bigoplus_{i\in I_{\mathrm{h}}} V_i$ such that for any $j$ we have $w_j\in \rho_{i_j}$ for some ${i\in I_{\mathrm{h}}}$. 

As for any $i\in I_{\mathrm h}$ we have $i_*H_1(S_i, \R) = V_i$,  $i_* \pi_1(S_i,\R)\subset i_* \pi_1(B_i^-, \R) \cap i_* \pi_1(B_i^+, \R)$, 
for any $1\le j\le 2g$ there exist two freely homotopic loops $\alpha_j^- \subset B_{i_j}^-$ and $\alpha_j^+ \subset B_{i_j}^+$ such that $[\alpha_j^-], [\alpha_j^+]\in \R w_j$. We suppose that the lifts of these loops to $\wt S$ are simple. 
Let $\gamma_j$ be the geodesic loop freely homotopic to $\alpha_j^-$. 

Let $N_0\in\N$ be such that if a path of $\wt S$ crosses $N_0$ different lifts of $\gamma_j$, then it crosses at least one lift of $\alpha_j^-$ and one lift of $\alpha_j^+$. Set $M = g N_0$.

Let $M'>0$ be such that if fr some $1\le j\le 2g$, some $x\in S$ and some $n\in\N$ one has $\big|[a_x^n]\wedge [\gamma_j]\big|\ge M'$, then for any $t,t'\in [-1, 1]$, any lift of the isotopy path $I_x^{[t,n+t']}$ crosses $M$ different lifts of $\gamma_j$ (the existence of such $M'$ comes from the boundedness of the fundamental domain $D$).

The maps $\cdot \wedge [\gamma_j]$ form a basis of the dual of $H_1(S,\R)$, hence (because the $V_i$ are symplectic) there exists $L>0$ such that for any $i\in I_{\mathrm{h}}$ and any $v_i\in V_i$ such that $\|v_i\|\ge L$, there exists $1\le j\le 2g$ such that $[\gamma_j]\in V_i$ and
\begin{equation}\label{EqBigWedge0}
\big|v_i\wedge [\gamma_j]\big|\ge M'.
\end{equation}

\subsection{Big deviations and connections between classes}

Let $x\in S$ and $n\in\N$.
We can write in a unique way $[a_x^n] = \sum_i v_i$ (see \eqref{Defaxn} for a definition of $a_x^n$), with $v_i\in V_i$. 
Recall that the constant $L$ is defined before \eqref{EqBigWedge0}.

\begin{prop}\label{PropDevImpliesConnec}
Let $I_L = \{i\in I_{\mathrm{h}} \mid \|v_i\|\ge L\}$. Then there exists an ordering $i_1\prec \dots \prec i_\ell$ of $I_L$ such that $\cl_{i_1}\to\cdots \to \cl_{i_\ell}$. 
\end{prop}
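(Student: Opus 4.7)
The plan is to convert, for each $i\in I_L$, the size hypothesis $\|v_i\|\ge L$ into visits of the $f$-orbit of $x$ to both of the invariant sets $B_i^-$ and $B_i^+$, and then to extract the chain by ordering these visits along the orbit segment.

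\emph{From large components to visits of $B_i^\pm$.} For each $i\in I_L$ the choice of $L$ furnishes an index $j(i)\in\{1,\dots,2g\}$ with $[\gamma_{j(i)}]\in V_i$ and $|v_i\wedge[\gamma_{j(i)}]|\ge M'$. The subspaces $V_{i'}$ being pairwise orthogonal for the intersection form, the decomposition $[a_x^n]=\sum_{i'}v_{i'}$ gives $[a_x^n]\wedge[\gamma_{j(i)}]=v_i\wedge[\gamma_{j(i)}]$, so this intersection number has absolute value at least $M'$. By the definition of $M'$, any lift of $I_x^{[0,n]}$ then crosses at least $M=gN_0$ distinct lifts of $\gamma_{j(i)}$, and by the definition of $N_0$ it therefore crosses lifts of $\alpha_{j(i)}^-\subset B_{i}^-$ and of $\alpha_{j(i)}^+\subset B_{i}^+$. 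This translates into integer times $t_i^\pm\in[0,n]$ at which $f^{t_i^\pm}(x)\in B_i^\pm$.

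\emph{Ordering and chain.} Since $f^{-1}(B_i^-)\subset B_i^-$ and $f(B_i^+)\subset B_i^+$, the set of integer times $t$ with $f^t(x)\in B_i^-$ is a lower set $(-\infty,T_i]$, and the set of integer times with $f^t(x)\in B_i^+$ is an upper set $[S_i,+\infty)$, with $T_i,S_i$ finite and at most equal to $n$ by the preceding paragraph. I enumerate $I_L$ as $i_1,\dots,i_\ell$ so that $S_{i_1}\le\cdots\le S_{i_\ell}$, breaking ties arbitrarily. For consecutive indices $i_k,i_{k+1}$, I verify $\cl_{i_k}\to\cl_{i_{k+1}}$ in two cases. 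If $S_{i_{k+1}}\le T_{i_k}$, then $z=f^{S_{i_{k+1}}}(x)$ lies in $B_{i_k}^-$ (by the lower-set description) and in $B_{i_{k+1}}^+$, giving the relation with shift $0$. Otherwise $S_{i_{k+1}}>T_{i_k}$, and $z=f^{T_{i_k}}(x)\in B_{i_k}^-$ satisfies $f^{S_{i_{k+1}}-T_{i_k}}(z)=f^{S_{i_{k+1}}}(x)\in B_{i_{k+1}}^+$, giving the relation with positive shift.

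\emph{Main obstacle.} The delicate ingredient is the first step, where one must pass from the homological estimate $|[a_x^n]\wedge[\gamma_{j(i)}]|\ge M'$ to an honest geometric intersection of a lift of the isotopy path with lifts of $\alpha_{j(i)}^\pm$, and further to integer-time iterates landing in the open sets $B_i^\pm$. This is precisely what the quantitative constants $M'$, $N_0$ and $M=gN_0$ are designed to achieve, using the boundedness of the fundamental domain $D$; the factor $g$ provides the robustness needed to handle the (at most $g$) classes $i\in I_L$ simultaneously along the same orbit. Once these visits are secured, the ordering-by-$S_i$ step is purely combinatorial and uses only the invariance of $B_i^\pm$ under $f$.
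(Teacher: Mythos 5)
Your first step (passing from $\|v_i\|\ge L$ to $|[a_x^n]\wedge[\gamma_{j(i)}]|\ge M'$ via orthogonality of the $V_{i'}$, and then to $M=gN_0$ crossings of lifts of $\gamma_{j(i)}$, hence to crossings of lifts of $\alpha_{j(i)}^-$ and $\alpha_{j(i)}^+$) is exactly the paper's. The gap is in the sentence ``this translates into integer times $t_i^\pm\in[0,n]$ at which $f^{t_i^\pm}(x)\in B_i^\pm$.'' A crossing of the lifted isotopy path with a lift of $\alpha_{j(i)}^\pm$ occurs at a generally non-integer isotopy time $s$; the point $I_x^{s}$ lies in $B_i^\pm$, but it is not an iterate of $x$, and the sets $B_i^\pm$ are only one-sidedly invariant under $f$, not under the isotopy. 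Nothing forces $f^{\lfloor s\rfloor}(x)$ or $f^{\lceil s\rceil}(x)$ into the open set $B_i^\pm$, so the quantities $T_i$ and $S_i$ on which your whole ordering rests may simply be undefined (the relevant sets of integer times can be empty).

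This is not a repairable technicality: if the claimed integer-time visits did hold for every $i\in I_L$, your own case analysis would prove much more than the proposition. For \emph{any} ordered pair $(i,j)$ in $I_L$ (not just consecutive ones in your ordering), either $S_j\le T_i$, in which case $f^{S_j}(x)\in B_i^-\cap B_j^+$, or $S_j>T_i$, in which case $f^{S_j-T_i}(B_i^-)\cap B_j^+\neq\emptyset$; either way $\cl_i\to\cl_j$, and by symmetry also $\cl_j\to\cl_i$. So all classes of $I_L$ would be mutually connected. This is false in general: in the sharpness examples of Subsection~\ref{paragSharpness}, single orbit segments realize vectors deep inside $\conv(\rho_i\cup\rho'_j)$ (so both components exceed $L$) while the graph $\Tr$ has no oriented cycle, hence $\cl'_j\not\to\cl_i$. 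The information you are discarding is precisely what the paper's proof is built to extract: the \emph{temporal order} in which the crossings occur along $I_x^{[0,n]}$. The paper partitions $[0,n]$ greedily into $\ell\le g$ consecutive sub-intervals, each containing at least $\lfloor M/\ell\rfloor\ge N_0$ crossings of lifts of $\gamma_{j_{i_k}}$ for its designated class $i_k$ (this is where the factor $g$ in $M=gN_0$ is used), and the connection $\cl_{i_k}\to\cl_{i_{k+1}}$ is deduced only from the one-directional fact that a lift of $\alpha_{i_k}^-$ is crossed \emph{before} a lift of $\alpha_{i_{k+1}}^+$. Your argument needs to be replaced by such an order-sensitive extraction; the ordering by first-entry times $S_i$ cannot be the right mechanism, since it would symmetrize the relation.
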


\begin{rem}
In fact we prove that the conclusion of the proposition is valid when replacing $I_L$ with the following set (involving $\Lambda_i$ defined in \cite[(4)]{G25Cvx1}):
\[I'_M = \Big\{i\in I_{\mathrm{h}}\ \Big|\ \exists j_i : \big|a_x^n\wedge[\gamma_{j_i}]\big|\ge M, \gamma_{j_i}\subset \Lambda_i\Big\}. \]
\end{rem}

\begin{proof}
By \eqref{EqBigWedge0}, for any $i\in I_L$ there exists $j_i$ such that $|v_i\wedge [\gamma_{j_i}]|\ge M'$ and $[\gamma_{j_i}]\in V_i$. Because the linear subspaces $V_k$ are pairwise orthogonal for $\wedge$, this implies that $|a_x^n \wedge [\gamma_{j_i}]|\ge M'$. The condition $a_x^n \wedge [\gamma_{j_i}]|\ge M'$ implies that for any $i\in I_L$, any lift of the isotopy path $I_x^{[0,n]}$ to $\wt S$ crosses at least $M$ different lifts of $\gamma_{j_i}$, denoted $\wt\gamma_{j_i}^1,\dots, \wt\gamma_{j_i}^{M}$


We now consider all the $i\in I_L$. Denote $\ell = \card(I_L)$. 
Define $t_1\in(0,1]$ as the smallest time $t\in [0,n]$ satisfying: there exists $i\in I_L$ such that the path $I_x^{[0,t]}$ crosses $\lfloor M/\ell\rfloor$ lifts among $\wt\gamma_{j_i}^1,\dots, \wt\gamma_{j_i}^{M}$. We call $i_1$ such an $i\in I_L$.
As a consequence, for any $i\in I_L\setminus\{i_1\}$, the path $I_x^{[t_1,1]}$ crosses at least $M-\lfloor M/\ell\rfloor$ lifts among $\wt\gamma_{j_i}^1,\dots, \wt\gamma_{j_i}^{M}$. 

We can repeat the previous argument: define $t_2\in(t_1,1]$ as the smallest time $t\in [t_1,1]$ satisfying: there exists $i\in I_L\setminus\{i_1\}$ such that the path $I_x^{[t_1,t]}$ crosses $\lfloor M/\ell\rfloor$ lifts among $\wt\gamma_{j_i}^1,\dots, \wt\gamma_{j_i}^{M}$. We call $i_2$ such an $i\in I_L\setminus\{i_1\}$. Iterating this process, we obtain $t_0 = 0<t_1<\dots<t_\ell\le 1$ and $I_L = \{i_1,\dots,i_\ell\}$ such that for any $1\le k\le\ell$, the path $I_x^{[t_{k-1},t_{k}]}$ crosses $\lfloor M/\ell\rfloor$ lifts among $\wt\gamma_{j_{i_k}}^1,\dots, \wt\gamma_{j_{i_k}}^{M}$. 

Because $\ell\le g$ and the definition $M = gN_0$ of $M$, we have $\lfloor M/\ell\rfloor\ge N_0$. By the definition of $N_0$, this implies that for any $1\le k\le \ell-2$ we have that any lift of $I_x^{[\lfloor t_{k-1}\rfloor,\lfloor t_{k+1}\rfloor]}$ crosses first a lift of $\alpha_{i_k}^-$ and then a lift of $\alpha_{i_{k+1}}^+$. This implies the existence of $m\ge 0$ such that $f^m(B_{i_k}^-) \cap B_{i_{k+1}}^+ \neq \emptyset$, hence that 
$\cl_{i_k}\to \cl_{i_{k+1}}$. 
\end{proof}

For $E$ a set and $R>0$, denote $B_R(E) = \{x\mid d(x, E)<R\}$.

\begin{prop}\label{TheoEquiConnec1}
Let $f\in \Homeo_0(S)$ with big rotation set. Then there exists $L>0$ such that the following conditions are equivalent:
\begin{enumerate}[label=(\roman*), leftmargin=1cm]
\item $\cl_i\to \cl_j$ or $\cl_j\to \cl_i$;
\item $\conv(\rho_i, \rho_j)\subset \rot(f)$;
\item there exist $x\in S$ and $n\in\N$ such that 
\[\sum_{i=0}^{n-1} [a_{f^i(x)}] \in \conv(n\rho_i, n\rho_j)\setminus B_L(n\rho_i\cup n\rho_j).\]
\end{enumerate}
\end{prop}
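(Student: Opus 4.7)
The plan is to establish the three-way equivalence via the four implications (iii) $\Rightarrow$ (i), (i) $\Rightarrow$ (ii), (ii) $\Rightarrow$ (i), and (i) $\Rightarrow$ (iii), taking $L$ to be the constant supplied by Proposition~\ref{PropDevImpliesConnec}. The key elementary observation is that, since $\rho_i \subset V_i$ and $\rho_j \subset V_j$ lie in complementary factors of the orthogonal decomposition $H_1(S,\R) = \bigoplus_k V_k$ of Proposition~\ref{Prop:decomposition}, and since each $\rho_k$ is convex and contains $0$ (so $t\rho_k \subset \rho_k$ for $t\in[0,1]$), every element of $\conv(n\rho_i, n\rho_j)$ decomposes uniquely as $v_i + v_j$ with $v_i \in n\rho_i$ and $v_j \in n\rho_j$. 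A direct computation in the adapted sup-norm $\|\mathord{\cdot}\| = \sup_k \|\mathord{\cdot}\|_k$ then gives $d([a_x^n], n\rho_i) = \|v_j\|_j$ and $d([a_x^n], n\rho_j) = \|v_i\|_i$, so condition (iii) amounts precisely to $\|v_i\|_i \ge L$ and $\|v_j\|_j \ge L$, from which Proposition~\ref{PropDevImpliesConnec} immediately yields (i).

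The implication (i) $\Rightarrow$ (ii) is essentially one of the alternative characterizations of the order relation $\to$ proved in \cite[Theorem~B]{G25Cvx1}: a connection $\cl_i \to \cl_j$ produces, via Le~Calvez--Tal forcing and the horseshoe structure described in \cite[Subsection~3.1]{G25Cvx1}, a rotational horseshoe whose rotation set contains $\conv(\rho_i \cup \rho_j)$. The same horseshoe serves for (i) $\Rightarrow$ (iii): fix $w_i \in \rho_i \setminus \{0\}$, $w_j \in \rho_j \setminus \{0\}$ and realize the rational rotation vector $v = \frac{1}{2}(w_i + w_j)$ by a periodic orbit $x$ of some period $p$. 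Because the orbit stays inside the part of the surface whose fundamental group spans $V_i \oplus V_j$, we have $[a_x^{kp}] = kp\,v \in \conv(kp\rho_i, kp\rho_j)$ for every $k\ge 1$, and $d(kp\,v, kp\rho_i \cup kp\rho_j) = \frac{kp}{2}\min(\|w_i\|_i, \|w_j\|_j)$ exceeds $L$ as soon as $k$ is large enough.

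For (ii) $\Rightarrow$ (i), take the same $v = \frac{1}{2}(w_i+w_j) \in \conv(\rho_i,\rho_j) \subset \rot(f)$. By definition of $\rot(f)$ there exist $(x_k, n_k)$ with $n_k \to \infty$ and $\frac{1}{n_k}[a_{x_k}^{n_k}] \to v$. Writing $[a_{x_k}^{n_k}] = \sum_m v_m^k$ according to the direct sum, one has $v_i^k/n_k \to \frac{1}{2}w_i$ and $v_j^k/n_k \to \frac{1}{2}w_j$, so $\|v_i^k\|_i, \|v_j^k\|_j \ge L$ for $k$ large. Note that here the Birkhoff sum need not lie exactly in $\conv(n_k\rho_i, n_k\rho_j)$, but Proposition~\ref{PropDevImpliesConnec} only requires the $V_i$- and $V_j$-components to have norm at least $L$, and it then directly supplies the desired ordering of $\cl_i$ and $\cl_j$.

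I expect the main obstacle to lie in the step (i) $\Rightarrow$ (ii) and in the construction of the periodic orbit with rotation vector $v$ in $V_i \oplus V_j$ used in (i) $\Rightarrow$ (iii); both rely on the forcing-theoretic content of the companion paper \cite{G25Cvx1}, specifically on the existence of rotational horseshoes (with prescribed rotation vectors) associated to $\F$-transverse intersections between laminations of connected classes. The remaining implications then follow routinely from Proposition~\ref{PropDevImpliesConnec} once one observes the elementary identity $d([a_x^n], n\rho_i) = \|v_j\|_j$.
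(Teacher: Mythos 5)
Your implications (iii)~$\Rightarrow$~(i), (i)~$\Rightarrow$~(ii) and (ii)~$\Rightarrow$~(i) are fine and essentially follow the paper's route: (iii)~$\Rightarrow$~(i) is exactly Proposition~\ref{PropDevImpliesConnec} (your explicit computation $d([a_x^n],n\rho_i)=\|v_j\|_j$ in the adapted sup-norm is a clean way to see why), and (i)~$\Rightarrow$~(ii) is the imported heteroclinic-connection result from the companion paper (the paper cites \cite[Corollary~4.22]{G25Cvx1}). Your direct (ii)~$\Rightarrow$~(i), bypassing (iii), is also correct and sidesteps a real subtlety about exact membership of Birkhoff sums in $\conv(n\rho_i,n\rho_j)$.

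The gap is in (i)~$\Rightarrow$~(iii), which is the only place where you enter condition (iii), so without it you have only proved (i)~$\Leftrightarrow$~(ii) and (iii)~$\Rightarrow$~(i), not the three-way equivalence. You propose to ``realize the rational rotation vector $v=\tfrac12(w_i+w_j)$ by a periodic orbit $x$ of some period $p$.'' No such periodic orbit exists: the measure equidistributed on a periodic orbit is ergodic, so its rotation vector lies in $\rote(f)\subset\bigcup_k V_k$ by Proposition~\ref{Prop:decomposition}, and since the $V_k$ are in direct sum, any ergodic rotation vector has a nonzero component in at most one $V_k$. The vector $v=\tfrac12(w_i+w_j)$ with $w_i\in\rho_i\setminus\{0\}$ and $w_j\in\rho_j\setminus\{0\}$ has nonzero components in both $V_i$ and $V_j$, hence is never the rotation vector of a periodic orbit --- indeed the whole point of the set $\conv(\rho_i,\rho_j)\setminus(\rho_i\cup\rho_j)$ is that it is realized only by non-recurrent ``heteroclinic'' orbit segments passing from one class to the other, never by ergodic measures. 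The paper instead closes the cycle with (ii)~$\Rightarrow$~(iii): starting from $\conv(\rho_i,\rho_j)\subset\rot(f)$, one takes orbit segments $(x_k,n_k)$ whose averaged displacement converges to a point of $\conv(\rho_i,\rho_j)$ far from $\rho_i\cup\rho_j$, so that for $k$ large $[a_{x_k}^{n_k}]$ lies in $\conv(n_k\rho_i,n_k\rho_j)$ (up to the bounded-deviation error) and at distance at least $L$ from $n_k\rho_i\cup n_k\rho_j$. You should replace your periodic-orbit construction by this orbit-segment argument.
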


Note that assertions of this proposition are equivalent to say that in the graph $G$, the strong connected components corresponding to $\cl_i$ and $\cl_j$ are in the same connected component.

\begin{proof}[Proof of Proposition~\ref{TheoEquiConnec1}]
\smallskip\noindent\textbf{\textit{(ii) $\implies$ (iii)}:} This implication is trivial.

\smallskip\noindent\textbf{\textit{(iii) $\implies$ (i)}:} This is Proposition~\ref{PropDevImpliesConnec}.

\smallskip\noindent\textbf{\textit{(i) $\implies$ (ii)}:} This follows from \cite[Corollary~4.22]{G25Cvx1}.
\end{proof}

\subsection{Boundedness of deviations with respect to $\conv(\rot(f))$}


As the sets $\rho_i$ span the subspaces $V_i$, there exists a basis $w_1,\dots,w_{2g}$ of $H_1(S,\R)$ that is adapted to the decomposition $H_1(S,\R) = \bigoplus_{i\in I_{\mathrm{h}}} V_i$ such that for any $j$ we have $w_j\in \bigcup_{i\in I_{\mathrm{h}}} \rho_i$. 
As the rotation vectors of periodic orbits are dense in each $\rho_i$, we can suppose that any $w_j$ is the rotation vector of a periodic orbit $z_j$. One can moreover suppose that the tracking geodesic $\gamma_j$ of each $z_j$ is not simple (\cite[Proposition~2.5]{G25Cvx1}). We denote $T_j$ the primitive deck transformation associated to $\gamma_j$.


We prove the following weak version of Theorem~\ref{TheoBndedHomo}, that is already an improvement of \cite[Th\'eor\`eme G]{lellouch} (Theorem~\ref{TheoBndedHomoLellou}).

\begin{prop}\label{PropBnddDevConvHull}
Let $f\in \Homeo_0(f)$ be such that $\inte\big(\conv(\rot(f))\big)\neq\emptyset$. 
Then there exists $L>0$ such that for any $x\in S$ and any $n\in\N$, we have 
\[d\big([a_x^n],\, n\conv(\rot(f))  \big) \le L.\]
\end{prop}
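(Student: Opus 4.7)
\bigskip

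\noindent\textbf{Plan of proof.} The strategy is to decompose each vector $[a_x^n]$ along the direct sum $H_1(S,\R) = \bigoplus_{i \in I_{\mathrm h}} V_i$ provided by Proposition~\ref{Prop:decomposition}, and then to control its projection onto each $V_i$ separately using the heteroclinic chain structure furnished by Proposition~\ref{PropDevImpliesConnec}.

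Fix $x \in S$ and $n \in \N$ and write $[a_x^n] = \sum_{i \in I_{\mathrm h}} v_i$ with $v_i \in V_i$. Let $L_0$ be the constant of Proposition~\ref{PropDevImpliesConnec}, and split $I_{\mathrm h}$ into the set $I_L = \{i : \|v_i\|\ge L_0\}$ and its complement. The contribution $\sum_{i \notin I_L} v_i$ is bounded by $(2g)L_0$, so it suffices to control $\sum_{i \in I_L} v_i$. By Proposition~\ref{PropDevImpliesConnec}, $I_L$ admits an ordering $i_1 \prec \cdots \prec i_\ell$ such that $\cl_{i_1} \to \cdots \to \cl_{i_\ell}$. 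Iterating Proposition~\ref{TheoEquiConnec1} along this chain and recalling that $0 \in \rho_i$ for every $i$ (Proposition~\ref{Prop:decomposition}), we obtain
\[
\conv\Bigl(\{0\} \cup \bigcup_{k=1}^\ell \rho_{i_k}\Bigr) \subset \conv(\rot(f)).
\]

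The key step is then to show that there exist times $0 = s_0 \le s_1 \le \cdots \le s_\ell \le n$ and vectors $w_k \in \rho_{i_k}$ such that, uniformly in $x$ and $n$,
\[
\bigl\| v_{i_k} - (s_k - s_{k-1})\,w_k \bigr\| = O(1) \qquad \text{for every } 1 \le k \le \ell.
\]
The proof of Proposition~\ref{PropDevImpliesConnec} already provides a combinatorial time decomposition: the orbit segment $I_x^{[0,n]}$ accumulates the required intersections with $\gamma_{j_{i_1}}$ before those with $\gamma_{j_{i_2}}$, and so on. During the interval $[s_{k-1}, s_k]$ the orbit essentially ``lives'' near the class $\cl_{i_k}$ --- between the open sets $B_{i_k}^+$ and $B_{i_k}^-$ of Definition~\ref{DefToOpen2} --- and its $V_{i_k}$-component must therefore be within a bounded error of a nonnegative multiple of some element of $\rho_{i_k}$. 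To make this rigorous, I would transfer the homotopical bounded-deviation theorem of \cite{paper1PAF} to the homological setting by composing with the projection $H_1(S,\R) \to V_{i_k}$ and applying it inside the class $\cl_{i_k}$, using the forcing theory of Le~Calvez--Tal to handle the incoming and outgoing heteroclinic connections.

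Granted this key step, we set $t_k := s_k - s_{k-1}$ and $t_0 := n - \sum_{k=1}^\ell t_k \ge 0$, and write
\[
[a_x^n] = \sum_{k=1}^\ell t_k\, w_k \;+\; O(1) \;=\; n \cdot \Bigl( \frac{t_0}{n}\cdot 0 + \sum_{k=1}^\ell \frac{t_k}{n}\, w_k \Bigr) \;+\; O(1),
\]
which exhibits the main term as $n$ times a convex combination of elements of $\{0\} \cup \bigcup_k \rho_{i_k} \subset \conv(\rot(f))$. Hence $d\bigl([a_x^n],\, n\conv(\rot(f))\bigr) = O(1)$, as desired. The main obstacle in this plan lies in the key step: promoting the combinatorial ordering of class visits from Proposition~\ref{PropDevImpliesConnec} into a genuine temporal decomposition with within-class bounded deviations requires a careful use of the forcing theory together with an adaptation of the homotopical bounded-deviation results of \cite{paper1PAF}, and that is where the bulk of the technical work will be concentrated.
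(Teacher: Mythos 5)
Your plan reduces the proposition to a single ``key step'' --- that for each $i_k\in I_L$ there is a time window $[s_{k-1},s_k]$ and a vector $w_k\in\rho_{i_k}$ with $\|v_{i_k}-(s_k-s_{k-1})w_k\|=O(1)$ uniformly in $x$ and $n$ --- and you explicitly leave that step unproved. This is a genuine gap, not a routine verification: the key step \emph{is} the proposition (a bounded-deviation statement for arbitrary orbit segments, now with respect to the cone $[0,s_k-s_{k-1}]\cdot\rho_{i_k}$ inside each $V_{i_k}$), and nothing in Proposition~\ref{PropDevImpliesConnec} supplies it. That proposition only extracts a combinatorial \emph{ordering} of the classes from the fact that the lifted isotopy path crosses many lifts of a single well-chosen geodesic $\gamma_{j_i}$ per class; it gives no quantitative control on the full $V_{i_k}$-component of the displacement, which is dual to intersections with \emph{all} the curves spanning $V_{i_k}$, nor does it prevent the orbit from accruing unbounded $V_{i_k}$-homology outside the window $[s_{k-1},s_k]$ without violating the antisymmetry of $\to$. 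The sentence ``I would transfer the homotopical bounded-deviation theorem of \cite{paper1PAF} \ldots{} using the forcing theory to handle the incoming and outgoing heteroclinic connections'' is a statement of intent, not an argument; the point $x$ is arbitrary and its orbit segment need not be typical for, or even stay near, the class $\cl_{i_k}$ during the window, so there is no measure or invariant set to which such a transfer can be applied directly.

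The paper's proof avoids this per-class decomposition entirely. It decomposes the orbit segment \emph{in time} into at most $2g$ subpaths $\beta_k$, grouped according to which closed geodesic $\gamma_{j'_k}$ is being crossed (after a pigeonhole argument and \cite[Lemma~2.8]{G25Cvx1} to extract disjoint, coherently oriented lifts), and then invokes the realization theorem \cite[Theorem~2.7]{G25Cvx1} to shadow each $\beta_k$ by an actual periodic orbit of period $q_k$ and deck transformation $R_k$ with $\|[\beta_k]-[R_k]\|\le d_0$. Since each $[R_k]/q_k$ is automatically a rotation vector of $f$ and $\sum_k q_k = n+O(1)$, the sum $\sum_k[R_k]$ lies within a bounded distance of $n\conv(\rot(f))$, and the leftover pieces of the path are bounded by the Fact on paths crossing few lifts. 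If you want to salvage your outline, you would have to prove your key step by exactly this kind of shadowing-by-periodic-orbits argument, at which point the decomposition along $\bigoplus_i V_i$ becomes unnecessary.
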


\begin{proof}
Let us apply \cite[Theorem~2.7]{G25Cvx1} and \cite[Lemma~2.8]{G25Cvx1} to all the geodesics $\gamma_j$. This gives us a constant $N'_0\in\N$. Set $M = g N'_0$.

The maps $\cdot \wedge [\gamma_j]$ form a basis of the dual of $H_1(S,\R)$, hence (because the $V_i$ are symplectic) there exists $L>0$ such that for any $i\in I_{\mathrm{h}}$ and any $v_i\in V_i$ such that $\|v_i\|\ge L$, there exists $1\le j\le 2g$ such that $[\gamma_j]\in V_i$ and
\begin{equation}\label{EqBigWedge}
\big|v_i\wedge [\gamma_j]\big|\ge M.
\end{equation}
Using the boundedness of the fundamental domain $D$, we deduce the following fact.

\begin{fact}\label{FactBndedNorm}
There exists $C>0$ such that for any $x\in S$ and $n\in\N$ such that $I_x^{[0,n]}$ crosses at most two lifts of the $(\wt\gamma_j)_{1\le j\le 2g}$, we have $\|[a_x^n]\| \le C$.
\end{fact}

As a consequence, for any $x\in S$ and $n\in\N$ such that $I_{\wt x}^{[0,n]}$ crosses geometrically at most $k$ of the $(\wt\gamma_j)_{1\le j\le 2g}$, we have $\|[I_{\wt x}^{[0,n]}]\| \le kC$.
\bigskip

As a first step, we throw away some intersections to get pairwise disjoint lifts.
Let us apply \cite[Lemma~2.8]{G25Cvx1} to all the closed geodesics $\gamma_1,\dots,\gamma_{2g}$, and $M_0=5$; it gives a constant $N_0$ adapted to all the loops $\gamma_1,\dots,\gamma_{2g}$.

We denote by $0\le t_1\le t_2\le \dots\le t_k$ the intersections between $I_{\wt x}^{[0,n]}$ and all the lifts of the closed geodesics $\gamma_1,\dots,\gamma_{2g}$ (with the convention that for $i\neq i'$, the times $t_i$ and $t_{i'}$ correspond to different geodesics of $\wt S$). Let us group these intersections by sets of at least $2g N_0$ (but at most $4g N_0$) consecutive elements: $E_1 = \{t_1,\dots,t_{2gN_0}\}$, $E_2 = \{t_{2gN_0+1},\dots, t_{4gN_0}\}$, \dots{}
By the pigeonhole principle, on each of these sets $(E_i)_{1\le i\le Q}$ there are at least $N_0$ times corresponding to a single geodesic $\gamma_{j_i}$: there are $t'_{i,1}\le\dots \le t'_{i,N_0}\in E_i$ such that $I_{ x}^{t'_{i,\ell}}\in \gamma_{j_i}$, and that the $I_{\wt x}^{t'_{i,\ell}}$ belong to pairwise different lifts of $\gamma_{j_i}$. 
Applying \cite[Lemma~2.8]{G25Cvx1}, there exist $t''_{i,1}\le \dots \le t''_{i,5}\in E_i$ such that $I_{\wt x}^{t''_{i,\ell}}\in \gamma_{j_i}$, and that the $I_{\wt x}^{t''_{i,\ell}}\in \gamma_{j_i}$ belong to lifts of $\gamma_{j_i}$ that are pairwise disjoint and have the same orientation. 
\bigskip

Let us define some subpaths $\beta_k$ of $I_{\wt x}^{[0,n]}$ by induction (corresponding to pairwise disjoint sub-intervals of the interval of definition $[0,n]$ of $I_{\wt x}^{[0,n]}$). Set $i_0=0$.
Let $j'_1 = j_1$ be the index of the geodesic corresponding to $E_1$.
Consider the largest index $i_1$ such that $j_{i_1} = j'_1$ (this is the last time the geodesic corresponding to $E_{i_1}$ is the same as the geodesic corresponding to $E_1$). 
We then set $\beta_1 = I_{\wt x}^{[t''_{1,2}, t''_{i_1,3}]}$.

Now, if $i_1<Q$ (\emph{i.e.}~if $j'_2$ can be defined), define $j'_2 = j_{i_1+1}$. 
Consider the largest index $i_2$ such that $j_{i_2} = j'_2$ and set $\beta_2 = I_{\wt x}^{[t''_{i_1+1,2}, t''_{i_2,3}]}$.

One can continue by induction to build subpaths $(\beta_k)_{1\le k\le k_0}$ of $I_{\wt x}^{[0,n]}$. Note that by construction $k_0\le 2g$.

Now, for each $k$ one can apply \cite[Theorem~2.7]{G25Cvx1} to the path $I_{\wt x}^{[t''_{i_{k-1}+1,1}, t''_{i_k,5}]}$, the geodesic $\gamma_{j'_k}$ and to the intersection times $t''_{i_{k-1}+1,1}, t''_{i_{k-1}+1,2}, t''_{i_k,3}, t''_{i_k,4}, t''_{i_k,5}$. This theorem implies that there exists $m_1 > 0$, $d_0>0$ and for each $k$, there exists a periodic point $z_k$ of period $q_k\le  t''_{i_k,5} - t''_{i_{k-1}+1,1} + m_1$ and such that $\wt f^{q_k}(\wt z_k) = R_k \wt z_k$, where $R_k\in \G$ satisfies:
\begin{equation}\label{EqDistBetaRk}
\big\|[\beta_k] - [R_k]\big\| \le d_0.
\end{equation}

The complement of the intervals $[t''_{i_{k-1}+1,2}, t''_{i_k,3}]$ in $[0,n]$ is made of at most $2g+2$ intervals $J_1,\dots, J_p$, for each of them the isotopy path $I_{\wt x}^{J_\ell}$ intersects at most $8gN_0$ lifts of the $\gamma_i$. By Fact~\ref{FactBndedNorm}, these paths satisfy $\|[I_{\wt x}^{J_\ell}]\| \le 4gN_0C$. This implies that 
\begin{equation}\label{EqDistAlphaBeta}
\Big\|[a_x^n] - \sum_{k=1}^{k_0} [\beta_k]\Big\|\le (2g+1)4gN_0C \le 12g^2N_0C.
\end{equation}
Moreover, 
\[0\le \sum_{k=1}^{k_0} q_k \le \sum_{k=1}^{k_0}\big(t''_{i_k,5} - t''_{i_{k-1}+1,1} + m_1\big) \le n+2gm_1.\]
Combined with
\[\sum_{k=1}^{k_0} [R_k] = n\underbrace{\left(\sum_{k=1}^{k_0}\frac{q_k}{\sum_j q_j} \frac{[R_k]}{q_k}\right)}_{\in \conv(\rot(f))} + \Big(\sum_j q_j-n\Big)\underbrace{\left(\sum_{k=1}^{k_0}\frac{q_k}{\sum_j q_j} \frac{[R_k]}{q_k}\right)}_{\in \conv(\rot(f))},\]
this implies that (recall that $0\in n\conv(\rot(f))$)
\[d\left(\sum_{k=1}^{k_0} [R_k], n\conv(\rot(f))\right) \le 2gm_1 \diam\big(\conv(\rot(f))\big).\]
Using \eqref{EqDistBetaRk} and \eqref{EqDistAlphaBeta}, this gives
\[d\big([a_x^n] , n\conv(\rot(f))\big) \le 2gm_1 \diam\big(\conv(\rot(f))\big) + 12g^2N_0C + 2g d_0.\]
\end{proof}

\section{Bounds on the number of convex sets and proof of Theorems~\ref{MainTheoUnionConvex} and \ref{MainTheoConvex}, and Propositions~\ref{CoroTheoConvexTrans} and \ref{CoroTheoConvexNW}}\label{SecRefRefRef}

\subsection{The graph $G$ contains all the rotational information on $f$}

Let us recall that the rotation set of a graph of rectangles is defined in \cite[Definition~3.9]{G25Cvx1}. The oriented graph $G$ was built in \cite[Subsection~4.1]{G25Cvx1} to characterize part of the rotational behaviour of $f$; let us summarize the main properties of this graph. 
Its vertices are rectangles of $\wt S$ as defined in \cite[Definition~3.1]{G25Cvx1}, and we put an oriented edge $R_1\to R_2$ if there exists $n\ge 0$ and a deck transformation $T\in\G$ such that the intersection $\wt f^{n}(R_1)\cap TR_{2}$ is Markovian (see \cite[Definition~3.8]{G25Cvx1}). 
The collection of rectangles $(R_\omega)_{\omega\in\Omega}$ is chosen so that each $R_\omega$ is a rotational horseshoe associated to a deck transformation $T_\omega\in\G$ (see \cite[Definition~3.7]{G25Cvx1}), so that the family of associated rotation vectors is dense in $\bigcup_{i\in I_{\mathrm h}}\rho_i$.

The following proposition asserts that when $f$ has big rotation set, the graph $G$ contains all the rotational behaviour of $f$.

\begin{prop}\label{PropRotFRotG}
If $f\in\Homeo_0(S)$ has big rotation set, then $\rot(f) = \overline{\rot(G)}$.
\end{prop}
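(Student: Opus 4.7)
The plan is to prove the two inclusions $\rot(G) \subseteq \rot(f)$ and $\rot(f) \subseteq \overline{\rot(G)}$ separately; since $\rot(f)$ is closed (Lemma~\ref{LemInclusEnsRot}), the first inclusion upgrades to $\overline{\rot(G)} \subseteq \rot(f)$.

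For $\rot(G) \subseteq \rot(f)$ I would rely on the very definition of $G$. Each vertex $R_\omega$ is a rotational horseshoe tagged by a deck transformation $T_\omega$, and each edge comes from a Markovian intersection $\wt f^n(R_1) \cap T R_2$. Symbolic dynamics on the resulting graph (as laid out in \cite[Subsection~3.1]{G25Cvx1}) produces, for every bi-infinite path in $G$, a genuine $f$-orbit whose homological displacement matches the concatenation of edge labels. Consequently any rotation vector of a path in $G$ is realised by an actual orbit of $f$ and therefore lies in $\rot(f)$. The closure is then absorbed by compactness of $\rot(f)$.

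For the reverse inclusion I would extract, from an orbit segment of $f$ approximately realising a given $\rho \in \rot(f)$, a closed walk in $G$ with approximately the same rotation vector. Concretely, given $x \in S$ and $n$ large with $\frac{1}{n}[a_x^n]$ close to $\rho$, I would follow the decomposition used in the proof of Proposition~\ref{PropBnddDevConvHull}: subdivide $I_{\wt x}^{[0,n]}$ at the carefully selected intersections with lifts of the geodesics $\gamma_j$ into subpaths $\beta_1, \dots, \beta_{k_0}$, then apply \cite[Theorem~2.7]{G25Cvx1} to each $\beta_m$ to produce a periodic point $z_m$ of period $q_m$ satisfying $\wt f^{q_m}(\wt z_m) = R_m \wt z_m$ with $\|[\beta_m] - [R_m]\| \le d_0$. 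Each $z_m$ sits in one of the horseshoes $R_{\omega_m}$ --- that is, in a vertex of $G$ --- and the loop based at $R_{\omega_m}$ encoding its orbit contributes $[R_m]$ over $q_m$ iterates.

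To glue these loops into a single walk in $G$ I would use that consecutive blocks correspond to vertices $R_{\omega_m}, R_{\omega_{m+1}}$ connected by a (possibly empty) path in $G$: within a fixed class $\cl_i$ this is immediate from the strong connectedness of the corresponding component, and between classes the required edge $\cl_{i_m} \to \cl_{i_{m+1}}$ is provided by Proposition~\ref{PropDevImpliesConnec} combined with the graph-theoretic characterisation \cite[Proposition~4.18]{G25Cvx1}. The resulting closed walk in $G$ has rotation vector differing from $\frac{1}{n}[a_x^n]$ by $O(1/n)$, and letting $n \to \infty$ yields $\rho \in \overline{\rot(G)}$. The main obstacle I anticipate is the bookkeeping on these gluing edges: one must control both their number and their homological cost so that their contribution is negligible compared to $n$. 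This should follow from the uniform bound $d_0$ furnished by \cite[Theorem~2.7]{G25Cvx1} and from the deviations bound in Proposition~\ref{PropBnddDevConvHull}, but the assembly requires care.
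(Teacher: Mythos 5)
Your first inclusion is fine and matches the paper, which simply quotes \cite[Proposition~3.10]{G25Cvx1} for $\overline{\rot(G)}\subset\rot(f)$. The problem is in your reverse inclusion, where there is a genuine gap: the periodic points $z_m$ produced by \cite[Theorem~2.7]{G25Cvx1} are \emph{not} vertices of $G$. The vertices of $G$ form a fixed family $(R_\omega)_{\omega\in\Omega}$ of rotational horseshoes chosen in advance (so that their rotation vectors are merely \emph{dense} in $\bigcup_{i\in I_{\mathrm h}}\rho_i$), and $\rot(G)$ is computed purely from the combinatorics and the deck-transformation labels of that graph. A periodic point obtained by shadowing an arbitrary orbit segment has no reason to lie in any $R_{\omega}$, so the sentence ``the loop based at $R_{\omega_m}$ encoding its orbit contributes $[R_m]$ over $q_m$ iterates'' does not make sense as stated: there is no such loop in $G$. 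To repair this you would have to approximate each $[R_m]/q_m\in\rho_{i_m}$ by rotation vectors of genuine vertices in the strongly connected component of class $i_m$ and control the accumulated error --- which amounts to re-proving \cite[Proposition~4.21]{G25Cvx1}. A secondary issue is the gluing order: consecutive blocks must visit classes in an order compatible with the (antisymmetric) relation $\to$, which requires checking that each block crosses enough lifts to trigger the $\alpha^-/\alpha^+$ argument; this is true but is extra bookkeeping you have not supplied.

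The paper avoids all of this. It writes $v=\sum_{i\in I_v}v_i$ with $v_i\in V_i\setminus\{0\}$, applies Proposition~\ref{PropDevImpliesConnec} to a long realising orbit segment to obtain a chain $\cl_{i_1}\to\cdots\to\cl_{i_\ell}$ on $I_v$, invokes \cite[Proposition~4.21]{G25Cvx1} to get $\conv\big(\bigcup_k\overline{\rho_{i_k}}\big)\subset\overline{\rot(G)}$, and then observes that $v\in\rot(f)\cap\bigoplus_k V_{i_k}$ forces $v$ to lie in that convex hull (project $\conv(\rot(f))$ onto $\bigoplus_k V_{i_k}$ parallel to the remaining $V_i$ and use that each $\rho_i$ contains $0$). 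No explicit walk in $G$ is ever constructed. I would recommend abandoning the block-by-block construction and adopting this ``connections plus convexity'' route, which uses only results already in hand.
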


\begin{proof}
The inclusion $\rot(f) \supset \overline{\rot(G)}$ is \cite[Proposition~3.10]{G25Cvx1}.

Let us prove the remaining inclusion $\rot(f) \subset \overline{\rot(G)}$. Let $v\in \rot(f)$.
Because of the decomposition $H_1(S, \R) = \bigoplus_{i\in I_{\mathrm{h}}} V_i$, one can write $v = \sum_{i\in I_v} v_i$ in a unique way, where for any $i\in I_v\subset I_{\mathrm h}$ one has $v_i \in V_i\setminus\{0\}$. The vector $v$ is realised by orbit segments: there exists $(x_k)_k\in S^\N$ and $n_k\to +\infty$ such that 
\[\lim\limits_{k \to +\infty}\frac{1}{n_k}\sum_{i=0}^{n_k-1}[a_{f^{i}(x_k)}] = v.\]
Consider the constant $L$ given by Proposition~\ref{PropDevImpliesConnec}. If $k$ is large enough, then for any $i\in I_v$ we have $\|n_k v_i\|\ge L$. Applying Proposition~\ref{PropDevImpliesConnec} we get that there exists an ordering $i_1\prec \dots \prec i_\ell$ of $I_v$ such that $\cl_{i_1}\overset\F\to\cdots \overset\F\to \cl_{i_\ell}$. By \cite[Lemma~4.10]{G25Cvx1}, this implies that 
\[\cl_{i_1}\to\cdots \to \cl_{i_\ell}.\]
By \cite[Proposition~4.21]{G25Cvx1}, we deduce that
\begin{equation}\label{EqConnecClass}
\conv\bigg(\bigcup_{k=1}^\ell\overline{\rho_{i_k}}\bigg) \subset \overline{\rot(G)}.
\end{equation}

Moreover, as $v\in \rot(f)\cap \bigoplus _{k=1}^\ell V_{i_k}$, we have that 
\[v\in \conv\bigg(\bigcup_{k=1}^\ell\overline{\rho_{i_k}}\bigg) \subset \overline{\rot(G)};\] 
combined with \eqref{EqConnecClass} this implies that $v\in \overline{\rot(G)}$.
%
\end{proof}

\subsection{The graph $\Tr$ and associated open invariant sets}\label{SubSecGrafT}

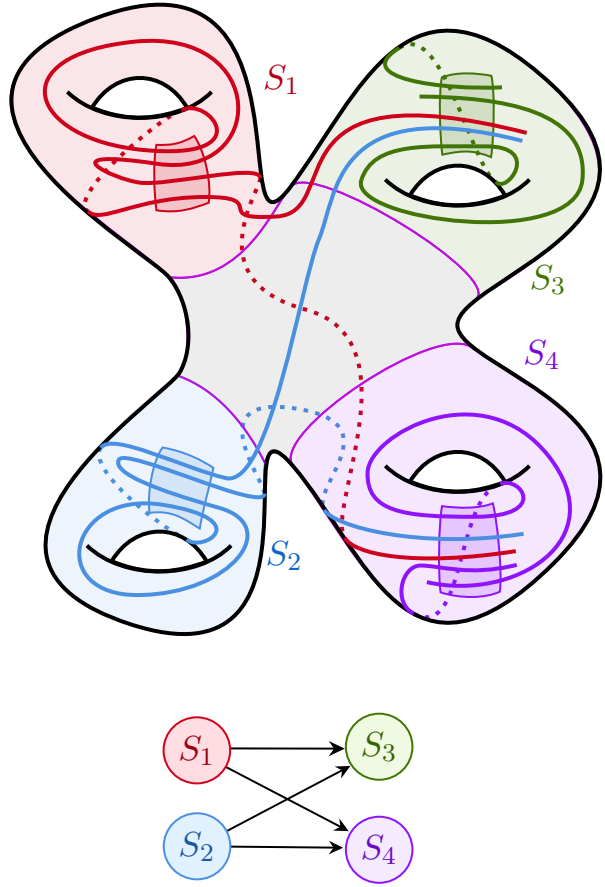
\begin{figure}[!t]
\begin{center}

\tikzset{every picture/.style={line width=0.75pt}} 
\vspace{-25pt}
\begin{tikzpicture}[x=0.75pt,y=0.75pt,yscale=-1.2,xscale=1.2]

\draw [color={rgb, 255:red, 208; green, 2; blue, 27 }  ,draw opacity=1 ][line width=1.5]  [dash pattern={on 1.69pt off 2.76pt}]  (337.99,98.39) .. controls (339.05,101.98) and (310.65,140.89) .. (356.82,153.56) .. controls (402.99,166.22) and (364.99,237.56) .. (373.15,249.22) ;
\draw [color={rgb, 255:red, 74; green, 144; blue, 226 }  ,draw opacity=1 ][line width=1.5]  [dash pattern={on 1.69pt off 2.76pt}]  (339.82,230.72) .. controls (339.82,225.72) and (311.99,188.89) .. (352.65,195.06) .. controls (393.32,201.22) and (358.65,227.89) .. (364.65,237.56) ;
\draw [color={rgb, 255:red, 208; green, 2; blue, 27 }  ,draw opacity=1 ][line width=1.5]  [dash pattern={on 1.69pt off 2.76pt}]  (305.13,70.02) .. controls (295,74.02) and (257.28,106.43) .. (266.4,112.43) ;
\draw  [color={rgb, 255:red, 189; green, 16; blue, 224 }  ,draw opacity=1 ][fill={rgb, 255:red, 208; green, 2; blue, 27 }  ,fill opacity=0.1 ] (270.68,31.05) .. controls (361.88,-3.73) and (323.13,141.52) .. (350.7,102.01) .. controls (343.88,109.77) and (331.13,143.52) .. (300,140) .. controls (289.13,125.34) and (178.25,65.77) .. (270.68,31.05) -- cycle ;
\draw [color={rgb, 255:red, 74; green, 144; blue, 226 }  ,draw opacity=1 ][line width=1.5]  [dash pattern={on 1.69pt off 2.76pt}]  (271.67,211.52) .. controls (265.89,219.74) and (293.65,243.17) .. (307.39,249.96) ;
\draw  [color={rgb, 255:red, 74; green, 144; blue, 226 }  ,draw opacity=1 ][fill={rgb, 255:red, 74; green, 144; blue, 226 }  ,fill opacity=0.1 ] (283.96,285.47) .. controls (353.22,307.82) and (336.93,229.54) .. (340.03,218.06) .. controls (334.79,207.43) and (322.47,187.61) .. (303.85,181.34) .. controls (294.03,193.7) and (198.36,257.82) .. (283.96,285.47) -- cycle ;
\draw  [color={rgb, 255:red, 189; green, 16; blue, 224 }  ,draw opacity=1 ][fill={rgb, 255:red, 74; green, 74; blue, 74 }  ,fill opacity=0.1 ] (350.7,102.01) .. controls (360.02,90.63) and (440.84,140.81) .. (427.33,149.62) .. controls (421.93,155.02) and (420.7,154.01) .. (420,160) .. controls (420.7,166.3) and (424.73,168.02) .. (426.73,170.02) .. controls (413.02,157.15) and (340.22,206.35) .. (351.84,219.44) .. controls (349.95,215.61) and (342.03,208.92) .. (340.03,218.06) .. controls (331.75,200.92) and (319.49,187.16) .. (303.85,181.34) .. controls (311.67,171.52) and (306.22,143.16) .. (300,140) .. controls (331.63,143.7) and (344.13,108.87) .. (350.7,102.01) -- cycle ;
\draw [color={rgb, 255:red, 144; green, 19; blue, 254 }  ,draw opacity=1 ][line width=1.5]  [dash pattern={on 1.69pt off 2.76pt}]  (401.78,281.01) .. controls (423.62,293.01) and (419.05,231.88) .. (437.87,226.65) ;
\draw  [color={rgb, 255:red, 189; green, 16; blue, 224 }  ,draw opacity=1 ][fill={rgb, 255:red, 144; green, 19; blue, 254 }  ,fill opacity=0.1 ] (426.74,170.02) .. controls (413.02,157.02) and (340.62,205.95) .. (351.85,219.44) .. controls (380.01,256.6) and (401.75,319.77) .. (460,260) .. controls (505.44,214.32) and (460.87,190.89) .. (426.74,170.02) -- cycle ;
\draw [color={rgb, 255:red, 65; green, 117; blue, 5 }  ,draw opacity=1 ][line width=1.5]  [dash pattern={on 1.69pt off 2.76pt}]  (395.87,43.9) .. controls (413.8,31.78) and (425.77,93.55) .. (435.99,99.99) ;
\draw  [color={rgb, 255:red, 189; green, 16; blue, 224 }  ,draw opacity=1 ][fill={rgb, 255:red, 65; green, 117; blue, 5 }  ,fill opacity=0.1 ] (350.7,102.01) .. controls (360.29,90.63) and (440.66,140.63) .. (427.33,149.62) .. controls (470.43,122.1) and (499.99,101.88) .. (460,60) .. controls (400.8,2.4) and (377.86,64.4) .. (350.7,102.01) -- cycle ;
\draw  [line width=1.5]  (300,140) .. controls (290.95,128.79) and (178.28,65.05) .. (270.68,31.05) .. controls (363.08,-2.95) and (322.37,140.13) .. (350.7,102.01) .. controls (379.03,63.9) and (399.76,2.88) .. (460,60) .. controls (520.24,117.12) and (420.28,140.94) .. (420,160) .. controls (419.72,179.06) and (520.82,199.39) .. (460,260) .. controls (399.18,320.61) and (382.54,255.97) .. (351.85,219.44) .. controls (321.15,182.92) and (371.3,314.13) .. (283.97,285.47) .. controls (196.63,256.8) and (298.22,190.43) .. (304.4,180.07) .. controls (310.58,169.7) and (309.05,151.21) .. (300,140) -- cycle ;
\draw  [fill={rgb, 255:red, 255; green, 255; blue, 255 }  ,fill opacity=1 ][line width=1.5]  (439.89,105.85) .. controls (428.41,109.97) and (416.62,112.39) .. (400.3,105.66) .. controls (410.28,89.13) and (430.17,89.03) .. (439.89,105.85) -- cycle ;
\draw [line width=1.5]    (390,99.08) .. controls (405.08,113.81) and (436.08,113.47) .. (450,99.08) ;

\draw  [fill={rgb, 255:red, 255; green, 255; blue, 255 }  ,fill opacity=1 ][line width=1.5]  (439.89,225.85) .. controls (428.41,229.97) and (416.62,232.39) .. (400.3,225.66) .. controls (410.28,209.13) and (430.17,209.03) .. (439.89,225.85) -- cycle ;
\draw [line width=1.5]    (390,219.08) .. controls (405.08,233.81) and (436.08,233.47) .. (450,219.08) ;

\draw  [color={rgb, 255:red, 65; green, 117; blue, 5 }  ,draw opacity=1 ][fill={rgb, 255:red, 65; green, 117; blue, 5 }  ,fill opacity=0.15 ] (413.55,56.29) .. controls (419.89,53.73) and (428.12,54.49) .. (434.12,55.57) .. controls (435.66,61.26) and (435.29,81.15) .. (434.02,89.67) .. controls (427.77,87.17) and (423.52,86.42) .. (413.77,88.67) .. controls (412.69,81.25) and (412.16,62.7) .. (413.55,56.29) -- cycle ;
\draw [color={rgb, 255:red, 65; green, 117; blue, 5 }  ,draw opacity=1 ][line width=1.5]    (404.26,64.29) .. controls (438.26,64.57) and (475.1,69.77) .. (471.99,99.33) .. controls (468.88,128.88) and (375.04,115.91) .. (380.02,99.42) .. controls (385,82.92) and (408.02,85.28) .. (431.77,85.17) .. controls (455.53,85.05) and (440.43,105.55) .. (435.99,99.99) ;
\draw [color={rgb, 255:red, 65; green, 117; blue, 5 }  ,draw opacity=1 ][line width=1.5]    (438.55,60.43) .. controls (413.83,61.57) and (374.99,56.65) .. (395.87,43.9) ;
\draw  [color={rgb, 255:red, 144; green, 19; blue, 254 }  ,draw opacity=1 ][fill={rgb, 255:red, 144; green, 19; blue, 254 }  ,fill opacity=0.15 ] (413.64,235.97) .. controls (420.44,236.95) and (429.06,237.11) .. (434.75,234.8) .. controls (436.29,240.49) and (438.91,263.31) .. (437.64,271.83) .. controls (431.94,274.59) and (421.64,274.9) .. (413.02,272.13) .. controls (411.94,264.71) and (412.25,242.39) .. (413.64,235.97) -- cycle ;
\draw [color={rgb, 255:red, 144; green, 19; blue, 254 }  ,draw opacity=1 ][line width=1.5]    (407.33,267.83) .. controls (443.48,276.13) and (485.41,250.34) .. (458.02,215.11) .. controls (430.64,179.88) and (382.94,205.88) .. (382.64,222.03) .. controls (382.33,238.18) and (396.87,240.59) .. (427.64,240.44) .. controls (458.41,240.29) and (446.33,223.26) .. (437.87,226.65) ;
\draw [color={rgb, 255:red, 144; green, 19; blue, 254 }  ,draw opacity=1 ][line width=1.5]    (401.78,281.01) .. controls (393.07,276.54) and (396.68,261.03) .. (404.54,260.88) .. controls (412.4,260.72) and (424.64,265.66) .. (444.71,260.59) ;
\draw  [fill={rgb, 255:red, 255; green, 255; blue, 255 }  ,fill opacity=1 ][line width=1.5]  (307.64,69.1) .. controls (296.16,73.22) and (284.37,75.64) .. (268.05,68.91) .. controls (278.03,52.38) and (297.92,52.28) .. (307.64,69.1) -- cycle ;
\draw [line width=1.5]    (257.75,62.33) .. controls (272.83,77.06) and (303.83,76.72) .. (317.75,62.33) ;

\draw  [color={rgb, 255:red, 208; green, 2; blue, 27 }  ,draw opacity=1 ][fill={rgb, 255:red, 208; green, 2; blue, 27 }  ,fill opacity=0.15 ] (294.82,84.53) .. controls (301.03,84.76) and (304.53,84.25) .. (310.53,80.82) .. controls (314.82,86.82) and (317.92,100.63) .. (316.53,108.82) .. controls (309.81,111.24) and (304.25,113.39) .. (294.82,111.96) .. controls (293.74,104.54) and (293.43,90.95) .. (294.82,84.53) -- cycle ;
\draw [color={rgb, 255:red, 208; green, 2; blue, 27 }  ,draw opacity=1 ][line width=1.5]    (337.99,98.39) .. controls (337.82,92.06) and (299.25,101.88) .. (287.99,100.39) .. controls (276.72,98.9) and (248.97,85.54) .. (295.06,92.47) .. controls (341.15,99.39) and (341.97,35.68) .. (285.88,41.27) .. controls (229.78,46.86) and (241.32,82.22) .. (289.65,86.72) .. controls (337.99,91.22) and (316.88,64.02) .. (305.13,70.02) ;
\draw [color={rgb, 255:red, 208; green, 2; blue, 27 }  ,draw opacity=1 ][line width=1.5]    (448.81,79.52) .. controls (339.31,51.52) and (373.4,112.77) .. (344.83,114.57) .. controls (316.27,116.37) and (342.98,106.19) .. (314.49,106.39) .. controls (285.99,106.59) and (271.78,117.93) .. (266.4,112.43) ;
\draw  [color={rgb, 255:red, 0; green, 0; blue, 0 }  ,draw opacity=1 ][fill={rgb, 255:red, 255; green, 255; blue, 255 }  ,fill opacity=1 ][line width=1.5]  (315.78,259.25) .. controls (304.3,263.37) and (292.51,265.79) .. (276.19,259.05) .. controls (286.17,242.52) and (306.06,242.42) .. (315.78,259.25) -- cycle ;
\draw [color={rgb, 255:red, 0; green, 0; blue, 0 }  ,draw opacity=1 ][line width=1.5]    (265.89,252.48) .. controls (280.98,267.2) and (311.98,266.86) .. (325.89,252.48) ;

\draw  [color={rgb, 255:red, 74; green, 144; blue, 226 }  ,draw opacity=1 ][fill={rgb, 255:red, 74; green, 144; blue, 226 }  ,fill opacity=0.15 ] (297.39,211.39) .. controls (306.82,213.1) and (313.96,216.82) .. (318.53,219.96) .. controls (316.53,229.1) and (314.2,237.2) .. (312.82,245.39) .. controls (306.53,241.39) and (301.1,237.39) .. (291.96,238.82) .. controls (290.88,231.4) and (293.1,217.1) .. (297.39,211.39) -- cycle ;
\draw [color={rgb, 255:red, 74; green, 144; blue, 226 }  ,draw opacity=1 ][line width=1.5]    (339.82,230.72) .. controls (339.82,239.06) and (281.82,210.89) .. (279.15,216.56) .. controls (276.49,222.22) and (294.82,229.22) .. (320.99,236.72) .. controls (347.15,244.22) and (326.56,272.73) .. (287.56,273.25) .. controls (248.56,273.76) and (258.21,229.17) .. (297.32,235.39) .. controls (336.43,241.61) and (313.96,255.63) .. (307.39,249.96) ;
\draw [color={rgb, 255:red, 74; green, 144; blue, 226 }  ,draw opacity=1 ][line width=1.5]    (271.67,211.52) .. controls (276.49,204.72) and (305.19,219.76) .. (323.65,224.22) .. controls (342.11,228.68) and (354.8,142.31) .. (362.43,124.04) .. controls (370.06,105.77) and (367.81,62.52) .. (447.04,82.88) ;
\draw [color={rgb, 255:red, 208; green, 2; blue, 27 }  ,draw opacity=1 ][line width=1.5]    (373.15,249.22) .. controls (376.82,256.06) and (401.32,261.56) .. (444.31,254.83) ;
\draw [color={rgb, 255:red, 74; green, 144; blue, 226 }  ,draw opacity=1 ][line width=1.5]    (364.65,237.56) .. controls (368.32,244.39) and (404.56,254.28) .. (447.54,247.56) ;

\draw (448.58,133.4) node [anchor=north west][inner sep=0.75pt]  [color={rgb, 255:red, 65; green, 117; blue, 5 }  ,opacity=1 ,xscale=1.2,yscale=1.2]  {$S_{3}$};
\draw (446.08,179.98) node [anchor=south west] [inner sep=0.75pt]  [color={rgb, 255:red, 118; green, 3; blue, 220 }  ,opacity=1 ,xscale=1.2,yscale=1.2]  {$S_{4}$};
\draw (338.87,256.87) node [anchor=west] [inner sep=0.75pt]  [color={rgb, 255:red, 11; green, 99; blue, 203 }  ,opacity=1 ,xscale=1.2,yscale=1.2]  {$S_{2}$};
\draw (337.73,57.3) node [anchor=west] [inner sep=0.75pt]  [color={rgb, 255:red, 177; green, 1; blue, 21 }  ,opacity=1 ,xscale=1.2,yscale=1.2]  {$S_{1}$};

\end{tikzpicture}

\begin{tikzpicture}[x=0.75pt,y=0.75pt,yscale=-1,xscale=1]

\draw    (298.75,81.32) -- (372.68,120.24) ;
\draw [shift={(375.33,121.64)}, rotate = 207.77] [fill={rgb, 255:red, 0; green, 0; blue, 0 }  ][line width=0.08]  [draw opacity=0] (7.14,-3.43) -- (0,0) -- (7.14,3.43) -- (4.74,0) -- cycle    ;
\draw    (299.59,129.26) -- (369.5,129.94) ;
\draw [shift={(372.5,129.97)}, rotate = 180.56] [fill={rgb, 255:red, 0; green, 0; blue, 0 }  ][line width=0.08]  [draw opacity=0] (7.14,-3.43) -- (0,0) -- (7.14,3.43) -- (4.74,0) -- cycle    ;
\draw    (300.73,128.89) -- (373.52,90.21) ;
\draw [shift={(376.17,88.81)}, rotate = 152.02] [fill={rgb, 255:red, 0; green, 0; blue, 0 }  ][line width=0.08]  [draw opacity=0] (7.14,-3.43) -- (0,0) -- (7.14,3.43) -- (4.74,0) -- cycle    ;
\draw    (370.33,80.13) -- (300,80) ;
\draw [shift={(373.33,80.14)}, rotate = 180.11] [fill={rgb, 255:red, 0; green, 0; blue, 0 }  ][line width=0.08]  [draw opacity=0] (8.04,-3.86) -- (0,0) -- (8.04,3.86) -- (5.34,0) -- cycle    ;

\draw  [color={rgb, 255:red, 208; green, 2; blue, 27 }  ,draw opacity=1 ][fill={rgb, 255:red, 255; green, 221; blue, 227 }  ,fill opacity=1 ]  (299.5, 81) circle [x radius= 16.62, y radius= 16.62]   ;
\draw (299.5,81) node  [color={rgb, 255:red, 152; green, 0; blue, 17 }  ,opacity=1 ,xscale=1.2,yscale=1.2]  {$S_{1}$};
\draw  [color={rgb, 255:red, 74; green, 144; blue, 226 }  ,draw opacity=1 ][fill={rgb, 255:red, 224; green, 241; blue, 255 }  ,fill opacity=1 ]  (299.59, 129.26) circle [x radius= 16.62, y radius= 16.62]   ;
\draw (299.59,129.26) node  [color={rgb, 255:red, 26; green, 87; blue, 156 }  ,opacity=1 ,xscale=1.2,yscale=1.2]  {$S_{2}$};
\draw  [color={rgb, 255:red, 144; green, 19; blue, 254 }  ,draw opacity=1 ][fill={rgb, 255:red, 245; green, 235; blue, 255 }  ,fill opacity=1 ]  (390.5, 131) circle [x radius= 16.62, y radius= 16.62]   ;
\draw (390.5,131) node  [color={rgb, 255:red, 107; green, 7; blue, 193 }  ,opacity=1 ,xscale=1.2,yscale=1.2]  {$S_{4}$};
\draw  [color={rgb, 255:red, 65; green, 117; blue, 5 }  ,draw opacity=1 ][fill={rgb, 255:red, 240; green, 250; blue, 226 }  ,fill opacity=1 ]  (390.5, 79.17) circle [x radius= 16.62, y radius= 16.62]   ;
\draw (390.5,79.17) node  [color={rgb, 255:red, 54; green, 100; blue, 1 }  ,opacity=1 ,xscale=1.2,yscale=1.2]  {$S_{3}$};

\end{tikzpicture}

\caption{An example of a homeomorphism of a surface of genus 4 having 5 horseshoe classes $S_i$, and some horseshoes in these classes, with the graph $\Tr$ associated to it. The images of the rectangles are represented as thick lines, we suppose that all the pictured intersections are Markovian. 
\\
In this case, the ergodic rotation set of $f$ is the union of 4 pieces that are of dimension 2 (their closures are convex sets) and the rotation set of $f$ is the union of 4 convex sets, each of dimension 4. The oriented graph $\Tr$ has no oriented loop but is not a tree.}\label{FigTree}
\end{center}
\end{figure}

Let us adapt the definition of the graph $\Tr$ given in \cite[Subsection~4.4]{G25Cvx1}. An example of such a graph $\Tr$ is given in Figure~\ref{FigTree}.
For a homeomorphism with big rotation set, the vertices of the graph $\Tr$ are the surfaces $S_i$ for $i\in I_{\mathrm h}$; we put an edge $S_i - S_j$ if $i\neq j$ and $\overline S_i\cap \overline S_j\neq \emptyset$ (in other words, if they share a boundary component). 
As the correspondence $\cl_i\leftrightarrow S_i$ is 1 to 1 in $I_{\mathrm h}$, we will sometimes label the vertices of $\Tr$ with the classes $\cl_i$. 

We now orient some of the edges of $\Tr$: we change $S_i - S_j$ by $S_i\to S_j$ if $\cl_i\to \cl_j$.
The orientation of edges of $\Tr$ is well defined because $\to $ is an order relation (\cite[Proposition~4.18]{G25Cvx1}). This also shows that having only the data of relations $\cl_i\to \cl_j$ for $S_i$ and $S_j$ adjacent in the graph $\Tr$ allows to recover the whole relation $\to$.
\bigskip

Proposition~\ref{PropRotFRotG} and \cite[Proposition 4.21]{G25Cvx1} imply directly the following:

\begin{coro}\label{CoroPropRotFRotG}
If $f\in\Homeo_0(S)$ has big rotation set, then
\[\rot(f) = \bigcup_{\mathrm{p} \text{ path in }\Tr}\conv\big(\{\rho_i\}_{i\in \mathrm{p}}\big).\]
\end{coro}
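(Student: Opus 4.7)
The plan is to directly combine Proposition~\ref{PropRotFRotG} (which identifies $\rot(f)$ with $\overline{\rot(G)}$) with \cite[Proposition~4.21]{G25Cvx1} (which controls $\rot(G)$ in terms of chains $\cl_i\to\cl_j$).

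First I would prove the easy inclusion $\supset$. Let $\mathrm p=(i_1,\dots,i_k)$ be a directed path in $\Tr$. By definition of $\Tr$, each edge of $\mathrm p$ corresponds to a relation $\cl_{i_m}\to\cl_{i_{m+1}}$, so by transitivity of $\to$ (\cite[Proposition~4.18]{G25Cvx1}) we obtain the full chain $\cl_{i_1}\to\cdots\to\cl_{i_k}$. Applying \cite[Proposition~4.21]{G25Cvx1} gives $\conv(\{\rho_{i_m}\}_{m})\subset\overline{\rot(G)}$, which equals $\rot(f)$ by Proposition~\ref{PropRotFRotG}.

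For the reverse inclusion $\subset$, I would mimic the argument in the second half of the proof of Proposition~\ref{PropRotFRotG}. Fix $v\in\rot(f)$ and decompose $v=\sum_{i\in I_v}v_i$ along $H_1(S,\R)=\bigoplus_{i\in I_{\mathrm h}}V_i$. Choose an approximating sequence of orbit segments $x_k$, $n_k\to\infty$ with $n_k^{-1}[a_{x_k}^{n_k}]\to v$. For $k$ large, $\|n_kv_i\|$ exceeds the constant $L$ of Proposition~\ref{PropDevImpliesConnec} for every $i\in I_v$, so that proposition provides an ordering $i_1\prec\cdots\prec i_\ell$ of $I_v$ with $\cl_{i_1}\to\cdots\to\cl_{i_\ell}$. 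Since $v$ lies in $\bigoplus_{k}V_{i_k}$, \cite[Proposition~4.21]{G25Cvx1} places $v$ inside $\conv(\bigcup_k\overline{\rho_{i_k}})$.

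It remains to package this chain as a directed path in $\Tr$. Here is what I expect to be the main (minor) obstacle: a relation $\cl_{i_k}\to\cl_{i_{k+1}}$ does \emph{not} require the surfaces $S_{i_k}$ and $S_{i_{k+1}}$ to share a boundary, whereas edges of $\Tr$ are only between adjacent surfaces. However, as noted just after the definition of $\Tr$ in Subsection~\ref{SubSecGrafT}, the whole relation $\to$ is recovered from the adjacency data of $\Tr$, hence every arrow $\cl_{i_k}\to\cl_{i_{k+1}}$ can be refined into a directed path in $\Tr$ through adjacent surfaces. Concatenating these refinements yields a directed path $\mathrm p$ in $\Tr$ whose vertex set contains $\{i_1,\dots,i_\ell\}$, and
\[v\in\conv\bigl(\{\rho_{i_k}\}_{k=1}^\ell\bigr)\subset\conv\bigl(\{\rho_i\}_{i\in\mathrm p}\bigr),\]
which concludes the proof.
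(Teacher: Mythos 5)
Your argument is correct and follows the same route as the paper, which obtains the corollary as a direct combination of Proposition~\ref{PropRotFRotG} and \cite[Proposition~4.21]{G25Cvx1}; your reverse inclusion simply re-runs the second half of the proof of Proposition~\ref{PropRotFRotG}. The one detail you add --- converting a chain $\cl_{i_1}\to\cdots\to\cl_{i_\ell}$ into a directed path of $\Tr$ through adjacent surfaces --- is correctly justified by the remark in Subsection~\ref{SubSecGrafT} that the adjacency data recovers the whole relation $\to$ (and since $\to$ is an order relation, $\Tr$ has no directed cycles, so the concatenated walk is genuinely a path).
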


Let us recall and improve the results of \cite[Subsection~4.5]{G25Cvx1}.

The connected components $(G_\alpha)_{\alpha\in\mathcal A}$ of the graph $G$ correspond to the connected components of $\Tr$ (\cite[Remark~4.19]{G25Cvx1}). For any $\alpha\in\mathcal A$, identified with the set of $i\in I_{\mathrm h}$ such that $G_i\subset G_\alpha$, set (the sets $B_i^o$ are defined in \cite[Proposition~4.17]{G25Cvx1}).
\[B_\alpha = \bigcup_{i\in \alpha} B_i^o.\]
By \cite[Proposition~4.17]{G25Cvx1}, this gives a collection $(B_\alpha)_{\alpha\in\mathcal A}$ of pairwise disjoint, connected, essential and filled open sets, satisfying $f(B_\alpha) = B_\alpha$, and such that for any $\alpha\in\mathcal A$ we have
\begin{equation}\label{EqPi1Si}
\left\langle i_* \pi_1(S_i) \mid {i\in \alpha}\right\rangle \subset i_*\pi_1(B_\alpha).
\end{equation}
The set $B_\alpha$ is also of full measure for any $\mu\in\cl_i$ with $i\in\alpha$.
Recall that $H_1(S,\R) = \bigoplus_{i\in I_{\mathrm h}} i_* H_1(S_i, \R)$. 
Recall also that for $i\neq j$, we have $i_*H_1(B_i,\R) \perp i_* H_1(B_j, \R)$ (for the intersection form $\wedge$).
Combined with \eqref{EqPi1Si} this implies that for any $i\in I_{\mathrm h}$ we have 
\[i_* H_1(S_i,\R) = i_* H_1(B_i^o, \R).\]

Denote $K = S\setminus\bigcup_{\alpha\in\mathcal A}B_\alpha$.

\begin{lemma}\label{RotK0}
If $f$ has big rotation set, then $\rot(f|_{K} ) = \{0\}$.
\end{lemma}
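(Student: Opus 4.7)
The plan is to derive the lemma from the identification of ergodic measures with nonzero rotation as elements of the classes $\cl_i$ for $i\in I_{\mathrm h}$, which by construction of the $B_\alpha$ are concentrated on $S\setminus K$. First I would observe that $K$ is a closed $f$-invariant subset of $S$: each $B_\alpha$ is open and $f$-invariant by \cite[Proposition~4.17]{G25Cvx1}, so the complement $K$ of the disjoint union $\bigcup_\alpha B_\alpha$ is closed and invariant. If $K=\emptyset$ there is nothing to prove; otherwise $K$ is compact and $f|_K$ admits $f$-invariant probability measures. By Lemma~\ref{LemInclusEnsRot} applied to $f|_K$, it is enough to establish that $\rote(f|_K)=\{0\}$.

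The heart of the argument is the following claim: any $\mu\in\Me(f)$ with $\mu(K)=1$ satisfies $\rho(\mu)=0$. Assume for contradiction that $\mu\in\Me(f)$ is supported in $K$ and $\rho(\mu)\neq 0$. Viewed as an element of $\Me(f)$, Proposition~\ref{Prop:decomposition} gives a unique $1\le j\le n$ with $\rho(\mu)\in V_j\setminus\{0\}$. In the big rotation set regime, the discussion following Proposition~\ref{Prop:decomposition} identifies each $V_j$ with $\operatorname{span}(\rho_i)$ for a unique $i\in I_{\mathrm h}$ with $\rho_i\neq\{0\}$, and \cite[Proposition~4.7]{alepablo} together with the last bullet of Proposition~\ref{Prop:decomposition} shows that $\mu$ lies in the corresponding equivalence class $\cl_i$. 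From the properties of $B_i^o$ recalled in Subsection~\ref{SubSecGrafT} (which specialise Definition~\ref{DefToOpen2}), every measure in $\cl_i$ gives full mass to $B_i^o$, so $\mu(B_i^o)=1$.

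Since $B_i^o\subset B_\alpha$ where $\alpha\in\mathcal A$ is the component of $G$ (equivalently of $\Tr$) containing $i$, this forces $\mu(B_\alpha)=1$. But $K\cap B_\alpha=\emptyset$ by the very definition of $K$, so $\mu(K)=0$, contradicting $\mu(K)=1$. Consequently every $f|_K$-ergodic measure has rotation vector $0$, i.e.\ $\rote(f|_K)\subset\{0\}$; combined with the existence of such a measure when $K\neq\emptyset$ and with $\rot(f|_K)\subset\conv(\rote(f|_K))$ from Lemma~\ref{LemInclusEnsRot}, we conclude $\rot(f|_K)=\{0\}$.

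The only real delicacy is the extraction of the index $i\in I_{\mathrm h}$ with $\mu\in\cl_i$ from the mere knowledge that $\rho(\mu)\neq 0$; this relies critically on the big rotation set hypothesis, which via Proposition~\ref{Prop:decomposition} and the subsequent analysis rules out ergodic measures with non-zero rotation vector whose class would live among the $I^1$ (separating closed geodesics, rotation $\{0\}$) or among minimal non-closed laminations (absent here). Once this identification is granted, the rest is a direct measure-theoretic contradiction with $\mu(K)=1$.
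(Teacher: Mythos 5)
Your proof is correct and follows essentially the same route as the paper's: the paper also reduces to ergodic measures supported in $K$ (via Krylov--Bogolyubov and ergodic decomposition, which is what your appeal to Lemma~\ref{LemInclusEnsRot} for $f|_K$ amounts to) and then concludes because typical points of ergodic measures with nonzero rotation vector lie in $\bigcup_\alpha B_\alpha = S\setminus K$. You merely spell out in more detail the identification $\mu\in\cl_i$ and the full-measure property of $B_\alpha$, which the paper leaves implicit.
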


\begin{proof}
 Suppose there exist $(x_k)_k\in K$, $n_k\to +\infty$ and $\rho\neq 0$ such that $\lim_{k \to +\infty}\frac{1}{n_k}\sum_{i=0}^{n_k-1}[a_{f^{i}(x_k)}] = \rho$. By Krylov-Bogolyubov procedure, this implies the existence of $\mu\in\M(f)$ such that $\rho(\mu) = \rho$ and (because $K$ is compact and invariant) $\supp(\mu)\subset K$. Using the ergodic decomposition of $\mu$, we get the existence of $\nu\in \Me(f)$ such that $\supp(\nu)\subset K$ and $\rho(\nu)\neq 0$. But all typical points for measures of $\Me(f)$ with nonzero rotation vector belong to the complement of $K$, a contradiction. 
\end{proof}

\begin{rem}
We can see that the set $K$ is included in a finite union (at most $g$ pieces) of pairwise disjoint essential separating sub-surfaces of genus 0 (aka punctured spheres). This can be seen by considering, for any $i$, a skeleton of the open set $B_i$; the union of all these skeletons is made of at most $g$ open surfaces of genus $0$. 
\end{rem}

\subsection{Proofs of the results}

\begin{proof}[Proof of Theorem~\ref{MainTheoUnionConvex}]
Corollary~\ref{CoroPropRotFRotG} asserts that $\rot(f)$ is a union of convex sets containing 0 (as the $\rho_i$ are convex sets containing 0, by Proposition~\ref{Prop:decomposition}), and the number of such convex sets is bounded by the total number of paths in $\Tr$. More precisely, it is bounded by the total number of \emph{maximal} paths in $\Tr$, as if $\mathrm{p}, \mathrm{p}'$ are two paths in $\Tr$ with $\mathrm p'$ a subpath of $\mathrm p$, then $
\conv(\{\rho_i\}_{i\in \mathrm{p}'}) \subset
\conv(\{\rho_i\}_{i\in \mathrm{p}})$. Hence to prove Theorem~\ref{MainTheoUnionConvex} it suffices to bound the number of paths in $\Tr$. 

Note that if a surface $S_i$ of $\Tr$ (with $i\in I_{\mathrm h}$) has 0 genus and is either starting or ending (meaning that either none of the oriented edges of $\Tr$ having $S_i$ as a tip end at $S_i$, or none of the oriented edges of $\Tr$ having $S_i$ as a tip start at $S_i$), then we do not change the rotation set of the graph by removing these oriented edges starting or ending at $S_i$: if $\Tr'$ is the oriented graph obtained from $\Tr$ by removing these oriented edges, then 
\[\rot(f) = \bigcup_{\mathrm{p} \text{ path in }\Tr}\conv(\{\rho_i\}_{i\in \mathrm{p}})
= \bigcup_{\mathrm{p} \text{ path in }\Tr'}\conv(\{\rho_i\}_{i\in \mathrm{p}}).\]

Denote $\mathcal S$ and $\mathcal E$ the starting and ending vertices of $\Tr'$. 
Note that as $\Tr'$ is an oriented graph with no closed loop, any path $\mathrm p$ of $\Tr'$ is determined by its starting and ending vertices; if moreover $\mathrm p$ is maximal (\emph{i.e.}~cannot be extended), then its starting and ending edges belong to respectively $\mathcal S$ and $\mathcal E$. Hence there are at most 
\[\card(\mathcal S\cap \mathcal E) + \card(\mathcal S\setminus \mathcal E)\cdot \card(\mathcal E\setminus \mathcal S)\]
maximal paths in $\Tr'$. Noting that the elements of $\mathcal S$ and $\mathcal E$ are surfaces with positive genus, we deduce that the number of convex sets forming $\rot(f)$ is bounded by
\[\max\big\{n+pq \mid n,p,q\in\N, n+p+q = g \big\}.\] 
Using the method of Lagrange multipliers, one can see that 
\[\max\big\{x+yz \mid x,y,z\in\R_+, x+y+z = g \big\} = \max(g, g^2/4);\] 
we deduce that 
\[\max\big\{n+pq \mid n,p,q\in\N, n+p+q = g \big\} \le \max(g, \lfloor g^2/4\rfloor).\] 
\end{proof}

\begin{proof}[Proof of Proposition~\ref{CoroTheoConvexTrans}]
As in the proof of Theorem~\ref{MainTheoUnionConvex}, we start by noting that Corollary~\ref{CoroPropRotFRotG} asserts that $\rot(f)$ is a union of convex sets containing 0, and the number of such convex sets is bounded by the total number of paths in $\Tr$.

Using the definition~\ref{DefToOpen2} of the relation $\to$ and the fact that $f$ is transitive, we deduce that the graph $\Tr$ is made of a single strongly connected component. Hence, there is a single class $S_i$ (for $i\in I_{\mathrm h}$) and hence $\rot(f) = \overline{\rote(f)}$ is convex. 
\end{proof}

\begin{proof}[Proof of Proposition~\ref{CoroTheoConvexNW}]
As before, Corollary~\ref{CoroPropRotFRotG} implies that $\rot(f)$ is a union of convex sets containing 0, and the number of such convex sets is bounded by the total number of paths in $\Tr$.

Using \cite[Theorem~B]{G25Cvx1}, the relation $\overset *\to$ and the fact that $f$ is nonwandering (more precisely, the equivalent property that the recurrent points are dense), we deduce that the graph $\Tr$ is recurrent. This is equivalent to ask that there is no relation $\to$ between vertices of $\Tr$. Hence
\[\rot(f) = \bigcup_{i\in I_{\mathrm h}} \rho_i.\]

As there are at most $g$ vertices $S_i$ of $\Tr$ with $\rho_i\neq\{0\}$, we deduce that $\rot(f)$ is the union of at most $g$ convex sets. Note that in this case, the linear subspaces $V_i$ these convex sets span are pairwise orthogonal and in direct sum. 
\end{proof}

\begin{proof}[Proof of Proposition~\ref{PropExistBj}]
The existence of the family $B_1,\dots, B_k$ follows from \cite[Proposition~4.17]{G25Cvx1} (using the sets $B_i^o$). Denoting $K$ the complement of the $B_j$, the fact that $\rot(f|_{K} ) = \{0\}$ is a consequence of Lemma~\ref{RotK0}.
\end{proof}

\begin{proof}[Proof of Theorem~\ref{MainTheoConvex}]
First, if $\inte(\conv(\rot(f))) = \emptyset$, then $\rot(f)$ is included in a hyperplane of $H_1(S,\R)$.

From now we suppose that $\inte(\conv(\rot(f))) \neq \emptyset$. This allows to use the proof of Theorem~\ref{MainTheoUnionConvex}.

Let $\Tr'$ be the graph defined in the proof of Theorem~\ref{MainTheoUnionConvex}.
If $\Tr'$ has at least one strict connected component $\Tr'_1$, then both $\Tr'_1$ and its complement in $\Tr'$ contain genus and hence (using Proposition~\ref{Prop:decomposition}) $\rot(f)\subset \rot(\Tr'_1) \cup (\rot(\Tr'_1) )^\perp$ (the $\perp$ is taken for the intersection form $\wedge$). In particular, $\rot(f)$ is included in the union of two linear subspaces of $H_1(S,\R)$ that are of codimension $\ge 2$.

Hence one can suppose that $\Tr'$ is connected. As in the  proof of Theorem~\ref{MainTheoUnionConvex}, denote $\mathcal S$ and $\mathcal E$ the starting and ending vertices of $\Tr'$. 

If $\card(\mathcal S)\ge 2$, then we denote $\mathcal S = \{S_{i_1},\dots, S_{i_\ell}\}$. Hence 
\[\rot(f)\subset \conv\left(\bigcup_{i\in I_{\mathrm h} \setminus \{i_1\}} \rho_i \right) \cup \conv\left(\bigcup_{i\in I_{\mathrm h} \setminus \{i_2\}} \rho_i \right),\]
with $\conv\big(\bigcup_{i\in I_{\mathrm h} \setminus \{i_1\}} \rho_i \big)$ orthogonal to $\rho_{i_1}$ (for $\wedge$) and $\conv\big(\bigcup_{i\in I_{\mathrm h} \setminus \{i_2\}} \rho_i \big)$ orthogonal to $\rho_{i_2}$. 
So $\rot(f)$ is included in the union of $2$ vector subspaces of $H_1(S,\R)$ of codimension $\ge 2$.

Similarly, if $\card(\mathcal E)\ge 2$, then $\rot(f)$ is included in the union of $2$ vector subspaces of $H_1(S,\R)$ of codimension $\ge 2$.

The only remaining case is when $\card(\mathcal S) = \card(\mathcal E) = 1$. In this case, there is a unique maximal path in $\Tr'$ and hence $\rot(f)$ is convex. 
\end{proof}

\subsection{Sharpness of the bound}\label{paragSharpness}

Let us first describe, for any $g\ge 2$, a homeomorphism of the closed surface $S$ of genus $g$ that has big rotation set that is not the union of less than $\lfloor g^2/4 \rfloor$ convex sets.

Consider $g$ separating simple closed geodesics on $S$, pairwise disjoint, such that the complement of their union is made of $g$ sub-surfaces of genus 1 with one boundary component and one sub-surface $S_0$ of genus 0 with $g$ boundary components (as in Figure~\ref{FigExample11}). We denote $S_1, \dots, S_{\lceil g/2\rceil}, S'_1, \dots, S'_{\lfloor g/2\rfloor}$ these $g$ sub-surfaces of genus 1. For any such surface $S_i$ (resp. $S'_i$), consider two primitive deck transformations $T_{i,1}$ and $T_{i,2}$ (resp. $T_{i,1}$ and $T_{i,2}$) that generate a free group and whose axes on $S$ are contained in the surface $S_i$ (resp. $S'_i$). For any such surface $S_i$ (resp. $S'_i$ and $S_0$), consider a rectangle $R_i$ (resp. $R'_i$ and $R_0$) included in this sub-surface.

\begin{figure}[!t]
\begin{center}

\tikzset{every picture/.style={line width=0.75pt}} 
\vspace{-25pt}
\begin{tikzpicture}[x=0.75pt,y=0.75pt,yscale=-1.3,xscale=1.3]

\draw [color={rgb, 255:red, 144; green, 19; blue, 254 }  ,draw opacity=1 ][line width=1.5]  [dash pattern={on 1.69pt off 2.76pt}]  (381.78,261.01) .. controls (403.62,273.01) and (399.05,211.88) .. (417.87,206.65) ;
\draw  [color={rgb, 255:red, 189; green, 16; blue, 224 }  ,draw opacity=1 ][fill={rgb, 255:red, 144; green, 19; blue, 254 }  ,fill opacity=0.1 ] (406.74,150.02) .. controls (393.02,137.02) and (320.62,185.95) .. (331.85,199.44) .. controls (360.01,236.6) and (381.75,299.77) .. (440,240) .. controls (485.44,194.32) and (440.87,170.89) .. (406.74,150.02) -- cycle ;
\draw [color={rgb, 255:red, 245; green, 166; blue, 35 }  ,draw opacity=1 ][line width=1.5]  [dash pattern={on 1.69pt off 2.76pt}]  (329.16,84.71) .. controls (336.93,73.16) and (417.2,122.94) .. (404.08,132.33) ;
\draw  [color={rgb, 255:red, 189; green, 16; blue, 224 }  ,draw opacity=1 ][fill={rgb, 255:red, 245; green, 166; blue, 35 }  ,fill opacity=0.1 ] (330.7,82.01) .. controls (340.02,70.63) and (420.84,120.81) .. (407.34,129.62) .. controls (401.94,135.02) and (400.7,134.01) .. (400,140) .. controls (400.7,146.3) and (404.74,148.02) .. (406.74,150.02) .. controls (393.02,137.15) and (320.22,186.35) .. (331.85,199.44) .. controls (329.96,195.61) and (322.03,188.91) .. (320.03,198.06) .. controls (311.75,180.91) and (291.18,162.91) .. (280,160) .. controls (290.14,159.82) and (289.81,119.92) .. (280,120) .. controls (311.64,123.7) and (324.13,88.87) .. (330.7,82.01) -- cycle ;
\draw [color={rgb, 255:red, 139; green, 87; blue, 42 }  ,draw opacity=1 ][line width=1.5]  [dash pattern={on 1.69pt off 2.76pt}]  (252.88,193.33) .. controls (247.1,201.55) and (273.65,223.17) .. (287.39,229.96) ;
\draw  [color={rgb, 255:red, 189; green, 16; blue, 224 }  ,draw opacity=1 ][fill={rgb, 255:red, 139; green, 87; blue, 42 }  ,fill opacity=0.1 ] (263.97,265.47) .. controls (333.22,287.82) and (316.94,209.53) .. (320.03,198.06) .. controls (314.79,187.43) and (298.63,166.26) .. (280,160) .. controls (294.63,162.43) and (178.37,237.82) .. (263.97,265.47) -- cycle ;
\draw [color={rgb, 255:red, 245; green, 166; blue, 35 }  ,draw opacity=1 ][line width=1.5]  [dash pattern={on 1.69pt off 2.76pt}]  (324.6,193.2) .. controls (338.27,200.73) and (414.43,155.56) .. (402.58,145.71) ;
\draw [color={rgb, 255:red, 245; green, 166; blue, 35 }  ,draw opacity=1 ][line width=1.5]    (324.6,193.2) .. controls (314.49,190.95) and (315.09,146.92) .. (341.9,145.69) .. controls (368.71,144.46) and (399.66,137.4) .. (404.08,132.33) ;
\draw [color={rgb, 255:red, 208; green, 2; blue, 27 }  ,draw opacity=1 ][line width=1.5]  [dash pattern={on 1.69pt off 2.76pt}]  (285.13,50.02) .. controls (275,54.02) and (239.75,82.77) .. (248.88,88.77) ;
\draw  [color={rgb, 255:red, 189; green, 16; blue, 224 }  ,draw opacity=1 ][fill={rgb, 255:red, 208; green, 2; blue, 27 }  ,fill opacity=0.1 ] (250.68,11.05) .. controls (341.88,-23.73) and (303.13,121.52) .. (330.7,82.01) .. controls (323.88,89.77) and (311.13,123.52) .. (280,120) .. controls (300.13,121.27) and (158.25,45.77) .. (250.68,11.05) -- cycle ;
\draw [color={rgb, 255:red, 65; green, 117; blue, 5 }  ,draw opacity=1 ][line width=1.5]  [dash pattern={on 1.69pt off 2.76pt}]  (375.87,23.9) .. controls (393.8,11.78) and (405.77,73.55) .. (415.99,79.99) ;
\draw  [color={rgb, 255:red, 189; green, 16; blue, 224 }  ,draw opacity=1 ][fill={rgb, 255:red, 65; green, 117; blue, 5 }  ,fill opacity=0.1 ] (330.7,82.01) .. controls (340.29,70.63) and (420.66,120.63) .. (407.33,129.62) .. controls (450.43,102.1) and (479.99,81.88) .. (440,40) .. controls (380.8,-17.6) and (357.86,44.4) .. (330.7,82.01) -- cycle ;
\draw [color={rgb, 255:red, 74; green, 144; blue, 226 }  ,draw opacity=1 ][line width=1.5]  [dash pattern={on 1.69pt off 2.76pt}]  (212.55,142.91) .. controls (226.4,146.29) and (243.32,188.42) .. (252.23,178.23) ;
\draw  [color={rgb, 255:red, 189; green, 16; blue, 224 }  ,draw opacity=1 ][fill={rgb, 255:red, 74; green, 144; blue, 226 }  ,fill opacity=0.1 ] (230,90) .. controls (250.06,90.17) and (260.4,119.7) .. (280,120) .. controls (290.13,119.62) and (290.13,159.82) .. (280,160) .. controls (260.67,160.81) and (250.13,189.92) .. (230,190) .. controls (209,189.81) and (170.4,189.41) .. (170,140) .. controls (170.68,90.84) and (210.11,89.99) .. (230,90) -- cycle ;
\draw  [line width=1.5]  (230,90) .. controls (249.83,90.29) and (260.33,119.86) .. (280,120) .. controls (299.67,120.14) and (158.28,45.05) .. (250.68,11.05) .. controls (343.08,-22.95) and (302.37,120.13) .. (330.7,82.01) .. controls (359.03,43.9) and (379.76,-17.12) .. (440,40) .. controls (500.24,97.12) and (400.28,120.94) .. (400,140) .. controls (399.72,159.06) and (500.82,179.39) .. (440,240) .. controls (379.18,300.61) and (362.54,235.97) .. (331.85,199.44) .. controls (301.15,162.92) and (351.3,294.13) .. (263.97,265.47) .. controls (176.63,236.8) and (298.06,159.94) .. (280,160) .. controls (261.94,160.06) and (250.38,190.17) .. (230,190) .. controls (209.63,189.83) and (170.67,188.71) .. (170,140) .. controls (169.33,91.29) and (210.17,89.71) .. (230,90) -- cycle ;
\draw  [fill={rgb, 255:red, 255; green, 255; blue, 255 }  ,fill opacity=1 ][line width=1.5]  (252.14,143.1) .. controls (240.66,147.22) and (228.87,149.64) .. (212.55,142.91) .. controls (222.53,126.38) and (242.42,126.28) .. (252.14,143.1) -- cycle ;
\draw [line width=1.5]    (202.25,136.33) .. controls (217.33,151.06) and (248.33,150.72) .. (262.25,136.33) ;

\draw  [fill={rgb, 255:red, 255; green, 255; blue, 255 }  ,fill opacity=1 ][line width=1.5]  (419.89,85.85) .. controls (408.41,89.97) and (396.62,92.39) .. (380.3,85.66) .. controls (390.28,69.13) and (410.17,69.03) .. (419.89,85.85) -- cycle ;
\draw [line width=1.5]    (370,79.08) .. controls (385.08,93.81) and (416.08,93.47) .. (430,79.08) ;

\draw  [fill={rgb, 255:red, 255; green, 255; blue, 255 }  ,fill opacity=1 ][line width=1.5]  (419.89,205.85) .. controls (408.41,209.97) and (396.62,212.39) .. (380.3,205.66) .. controls (390.28,189.13) and (410.17,189.03) .. (419.89,205.85) -- cycle ;
\draw [line width=1.5]    (370,199.08) .. controls (385.08,213.81) and (416.08,213.47) .. (430,199.08) ;

\draw  [color={rgb, 255:red, 74; green, 144; blue, 226 }  ,draw opacity=1 ][fill={rgb, 255:red, 74; green, 144; blue, 226 }  ,fill opacity=0.15 ] (214.82,152.95) .. controls (221.43,154.92) and (228.66,154.45) .. (234.82,152.95) .. controls (236.58,158.92) and (238.47,170.23) .. (237.08,178.42) .. controls (230.36,180.83) and (222.71,180.1) .. (214.25,178.25) .. controls (213.17,170.83) and (213.43,159.37) .. (214.82,152.95) -- cycle ;
\draw [color={rgb, 255:red, 74; green, 144; blue, 226 }  ,draw opacity=1 ][line width=1.5]    (209.51,156.42) .. controls (223.17,163.42) and (281.84,159.62) .. (266.96,128.23) .. controls (252.08,96.84) and (182.04,106.57) .. (184.6,137.87) .. controls (187.15,169.17) and (206.37,175.85) .. (227.56,174.67) .. controls (248.76,173.48) and (256.73,173.23) .. (252.23,178.23) ;
\draw  [color={rgb, 255:red, 65; green, 117; blue, 5 }  ,draw opacity=1 ][fill={rgb, 255:red, 65; green, 117; blue, 5 }  ,fill opacity=0.15 ] (393.33,42.59) .. controls (399.67,40.03) and (408.44,40.34) .. (414.44,41.42) .. controls (415.98,47.11) and (415.29,61.15) .. (414.02,69.67) .. controls (407.77,67.17) and (403.52,66.42) .. (393.77,68.67) .. controls (392.69,61.25) and (391.94,49) .. (393.33,42.59) -- cycle ;
\draw [color={rgb, 255:red, 65; green, 117; blue, 5 }  ,draw opacity=1 ][line width=1.5]    (383.77,50.44) .. controls (398.09,56.09) and (455.1,49.77) .. (451.99,79.33) .. controls (448.88,108.88) and (355.04,95.91) .. (360.02,79.42) .. controls (365,62.92) and (388.02,65.28) .. (411.77,65.17) .. controls (435.53,65.05) and (420.43,85.55) .. (415.99,79.99) ;
\draw [color={rgb, 255:red, 65; green, 117; blue, 5 }  ,draw opacity=1 ][line width=1.5]    (420.18,50.06) .. controls (389.34,47.94) and (354.99,36.65) .. (375.87,23.9) ;
\draw  [color={rgb, 255:red, 245; green, 166; blue, 35 }  ,draw opacity=1 ][fill={rgb, 255:red, 245; green, 166; blue, 35 }  ,fill opacity=0.15 ] (341.17,114.23) .. controls (350.6,111.66) and (363.61,112.83) .. (369.9,115.69) .. controls (371.9,124.26) and (371.49,146.87) .. (369.49,155.44) .. controls (362.76,157.86) and (350.8,157.29) .. (342.34,155.44) .. controls (339.49,148.3) and (339.78,120.65) .. (341.17,114.23) -- cycle ;
\draw [color={rgb, 255:red, 245; green, 166; blue, 35 }  ,draw opacity=1 ][line width=1.5]    (329.16,84.71) .. controls (322.71,91.38) and (323.14,120.4) .. (336.99,119.32) .. controls (350.84,118.25) and (367.17,120.42) .. (378.45,121.14) .. controls (389.72,121.87) and (395.52,117.42) .. (379.27,111.67) .. controls (363.02,105.92) and (348.42,90.62) .. (348.27,74.42) .. controls (348.12,58.21) and (388.52,59.67) .. (420.02,60.67) ;
\draw [color={rgb, 255:red, 74; green, 144; blue, 226 }  ,draw opacity=1 ][line width=1.5]    (212.55,142.91) .. controls (203.81,138.92) and (193.81,170.24) .. (230.23,170.6) .. controls (266.65,170.96) and (262.12,129.61) .. (378.45,131.69) ;
\draw  [color={rgb, 255:red, 144; green, 19; blue, 254 }  ,draw opacity=1 ][fill={rgb, 255:red, 144; green, 19; blue, 254 }  ,fill opacity=0.15 ] (393.48,219.51) .. controls (400.29,220.49) and (408.91,220.65) .. (414.6,218.34) .. controls (416.14,224.03) and (417.6,235.36) .. (416.33,243.88) .. controls (410.64,246.65) and (402.17,246.49) .. (393.56,243.72) .. controls (392.48,236.3) and (392.09,225.93) .. (393.48,219.51) -- cycle ;
\draw [color={rgb, 255:red, 144; green, 19; blue, 254 }  ,draw opacity=1 ][line width=1.5]    (386.79,240.18) .. controls (422.94,248.49) and (465.41,230.34) .. (438.02,195.11) .. controls (410.64,159.88) and (362.94,185.88) .. (362.64,202.03) .. controls (362.33,218.18) and (377.25,223.88) .. (408.02,223.72) .. controls (438.79,223.57) and (426.33,203.26) .. (417.87,206.65) ;
\draw [color={rgb, 255:red, 144; green, 19; blue, 254 }  ,draw opacity=1 ][line width=1.5]    (381.78,261.01) .. controls (373.07,256.54) and (377.56,236.08) .. (385.42,235.92) .. controls (393.27,235.76) and (405.32,239.79) .. (425.4,234.72) ;
\draw [color={rgb, 255:red, 245; green, 166; blue, 35 }  ,draw opacity=1 ][line width=1.5]    (427.63,229.78) .. controls (359.63,233.19) and (353.07,211.32) .. (344.58,199.14) .. controls (336.1,186.96) and (314.05,151.09) .. (348.44,150.95) .. controls (382.84,150.82) and (396.89,136.63) .. (402.58,145.71) ;
\draw  [fill={rgb, 255:red, 255; green, 255; blue, 255 }  ,fill opacity=1 ][line width=1.5]  (287.64,49.1) .. controls (276.16,53.22) and (264.37,55.64) .. (248.05,48.91) .. controls (258.03,32.38) and (277.92,32.28) .. (287.64,49.1) -- cycle ;
\draw [line width=1.5]    (237.75,42.33) .. controls (252.83,57.06) and (283.83,56.72) .. (297.75,42.33) ;

\draw  [color={rgb, 255:red, 208; green, 2; blue, 27 }  ,draw opacity=1 ][fill={rgb, 255:red, 208; green, 2; blue, 27 }  ,fill opacity=0.15 ] (274.82,64.53) .. controls (281.03,64.76) and (284.53,64.25) .. (290.53,60.82) .. controls (294.82,66.82) and (297.92,80.63) .. (296.53,88.82) .. controls (289.81,91.24) and (284.25,93.39) .. (274.82,91.96) .. controls (273.74,84.54) and (273.43,70.95) .. (274.82,64.53) -- cycle ;
\draw [color={rgb, 255:red, 208; green, 2; blue, 27 }  ,draw opacity=1 ][line width=1.5]    (263.5,77.02) .. controls (326.13,92.77) and (326.13,15.27) .. (265.88,21.27) .. controls (205.63,27.27) and (216.19,58.88) .. (269.13,69.02) .. controls (322.06,79.16) and (296.88,44.02) .. (285.13,50.02) ;
\draw [color={rgb, 255:red, 208; green, 2; blue, 27 }  ,draw opacity=1 ][line width=1.5]    (378.13,126.02) .. controls (278,133.52) and (322.75,83.02) .. (287.25,83.27) .. controls (251.75,83.52) and (254.25,94.27) .. (248.88,88.77) ;
\draw  [fill={rgb, 255:red, 255; green, 255; blue, 255 }  ,fill opacity=1 ][line width=1.5]  (295.78,239.25) .. controls (284.3,243.37) and (272.51,245.79) .. (256.19,239.05) .. controls (266.17,222.52) and (286.06,222.42) .. (295.78,239.25) -- cycle ;
\draw [line width=1.5]    (245.89,232.48) .. controls (260.98,247.2) and (291.98,246.86) .. (305.89,232.48) ;

\draw  [color={rgb, 255:red, 139; green, 87; blue, 42 }  ,draw opacity=1 ][fill={rgb, 255:red, 139; green, 87; blue, 42 }  ,fill opacity=0.15 ] (277.39,191.39) .. controls (286.82,193.1) and (293.96,196.82) .. (298.53,199.96) .. controls (296.53,209.1) and (294.2,217.2) .. (292.82,225.39) .. controls (286.53,221.39) and (281.1,217.39) .. (271.96,218.82) .. controls (270.88,211.4) and (273.1,197.1) .. (277.39,191.39) -- cycle ;
\draw [color={rgb, 255:red, 139; green, 87; blue, 42 }  ,draw opacity=1 ][line width=1.5]    (268.71,203.8) .. controls (345.85,220.09) and (310.7,252.68) .. (267.56,253.25) .. controls (224.41,253.82) and (235.82,206.03) .. (274.93,212.25) .. controls (314.04,218.47) and (293.96,235.63) .. (287.39,229.96) ;
\draw [color={rgb, 255:red, 139; green, 87; blue, 42 }  ,draw opacity=1 ][line width=1.5]    (252.88,193.33) .. controls (256.74,189.08) and (283.03,201.79) .. (301.49,206.25) .. controls (319.96,210.71) and (293.42,165.22) .. (301.96,153.79) .. controls (310.49,142.36) and (322.11,137.17) .. (377.65,137.33) ;

\draw (190.14,89.63) node [anchor=south west] [inner sep=0.75pt]  [color={rgb, 255:red, 30; green, 99; blue, 179 }  ,opacity=1 ,xscale=1.2,yscale=1.2]  {$S_{1}$};
\draw (306.03,125.87) node  [color={rgb, 255:red, 215; green, 134; blue, 1 }  ,opacity=1 ,xscale=1.2,yscale=1.2]  {$S_{0}$};
\draw (428.58,113.4) node [anchor=north west][inner sep=0.75pt]  [color={rgb, 255:red, 65; green, 117; blue, 5 }  ,opacity=1 ,xscale=1.2,yscale=1.2]  {$S'_{1}$};
\draw (426.08,159.98) node [anchor=south west] [inner sep=0.75pt]  [color={rgb, 255:red, 118; green, 3; blue, 220 }  ,opacity=1 ,xscale=1.2,yscale=1.2]  {$S'_{2}$};
\draw (318.87,237.87) node [anchor=west] [inner sep=0.75pt]  [color={rgb, 255:red, 133; green, 71; blue, 18 }  ,opacity=1 ,xscale=1.2,yscale=1.2]  {$S_{3}$};
\draw (317.44,37.3) node [anchor=west] [inner sep=0.75pt]  [color={rgb, 255:red, 177; green, 1; blue, 21 }  ,opacity=1 ,xscale=1.2,yscale=1.2]  {$S_{2}$};

\end{tikzpicture}

\begin{tikzpicture}[x=0.75pt,y=0.75pt,yscale=-1,xscale=1]

\draw    (241.17,105.83) -- (299.88,142.91) ;
\draw [shift={(302.42,144.51)}, rotate = 212.27] [fill={rgb, 255:red, 0; green, 0; blue, 0 }  ][line width=0.08]  [draw opacity=0] (7.14,-3.43) -- (0,0) -- (7.14,3.43) -- (4.74,0) -- cycle    ;
\draw    (230,153) -- (297.41,153.71) ;
\draw [shift={(300.41,153.74)}, rotate = 180.61] [fill={rgb, 255:red, 0; green, 0; blue, 0 }  ][line width=0.08]  [draw opacity=0] (7.14,-3.43) -- (0,0) -- (7.14,3.43) -- (4.74,0) -- cycle    ;
\draw    (317.71,154.71) -- (364.68,193.9) ;
\draw [shift={(366.98,195.83)}, rotate = 219.84] [fill={rgb, 255:red, 0; green, 0; blue, 0 }  ][line width=0.08]  [draw opacity=0] (7.14,-3.43) -- (0,0) -- (7.14,3.43) -- (4.74,0) -- cycle    ;
\draw    (364.82,118.4) -- (317.71,154.71) ;
\draw [shift={(367.2,116.57)}, rotate = 142.37] [fill={rgb, 255:red, 0; green, 0; blue, 0 }  ][line width=0.08]  [draw opacity=0] (8.04,-3.86) -- (0,0) -- (8.04,3.86) -- (5.34,0) -- cycle    ;
\draw    (243.27,206.8) -- (302.44,166.53) ;
\draw [shift={(304.92,164.85)}, rotate = 145.77] [fill={rgb, 255:red, 0; green, 0; blue, 0 }  ][line width=0.08]  [draw opacity=0] (7.14,-3.43) -- (0,0) -- (7.14,3.43) -- (4.74,0) -- cycle    ;

\draw  [color={rgb, 255:red, 208; green, 2; blue, 27 }  ,draw opacity=1 ][fill={rgb, 255:red, 255; green, 221; blue, 227 }  ,fill opacity=1 ]  (242.3, 105.6) circle [x radius= 16.62, y radius= 16.62]   ;
\draw (242.3,105.6) node  [color={rgb, 255:red, 152; green, 0; blue, 17 }  ,opacity=1 ,xscale=1.2,yscale=1.2]  {$S_{2}$};
\draw  [color={rgb, 255:red, 74; green, 144; blue, 226 }  ,draw opacity=1 ][fill={rgb, 255:red, 224; green, 241; blue, 255 }  ,fill opacity=1 ]  (225.21, 152.86) circle [x radius= 16.62, y radius= 16.62]   ;
\draw (225.21,152.86) node  [color={rgb, 255:red, 26; green, 87; blue, 156 }  ,opacity=1 ,xscale=1.2,yscale=1.2]  {$S_{1}$};
\draw  [color={rgb, 255:red, 245; green, 166; blue, 35 }  ,draw opacity=1 ][fill={rgb, 255:red, 255; green, 247; blue, 229 }  ,fill opacity=1 ]  (317.21, 152.86) circle [x radius= 16.62, y radius= 16.62]   ;
\draw (317.21,152.86) node  [color={rgb, 255:red, 210; green, 135; blue, 13 }  ,opacity=1 ,xscale=1.2,yscale=1.2]  {$S_{0}$};
\draw  [color={rgb, 255:red, 144; green, 19; blue, 254 }  ,draw opacity=1 ][fill={rgb, 255:red, 245; green, 235; blue, 255 }  ,fill opacity=1 ]  (380.5, 205.86) circle [x radius= 16.62, y radius= 16.62]   ;
\draw (380.5,205.86) node  [color={rgb, 255:red, 107; green, 7; blue, 193 }  ,opacity=1 ,xscale=1.2,yscale=1.2]  {$S'_{2}$};
\draw  [color={rgb, 255:red, 65; green, 117; blue, 5 }  ,draw opacity=1 ][fill={rgb, 255:red, 240; green, 250; blue, 226 }  ,fill opacity=1 ]  (380.21, 105.86) circle [x radius= 16.62, y radius= 16.62]   ;
\draw (380.21,105.86) node  [color={rgb, 255:red, 54; green, 100; blue, 1 }  ,opacity=1 ,xscale=1.2,yscale=1.2]  {$S'_{1}$};
\draw  [color={rgb, 255:red, 139; green, 87; blue, 42 }  ,draw opacity=1 ][fill={rgb, 255:red, 235; green, 221; blue, 215 }  ,fill opacity=1 ]  (243.04, 207.45) circle [x radius= 16.62, y radius= 16.62]   ;
\draw (243.04,207.45) node  [color={rgb, 255:red, 126; green, 79; blue, 39 }  ,opacity=1 ,xscale=1.2,yscale=1.2]  {$S_{3}$};

\end{tikzpicture}

\caption{Example of a homeomorphism with big rotation set, the associates surfaces $S_i$ and the graph $\Tr$. Note that the piece $\rho_0$ of $\rote(f)$ associated to $S_0$ is equal to $\{0\}$ (as $S_0$ has no genus, we have $i_*(H_1(S_0, \R)) = \{0\}$).}\label{FigExample11}
\end{center}
\end{figure}
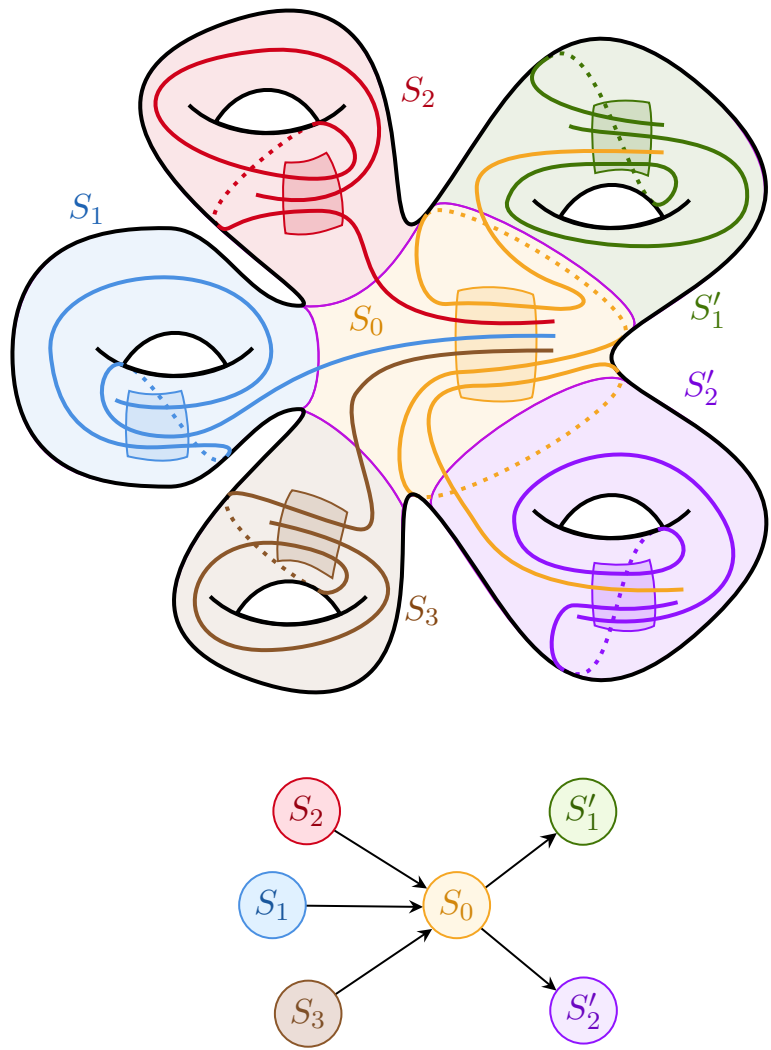

One can then build $f\in \Homeo_0(S)$ such that:
\begin{itemize}
\item For any $1\le i\le \lceil g/2\rceil$, the rectangle $R_i$ is a rotational horseshoe with deck transformations $\Id, T_{i,1},T_{i,2}$ (see \cite[Definition~3.7]{G25Cvx1}, and the intersection $f(R_i)\cap R_0$ is Markovian.
\item For any $1\le j\le \lfloor g/2\rfloor$, the rectangle $R'_j$ is a rotational horseshoe with deck transformations $\Id, T'_{j,1},T'_{j,2}$, and the intersection $f(R_0)\cap R'_j$ is Markovian.
\item For any $1\le i\le \lceil g/2\rceil$, $S_i\subset f(S_i)\subset S_i\cup S_0$.
\item For any $1\le j\le \lfloor g/2\rfloor$, $f(S'_j)\subset S'_j$.
\end{itemize}

These conditions imply that in the oriented graph $\Tr$, the starting vertices are the $(S_i)_{1\le i\le \lceil g/2\rceil}$ (that are surfaces of genus 1), and the ending vertices are the $(S'_j)_{1\le j\le\lfloor g/2\rfloor}$ (that are surfaces with genus 1). Moreover, for any couple $(S_i, S'_j)$ of starting/ending vertices, there is an oriented path in $\Tr$ from $S_i$ to $S'_j$. Call $\rho_i$ the rotation set of the class of $\Merg(f)$ whose associated surface is $S_i$, and $\rho'_j$ the rotation set of the class of $\Merg(f)$ whose associated surface is $S'_j$. Write $V_i = \spn(\rho_i)$ and $V'_j = \spn(\rho'_j)$. 
By Corollary~\ref{CoroPropRotFRotG}, we have 
\[\rot(f) = \bigcup_{\substack{1\le i\le \lceil g/2\rceil \\1\le j\le\lfloor g/2\rfloor}}\conv\big(\rho_i \cup \rho'_j\big).\]
Each of these pieces $\conv(\rho_i, \rho'_j)$ is a nonempty interior convex set that spans $V_i \oplus V'_j$. Hence, $\rot(f)$ is the union of $\lceil g/2\rceil \lfloor g/2\rfloor = \lfloor g^2/4\rfloor$ convex pieces, and cannot be written as the union of $\lfloor g^2/4\rfloor-1$ convex pieces. 
\bigskip

Let us now describe, for any $g\ge 2$, a homeomorphism of the closed surface $S$ of genus $g$ that has big rotation set that is not the union of less than $g$ convex sets. To get this it suffices to modify the preceding construction by deleting all the connections between classes. More precisely, replace the 4 above hypotheses by:
\begin{itemize}
\item For any $1\le i\le \lceil g/2\rceil$, the rectangle $R_i$ is a rotational horseshoe with deck transformations $\Id, T_{i,1},T_{i,2}$ (see \cite[Definition~3.7]{G25Cvx1}).
\item For any $1\le j\le \lfloor g/2\rfloor$, the rectangle $R'_j$ is a rotational horseshoe with deck transformations $\Id, T'_{j,1},T'_{j,2}$.
\item For any $1\le i\le \lceil g/2\rceil$, $f(S_i)= S_i$.
\item For any $1\le j\le \lfloor g/2\rfloor$, $f(S'_j)= S'_j$.
\end{itemize}

These conditions ensure that in the oriented graph $\Tr$, all the vertices with genus are both starting and ending (there is no connection between them).
By Corollary~\ref{CoroPropRotFRotG}, this implies
\[\rot(f) = \bigcup_{1\le i\le \lceil g/2\rceil }\conv(\rho_i) \cup \bigcup_{1\le j\le\lfloor g/2\rfloor}\conv(\rho'_j).\]
Each of these pieces is a nonempty interior convex set that spans $V_i$ or $V'_j$. Hence, $\rot(f)$ is the union of $g$ convex pieces, and cannot be written as the union of $g-1$ convex pieces.

\section{Bounded deviations II: proof of Theorem~\ref{TheoBndedHomo} and its consequences}\label{SecBnddDev2}

In this section we finish our study of bounded deviations initiated in Section~\ref{SecBnddDev1} and prove Theorem~\ref{TheoBndedHomo} and its consequences. 

This will be made by combining the two bounded deviation results we already proved (Propositions~\ref{PropDevImpliesConnec} and \ref{PropBnddDevConvHull}) with the structure theorem of rotation sets (Theorem~\ref{MainTheoUnionConvex}).

The idea of the proof is that there are two ways of having big deviation with respect to $\rot(f)$: either being far away from $\conv(n\rot(f))$, or being in $\conv(n\rot(f))$ but far away from $n\rot(f)$ (Lemma~\ref{LemInterVois}). In the first case we prove that this creates periodic orbits having rotation vector outside of $\conv(\rot(f))$. In the second case, we prove that this implies new connections between classes. 

\begin{proof}[Proof of Theorem~\ref{TheoBndedHomo}]
To simplify the exposition it will be simpler to replace $a_x^n$ (defined page~\pageref{Defay}) with a curve that is at finite homological distance to it: we fix a basepoint $o\in D$ and denote $\alpha_x^n$ the geodesic segment linking $o$ to $a_{f^{n-1}(y)}\dots a_y o$. 
Note that $\wt f^n(\wt x) \in a_{f^{n-1}(y)}\dots a_y D$.

Recall that by Theorem~\ref{MainTheoUnionConvex} the rotation set can be written
\[\rot(f) = \bigcup_{j\in J} \rho'_j,\]
where for any $j\in J$ we can write
\[\rho'_j = \conv \big\{\rho_i \mid i\in I^j\big\}
\qquad\text{and}\qquad
V'_j =  \bigoplus_{i\in I^j} V_i .\]

Recall that for $E$ a set and $R>0$, denote $B_R(E) = \{x\mid d(x, E)<R\}$.

\begin{lemma}\label{LemInterVois}
For any $R>0$, we have
\[B_R\Big(\bigcup_{j\in J} V'_j\Big)\cap B_R\big(\conv(\rot(f)\big) \subset  B_{3R}\big(\rot(f)).\]
\end{lemma}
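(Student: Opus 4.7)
The plan is to construct, for any $v$ in the intersection, an explicit element $p\in\rho'_{j_0}\subset\rot(f)$ (for a suitable $j_0\in J$) at distance less than $2R<3R$ from $v$. First, pick $j_0\in J$ and $w\in V'_{j_0}$ with $\|v-w\|<R$, together with $u\in\conv(\rot(f))$ with $\|v-u\|<R$. Using the decomposition $H_1(S,\R)=\bigoplus_{i\in I_{\mathrm h}}V_i$ coming from Proposition~\ref{Prop:decomposition}, define the linear projection $\pi:H_1(S,\R)\to V'_{j_0}$ by $\pi\bigl(\sum_i x_i\bigr)=\sum_{i\in I^{j_0}}x_i$, and take $p=\pi(u)$.

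The core step is to verify $p=\pi(u)\in\rho'_{j_0}$. Write $u=\sum_k\lambda_k u_k$ as a convex combination with $u_k\in\rho'_{j_k}$, and further unfold $u_k=\sum_l\mu_{k,l}r_{k,l}$ with $r_{k,l}\in\rho_{i_{k,l}}$ for some $i_{k,l}\in I^{j_k}$, $\mu_{k,l}\ge 0$, $\sum_l\mu_{k,l}=1$. Then
\[\pi(u_k)=\sum_{l:\,i_{k,l}\in I^{j_0}}\mu_{k,l}\,r_{k,l}=\alpha_k q_k,\]
where $\alpha_k=\sum_{l:\,i_{k,l}\in I^{j_0}}\mu_{k,l}\in[0,1]$ and, when $\alpha_k>0$, $q_k\in\rho'_{j_0}$ as a genuine convex combination of elements of $\bigcup_{i\in I^{j_0}}\rho_i$. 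Because $\rho'_{j_0}$ is convex and contains $0$, the scalar multiple $\alpha_k q_k=\alpha_k q_k+(1-\alpha_k)\cdot 0$ still belongs to $\rho'_{j_0}$. Hence $\pi(u)=\sum_k\lambda_k\pi(u_k)$ is a convex combination of elements of $\rho'_{j_0}$, so it lies in $\rho'_{j_0}$.

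For the distance estimate I would use the sup-norm formula $\|x\|=\sup_i\|x_i\|_i$. Since $w_i=0$ for $i\notin I^{j_0}$, one has $\|v-\pi(v)\|=\sup_{i\notin I^{j_0}}\|v_i\|_i\le\|v-w\|<R$, and because $\pi$ is $1$-Lipschitz for this norm, $\|\pi(v)-\pi(u)\|\le\|v-u\|<R$. The triangle inequality then gives $\|v-\pi(u)\|<2R<3R$, so $\pi(u)$ is the desired point of $\rot(f)$.

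The only delicate issue is the claim $\pi(u)\in\rho'_{j_0}$: naively $\pi(u)=\sum_{i\in I^{j_0}}u_i$ with $u_i\in\rho_i$, but the plain sum $\sum_i q_i$ of elements $q_i\in\rho_i$ can easily fall outside $\rho'_{j_0}=\conv\bigl(\bigcup_i\rho_i\bigr)$ (e.g.\ $(1,1)$ is not in the convex hull of $[-1,1]\times\{0\}$ and $\{0\}\times[-1,1]$). The mechanism that prevents this is the global convex-combination structure of $u$: the constraint $\sum_l\mu_{k,l}=1$ forces $\alpha_k\le 1$, and it is this normalisation, combined with $0\in\rho'_{j_0}$, that keeps $\pi(u_k)$ inside $\rho'_{j_0}$.
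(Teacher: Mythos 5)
Your proof is correct and follows essentially the same route as the paper's: project $u\in\conv(\rot(f))$ onto $V'_{j_0}$ parallel to $\bigoplus_{i\notin I^{j_0}}V_i$, use the convex-combination structure together with $0\in\rho'_{j_0}$ to see the projection lands in $\rho'_{j_0}\subset\rot(f)$, then conclude by the triangle inequality. Your distance estimate via $\pi(v)$ and the $1$-Lipschitz property of $\pi$ even yields the slightly sharper constant $2R$, whereas the paper passes through $d(z,V'_j)\le 2R$ and gets $3R$.
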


\begin{proof}
Let $x\in B_R\big(\bigcup_{j\in J} V'_j\big)\cap B_R\big(\conv(\rot(f)\big)$.
This ensures the existence of $j\in J$ such that $d(x,V'_j)\le R$. 
There also exists $z\in \conv(\rot(f))$ such that $\|x-z\|\le R$. One can write $z = \sum_{i\in I_{\mathrm{h}}} \lambda_i v_i$, with $v_i\in\rho_i$ and $\sum_i\lambda_i=1$, $\lambda_i\ge 0$. 
Let us denote $\pr_{V'_j}$ the projection on $V'_j$ parallel to $\bigoplus_{i\notin I^j} V_i$. One has
\[\pr_{V'_j}(z) = \sum_{i\in I^j} \lambda_i v_i = \sum_{i\in I^j} \lambda_i v_i + \Big(\sum_{i\notin I^j} \lambda_i\Big). 0.\]
This last equality ensures that (recall that $\overline{\rho_j'}$ is star-shaped with respect to 0) $\pr_{V'_j}(z)\in \overline{\rho_j'} \subset \rot(f)$.

Note that $\pr_{V'_j}(z)$ is also the projection of $z$ on the closest point of $V_j$ for the norm $\|\cdot\|$. Hence, 
\[\|z-\pr_{V'_j}(z)\| = d(z, V'_j) \le \|x-z\| + d(x, V'_j) \le 2R,\]
so
\[\|x-\pr_{V'_j}(z)\| \le \|x-z\| + \|z-\pr_{V'_j}(z)\| \le 3R.\]
This proves that $d(x,\rot(f))\le 3R$.
\end{proof}

We argue by contradiction and suppose that Theorem~\ref{TheoBndedHomo} is false: there exist $x\in S$ and $n\in\N$ such that 
\[d\big([\alpha_x^n],\, n\rot(f)  \big) > 3L.\]
Lemma~\ref{LemInterVois} implies that either $d\big([\alpha_x^n],\, \bigcup_{j\in J} V'_j  \big) > L$, or $d\big([\alpha_x^n],\, n\conv(\rot(f))  \big) > L$.
But these are impossible, the first one by Proposition~\ref{PropDevImpliesConnec}, and the second one by Proposition~\ref{PropBnddDevConvHull}.
\end{proof}

We now come to the consequences of bounded deviations. 

Corollary~\ref{CoroBoyland1} is a classical consequence of Theorem~\ref{TheoBndedHomo}: as noted in the first paragraph of \cite[Section~4.3.1]{lellouch}, the only hypothesis the author uses is the boundedness of deviations, \emph{i.e.}\ the conclusion of Theorem~\ref{TheoBndedHomo}). 

Let us prove Corollary~\ref{CoroBoyland2}, starting with some notations (following \cite{lellouch}). Consider $\omega$ a closed 1-form on $S$ and $[\omega]\in H^1(S,\R)$ its cohomology class. For $\mu\in\M(f)$, denote
\[\rho(\mu)\cdot[\omega] = \int_S \left(\int_{I(x)} \omega\right)\dd\mu(x)\]
(this amounts seeing $\rho(\mu)\in H_1(S,\R)$ as an element of the dual of $H^1(S,\R)$). Write
\[\phi_{[\omega]} = \max_{\mu\in\M(f)} \rho(\mu)\cdot[\omega] 
\qquad\text{and}\qquad
\M_{[\omega]} = \big\{\mu\in\M(f) \mid \rho(\mu)\cdot[\omega] = \phi_{[\omega]}\big\}.\]
By compactness of $\M(f)$, $\phi_{[\omega]}$ is well defined and $\M_{[\omega]}$ is nonempty and compact. Also write
\[X_{[\omega]} = \overline{\bigcup_{\mu\in \M_{[\omega]}} \supp(\mu)}.\]
Finally, we fix a norm $\|\cdot\|$ on $H^1(S,\R)$. 

\begin{prop}\label{PropBoyland2}
Let $f\in\Homeo_0(S)$ such that $\inte\big(\conv(\rot(f))\big)\neq\emptyset$. Then there exists $L_0>0$ such that for any closed 1-form $\omega$, any $x\in X_{[\omega]}$ and any $n\ge 1$, 
\[\left|\left(\int_{I^n(x)}\omega\right) - n \phi_{[\omega]}\right|\le L_0 \big\|[\omega]\big\|.\]
\end{prop}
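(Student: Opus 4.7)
The plan is to combine Theorem~\ref{TheoBndedHomo} with a Ma\~n\'e-type coboundary argument on the supports of maximizing measures: bounded deviations furnish a uniform upper bound for the cocycle $h_n(x):=\int_{I^n(x)}\omega-n\phi_{[\omega]}$ on all of $S$, and the vanishing first moment of $h_1$ on any $\mu\in\M_{[\omega]}$ turns this one-sided bound into a two-sided one on $\mu$-typical orbits, which continuity then spreads to $X_{[\omega]}$.

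First I would set $g_n(x):=\int_{I^n(x)}\omega$. Since the isotopy $I$ concatenates under iteration, $g_n=\sum_{k=0}^{n-1}g_1\circ f^k$ is a Birkhoff sum, and hence so is $h_n$. A standard lift to $\wt S$ (closing the path $\wt{I^n(x)}$ through a fixed basepoint in the fundamental domain $D$, and controlling the boundary integrals via a harmonic representative of $[\omega]$ whose $C^0$-norm on $D$ is bounded by $\|[\omega]\|$) yields a universal constant $C_0$ with
\[\bigl|g_n(x)-[a_x^n]\cdot[\omega]\bigr|\le C_0\|[\omega]\| \qquad \forall x\in S,\ n\in\N.\]
By Theorem~\ref{TheoBndedHomo}, $d\bigl([a_x^n],\,n\rot(f)\bigr)\le L$ for a universal $L$; since each $v\in\rot(f)$ satisfies $v\cdot[\omega]\le\phi_{[\omega]}$, I deduce $[a_x^n]\cdot[\omega]\le n\phi_{[\omega]}+L\|[\omega]\|$, hence the uniform upper bound
\[h_n(x)\le C_1\|[\omega]\| \qquad \forall x\in S,\ n\ge 0,\]
with $C_1:=L+C_0$. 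This part works on all of $S$ and uses only $\inte\bigl(\conv(\rot(f))\bigr)\ne\emptyset$.

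The crux of the argument, which I anticipate as the main obstacle, is turning this one-sided bound into a two-sided one on maximizing supports. Fix an ergodic $\mu\in\M_{[\omega]}$; then $\int h_1\,\dd\mu=\rho(\mu)\cdot[\omega]-\phi_{[\omega]}=0$. I would set $H(y):=\sup_{n\ge 0}h_n(y)$, which by the previous step lies in $[0,C_1\|[\omega]\|]$ (the lower bound coming from $h_0=0$). The additive identity $h_{n+1}(y)=h_1(y)+h_n(f(y))$ gives
\[H(y)=\max\bigl(0,\,h_1(y)+H(f(y))\bigr)\ge h_1(y)+H(f(y)),\]
so $H-H\circ f-h_1\ge 0$. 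Its integral against the $f$-invariant measure $\mu$ is zero (by invariance and $\int h_1\,\dd\mu=0$), forcing pointwise equality $\mu$-a.e.; iterating, $h_n(y)=H(y)-H(f^n(y))$ $\mu$-almost surely, and hence $|h_n(y)|\le C_1\|[\omega]\|$ on a $\mu$-full-measure set, for every $n$.

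Finally, each $h_n$ is continuous in $y$ (the isotopy $I$ is continuous and $\omega$ is smooth), and the full-measure set above is dense in $\supp\mu$, so for each fixed $n$ the inequality $|h_n|\le C_1\|[\omega]\|$ extends to $\supp\mu$. Taking the union over ergodic $\mu\in\M_{[\omega]}$ -- the extreme points of the compact convex face $\M_{[\omega]}\subset\M(f)$, whose supports are dense in $X_{[\omega]}$ -- and then closing up gives the desired bound on all of $X_{[\omega]}$ with $L_0=C_1$, independently of $\omega$. The delicate point is really the coboundary step: the combination of the universal upper bound from Theorem~\ref{TheoBndedHomo} with the vanishing mean $\int h_1\,\dd\mu=0$ forces $h_1$ to be a bounded coboundary on each maximizing support; the rest is either an application of bounded deviations or routine continuity/density.
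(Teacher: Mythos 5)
Your proof is correct and is precisely the classical argument that the paper invokes without writing out (it simply defers to Lellouch, Section~4.3.1): a uniform one-sided bound $h_n\le C_1\|[\omega]\|$ on all of $S$ deduced from Theorem~\ref{TheoBndedHomo}, upgraded to a two-sided bound on maximizing supports by the Atkinson/Ma\~n\'e coboundary identity $h_1=H-H\circ f$ ($\mu$-a.e.\ for each ergodic $\mu\in\M_{[\omega]}$, using $\int h_1\dd\mu=0$), and then continuity of $h_n$ together with density of the union of ergodic maximizing supports in $X_{[\omega]}$. The only caveat, inherited from the statement itself rather than from your argument, is that $\int_{I^n(x)}\omega$ depends on the representative $\omega$ and not only on $[\omega]$, so one must fix a canonical (e.g.\ harmonic) representative of each class, as you implicitly do, for the inequality to hold with a constant depending only on $\|[\omega]\|$.
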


This proposition, as Corollary~\ref{CoroBoyland1}, is a classical consequence of Theorem~\ref{TheoBndedHomo} (as noted in the first paragraph of \cite[Section~4.3.1]{lellouch}, the only hypothesis the author uses is the boundedness of deviations, \emph{i.e.}\ the conclusion of Theorem~\ref{TheoBndedHomo}). 
\bigskip

Corollary~\ref{CoroBoyland2} then follows directly from Proposition~\ref{PropBoyland2}, by taking $[\omega]$ associated to the linear form defining the hyperplane $H$ of the definition of an exposed point.

\section{Continuity properties}

The following completes Definition~\ref{DefErgHomRot}:

\begin{definition}\label{DefErgHomRot2}
Let $f \in \Homeo_0(S)$.
The \emph{measures' (homological) rotation set} $\rotm(f)$ of $f$ is
\[\rotm(f) = \big\{\rho(\mu) \mid \mu \in \M(f)\big\}.\] 
\end{definition}

By \eqref{eq:homologyequation}, the map $\mu\mapsto \rho(\mu)$ is continuous (for the weak-$*$ topology) and affine. Hence $\rotm(f)$ is a convex compact subset of $H_1(S,\R)$. 

\begin{lemma}
The map $f\mapsto \rotm(f)$ is upper semi-continuous.
\end{lemma}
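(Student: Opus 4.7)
The plan is to prove the contrapositive/closed-graph formulation of upper semi-continuity. Fix $f\in \Homeo_0(S)$ and a sequence $f_n\to f$ in $\Homeo_0(S)$ together with $v_n\in\rotm(f_n)$ such that $v_n\to v\in H_1(S,\R)$; we want $v\in\rotm(f)$. For each $n$ pick $\mu_n\in\M(f_n)$ with $\rho_{f_n}(\mu_n)=v_n$. Since $S$ is compact, Prokhorov's theorem provides a subsequence (still denoted $(\mu_n)$) converging weakly-$*$ to some Borel probability measure $\mu$ on $S$.

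The first step is to check $\mu\in\M(f)$. For any $\phi\in C(S)$ one has $\int\phi\circ f_n\dd\mu_n=\int\phi\dd\mu_n$. Because $f_n\to f$ uniformly on the compact surface $S$, the functions $\phi\circ f_n$ converge uniformly to $\phi\circ f$, so passing to the limit in both sides yields $\int\phi\circ f\dd\mu=\int\phi\dd\mu$; this holds for every continuous $\phi$, hence $\mu$ is $f$-invariant.

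The main point is to show $\rho_{f_n}(\mu_n)\to\rho_f(\mu)$. The integer-valued definition via the fundamental domain is inconvenient here, because the function $y\mapsto [a_y]$ can jump on the boundaries of translates of $D$ and these boundaries could carry $\mu$-mass. I would instead test the rotation vector against closed $1$-forms (i.e.\ use the dual pairing between $H_1(S,\R)$ and $H^1(S,\R)$): for any closed $1$-form $\omega$ on $S$,
\[
\rho_f(\mu)\cdot[\omega] \;=\; \int_S \left(\int_{I_f(x)}\omega\right)\dd\mu(x),
\]
where $I_f(x)$ is the isotopy path from $x$ to $f(x)$ obtained by projecting to $S$ the geodesic segment from $\wt x$ to $\wt f(\wt x)$ in $\wt S\cong\Hy^2$ (the preferred lift makes this well-defined up to a fixed homotopy class). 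Since $\wt{f_n}\to\wt f$ uniformly on any $\G$-fundamental domain, the geodesic isotopy paths $I_{f_n}$ converge to $I_f$ uniformly in $x$. Consequently, for a fixed smooth closed $1$-form $\omega$, the functions $x\mapsto\int_{I_{f_n}(x)}\omega$ are uniformly bounded and converge uniformly to $x\mapsto\int_{I_f(x)}\omega$. Combined with the weak-$*$ convergence $\mu_n\to\mu$, this gives
\[
\rho_{f_n}(\mu_n)\cdot[\omega] \;=\; \int_S \!\!\left(\int_{I_{f_n}(x)}\!\!\omega\right)\dd\mu_n(x) \;\longrightarrow\; \int_S \!\!\left(\int_{I_f(x)}\!\!\omega\right)\dd\mu(x) \;=\; \rho_f(\mu)\cdot[\omega].
\]
As $[\omega]$ ranges over a basis of $H^1(S,\R)$, this shows $\rho_{f_n}(\mu_n)\to\rho_f(\mu)$ in $H_1(S,\R)$, hence $v=\rho_f(\mu)\in\rotm(f)$.

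The main obstacle is the third step: reconciling the fundamental-domain definition of $[a_y]$, which is only locally constant and can be discontinuous on a $\mu$-positive set, with the requirement of a continuous dependence on $f$. Switching to the cohomological formulation via closed $1$-forms bypasses this difficulty, since it encodes the same rotation vector but depends continuously on the isotopy path $I_f(x)$, and the preferred lift convention provides a canonical choice of isotopy that varies continuously with $f$.
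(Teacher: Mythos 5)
Your proof is correct and follows essentially the same route as the paper, which simply invokes the upper semi-continuity of $f\mapsto\M(f)$ together with the continuity of $\mu\mapsto\rho(\mu)$. Your extra care in establishing the \emph{joint} continuity of $(f,\mu)\mapsto\rho_f(\mu)$ via the pairing with closed $1$-forms is a legitimate and welcome filling-in of a point the paper leaves implicit (and it is consistent with the cohomological formula the paper itself uses in Section~5).
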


\begin{proof}
This follows directly from the upper-semi continuity of the map $f\mapsto \M(f)$ and the continuity of $\mu\mapsto \rho(\mu)$.
\end{proof}

The following proposition is a higher genus version of the continuity of the map $f\mapsto \rot(f)$ at any torus homeomorphism whose rotation set has nonempty interior \cite{llibremackay}.  

\begin{prop}\label{PropContinuityRot}
The map $f\mapsto \rotm(f)$ is continuous at every homeomorphism $f$ such that $\inte\rotm(f)\neq \emptyset$ (\emph{i.e.}~with big rotation set).

The map $f\mapsto \rote(f)$ is continuous at every homeomorphism $f$ such that $\inte\rote(f)\neq \emptyset$.
\end{prop}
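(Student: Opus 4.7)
Since the preceding lemma establishes upper semi-continuity of $f\mapsto\rotm(f)$, the first statement reduces to lower semi-continuity of $\rotm$ at any $f$ with big rotation set. The plan is to combine the identity $\rotm(f) = \overline{\conv}(\rote(f))$ (which follows from the ergodic decomposition and the affine continuity of $\mu\mapsto\rho(\mu)$) with the fact, coming from Proposition~\ref{Prop:decomposition} and \cite[Subsection~3.1]{G25Cvx1}, that each piece $\rho_i$ ($i\in I_{\mathrm h}$) contains a dense subset of rotation vectors of periodic orbits produced by rotational horseshoes.

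Fix $\varep>0$ and pick a finite set $F\subset\rote(f)$, $\varep$-dense in $\rote(f)$, each of whose elements is realised by a rotational horseshoe associated to some Markovian intersection $\wt f^n(R_1)\cap T R_2$ of rectangles (in the sense of \cite[Definition~3.8]{G25Cvx1}). The crucial point is that such a Markovian intersection is a topologically transverse crossing of rectangles, and is therefore an open condition in the $C^0$ topology. Consequently there exists a neighborhood $U$ of $f$ in $\Homeo_0(S)$ on which the finitely many horseshoes attached to the vectors of $F$ persist \emph{with the same deck transformations}; in particular $F\subset\rote(g)$ for every $g\in U$, so $\conv(F)\subset\rotm(g)$. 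Since $\conv(F)$ is $\varep$-close in Hausdorff distance to $\overline{\conv}(\rote(f))=\rotm(f)$, this yields the desired lower semi-continuity.

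For the second statement, if $\inte(\rote(f))\neq\emptyset$ then $f$ has big rotation set and, since a finite union of proper subspaces has empty interior, the decomposition $H_1(S,\R)=\bigoplus_j V_j$ of Proposition~\ref{Prop:decomposition} must contain a unique $V_i=H_1(S,\R)$ with $\rote(f)=\rho_i$. As $\overline{\rho_i}$ is convex, $\rotm(f)=\overline{\conv}(\rote(f))=\overline{\rote(f)}$. Upper semi-continuity of $\rote$ at $f$ then follows from the inclusion $\rote(g)\subset\rotm(g)$ together with upper semi-continuity of $\rotm$, as the $\varep$-neighborhoods of $\rote(f)$ and of its closure coincide. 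Lower semi-continuity is obtained by the horseshoe-persistence argument already used, the horseshoe rotation vectors being dense in $\rote(f)=\rho_i$. The main obstacle in this plan is the persistence step: one must verify carefully that the Markovian intersection condition of \cite[Definition~3.8]{G25Cvx1} is genuinely robust under $C^0$ perturbations and that the resulting periodic rotation vectors, determined by the induced subshift and by the deck transformation $T$, are unchanged.
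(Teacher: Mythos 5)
Your proof is correct and follows essentially the same route as the paper: lower semi-continuity via the density of rotation vectors of $\rote(f)$ realised by rotational horseshoes together with the $C^0$-persistence of Markovian intersections (\cite[Lemma~3.6]{G25Cvx1}), and upper semi-continuity of $\rotm$ from the preceding lemma. Your additional step justifying upper semi-continuity of $\rote$ at points with $\inte\rote(f)\neq\emptyset$ (forcing $n=1$ in Proposition~\ref{Prop:decomposition}, so that $\overline{\rote(f)}$ is convex and equals $\rotm(f)$) is a detail the paper leaves implicit, and it is correct.
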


As a consequence, the sets of homeomorphisms with big rotation set and whose ergodic rotation set has nonempty interior are both open in $\Homeo_0(S)$.

\begin{proof}
It suffices to show the lower semicontinuity of these maps.
This is a consequence of the persistence of horseshoes under $C^0$ perturbations (\cite[Lemma~3.6]{G25Cvx1}) and the density of elements $(r_\omega)_{\omega\in\Omega}$ of $\rote(f)\cap H_1(S,\Q)$ realised by rotational horseshoes: such a family of horseshoes was built in \cite[Subsection~4.1]{G25Cvx1}.
\end{proof}

\begin{lemma}\label{ExFinal}
The maps $f\mapsto \rote(f)$ and  $f\mapsto \rot(f)$ are not upper semi-continuous, even in restriction to homeomorphisms with big rotation set.
\end{lemma}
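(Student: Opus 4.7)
The plan is to exhibit, for $S$ of genus $\ge 2$, a sequence $(f_n)_n$ of homeomorphisms with big rotation set converging in $C^0$ to some $f\in\Homeo_0(S)$ with big rotation set, while $\rot(f_n)$ and $\rote(f_n)$ fail to be eventually contained in an arbitrarily small neighborhood of $\rot(f)$ and $\rote(f)$ respectively.

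First I would take $f$ to be the nonwandering example of Subsection~\ref{paragSharpness} (without any connection between classes), so that by Proposition~\ref{CoroTheoConvexNW} its rotation set is $\rot(f)=\bigcup_i\rho_i$, a disjoint (except at $0$) union of convex sets lying in pairwise supplementary subspaces $V_i\subset H_1(S,\R)$, with at least two of them nontrivial. Picking $v_i\in\rho_i\setminus\{0\}$ for $i=1,2$, the midpoint $w:=(v_1+v_2)/2$ belongs to $V_1\oplus V_2$; since $V_1\cap V_2=\{0\}$ and since $V_1\oplus V_2$ is in direct sum with the remaining $V_i$, the vector $w$ does not lie in any $V_i$. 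As the $V_i$ are finitely many closed subspaces, this gives a positive distance $\delta:=d(w,\bigcup_iV_i)>0$, and a fortiori $d(w,\rot(f))\ge\delta$.

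Next I would construct $f_n$ as a $C^0$-small perturbation of $f$, inserting a connection $\cl_1\overset{f_n}\to\cl_2$. Let $B_1,B_2$ be the invariant open sets given by Proposition~\ref{PropExistBj}, and fix inside them rotational horseshoe rectangles $R_1, R_2$ (which exist by \cite[Subsection~4.1]{G25Cvx1}) realising rotation vectors close to $v_1, v_2$. Choose a thin tube $T_n\subset S$ of diameter going to $0$ (but whose length need not go to $0$) joining a piece of $R_1$ to $R_2$, and define $f_n$ to coincide with $f$ outside $T_n$ and to induce inside $T_n$ a slow drift of pointwise magnitude $O(1/n)$ aligned so that, after finitely many iterations, some subrectangle of $R_1$ is shear-transported along $T_n$ to intersect $R_2$ in the Markovian way of \cite[Definition~3.8]{G25Cvx1}. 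By $C^0$-persistence of rotational horseshoes (\cite[Lemma~3.6]{G25Cvx1}), the rectangles $R_1, R_2$ remain rotational horseshoes for $f_n$, with rotation vectors arbitrarily close to those for $f$, and the newly engineered Markovian intersection yields the desired connection. Corollary~\ref{CoroPropRotFRotG} applied to $f_n$ then gives $\conv(\rho_1\cup\rho_2)\subset\rot(f_n)$, so $w\in\rot(f_n)$ for all large $n$; the same conclusion holds for $\rote(f_n)$ because the new heteroclinic horseshoe is topologically mixing and so supports ergodic measures whose rotation vectors fill a dense subset of $\inte\conv(\rho_1\cup\rho_2)$, which (up to slightly adjusting $v_1, v_2$ initially) contains $w$.

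Hence $f_n\to f$ in $C^0$ while $w\in\rot(f_n)\cap\rote(f_n)$ for all large $n$ and $d(w,\rot(f))\ge\delta>0$, contradicting upper semi-continuity of both $\rot$ and $\rote$ at $f$. The main obstacle is the rigorous construction of the perturbation $f_n$: one must check that the slow drift in a thin tube can be engineered, within an arbitrarily small $C^0$ neighborhood of $f$, to produce a genuine Markovian intersection between the persistent horseshoes associated to $\cl_1$ and $\cl_2$. This is achieved by aligning the drift with the unstable boundary of $R_1$ and the stable boundary of $R_2$ and using the $C^0$-openness of the Markovian condition.
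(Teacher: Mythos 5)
Your overall strategy (a sequence along which two rotational classes are connected, degenerating to a limit where the connection disappears) is the same as the paper's; the paper builds the sequence $f_n$ directly with chains of $n$ intermediate rectangles crossing the separating annulus and lets $f$ be the limit where the annulus dynamics becomes the identity, whereas you perturb a fixed nonwandering $f$ by a slow drift in a thin tube. For the map $f\mapsto\rot(f)$ your argument is essentially correct: a one-way Markovian connection between the horseshoes of $\cl_1$ and $\cl_2$ does give $\conv(\rho_1\cup\rho_2)\subset\rot(f_n)$ via Corollary~\ref{CoroPropRotFRotG}, and the midpoint $w$ stays at definite distance from $\rot(f)\subset\bigcup_i V_i$.

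There is however a genuine gap in the $\rote$ part. You only engineer a \emph{one-directional} connection $\cl_1\to\cl_2$, and then claim that ``the new heteroclinic horseshoe is topologically mixing'' and supports ergodic measures with rotation vectors dense in $\inte\conv(\rho_1\cup\rho_2)$. This is false: with transitions $R_1\to R_1$, $R_1\to R_2$, $R_2\to R_2$ but no return $R_2\to R_1$, every invariant measure of the resulting system is supported on the maximal invariant set of $R_1$ or on that of $R_2$ (the transit orbits are wandering), so no new ergodic rotation vectors are produced. More structurally, a one-way connection does not merge the equivalence classes of $\Merg(f_n)$, so Proposition~\ref{Prop:decomposition} still forces $\rote(f_n)\subset\bigcup_i V_i$ and hence $w\notin\rote(f_n)$: convex combinations enter the \emph{ergodic} rotation set only when the two measures become equivalent, i.e.\ when there are transverse connections \emph{in both directions}. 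This is exactly why the paper's construction uses two chains of rectangles, $R_L\to R_1\to\cdots\to R_n\to R_R$ and $R_R\to R'_1\to\cdots\to R'_n\to R_L$, producing a cycle. The fix for your proposal is to add a second tube carrying a drift from $\cl_2$ back to $\cl_1$; with that modification (and checking that the resulting cycle of Markovian intersections yields a genuine rotational horseshoe containing both deck transformations), your argument goes through.
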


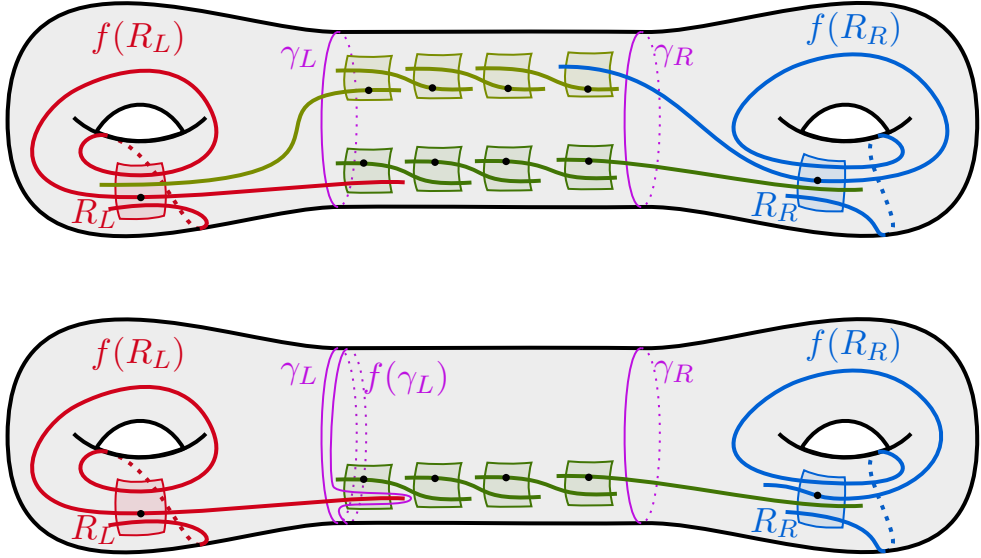
\begin{figure}[!t]
\begin{center}
\vspace{-25pt}
\tikzset{every picture/.style={line width=0.75pt}} 

\begin{tikzpicture}[x=0.75pt,y=0.75pt,yscale=-1.1,xscale=1.1]
\draw [color={rgb, 255:red, 0; green, 97; blue, 212 }  ,draw opacity=1 ][line width=1.5]  [dash pattern={on 1.69pt off 2.76pt}]  (508.38,172.75) .. controls (521.63,171.55) and (491.27,131.95) .. (505,127.55) ;
\draw [color={rgb, 255:red, 208; green, 2; blue, 27 }  ,draw opacity=1 ][line width=1.5]  [dash pattern={on 1.69pt off 2.76pt}]  (155.4,127.42) .. controls (179.4,135.02) and (189.2,171.22) .. (197.2,169.82) ;
\draw  [fill={rgb, 255:red, 74; green, 74; blue, 74 }  ,fill opacity=0.1 ][line width=1.5]  (110,120) .. controls (110.37,31.71) and (212.75,80.04) .. (260,80) .. controls (307.25,79.96) and (352.25,79.54) .. (400,80) .. controls (447.75,80.46) and (548.75,31.04) .. (550,120) .. controls (551.25,208.96) and (449.75,159.46) .. (400,160) .. controls (350.25,160.54) and (308.25,159.46) .. (260,160) .. controls (211.75,160.54) and (109.63,208.29) .. (110,120) -- cycle ;
\draw  [fill={rgb, 255:red, 255; green, 255; blue, 255 }  ,fill opacity=1 ][line width=1.5]  (189.89,125.85) .. controls (178.41,129.97) and (166.62,132.39) .. (150.3,125.66) .. controls (160.28,109.13) and (180.17,109.03) .. (189.89,125.85) -- cycle ;
\draw [line width=1.5]    (140,119.08) .. controls (155.08,133.81) and (186.08,133.47) .. (200,119.08) ;

\draw  [fill={rgb, 255:red, 255; green, 255; blue, 255 }  ,fill opacity=1 ][line width=1.5]  (509.89,125.85) .. controls (498.41,129.97) and (486.62,132.39) .. (470.3,125.66) .. controls (480.28,109.13) and (500.17,109.03) .. (509.89,125.85) -- cycle ;
\draw [line width=1.5]    (460,119.08) .. controls (475.08,133.81) and (506.08,133.47) .. (520,119.08) ;

\draw [color={rgb, 255:red, 189; green, 16; blue, 224 }  ,draw opacity=1 ]   (260,80) .. controls (250.28,80.08) and (249.61,160.08) .. (260,160) ;
\draw [color={rgb, 255:red, 189; green, 16; blue, 224 }  ,draw opacity=1 ] [dash pattern={on 0.84pt off 2.51pt}]  (260,80) .. controls (271.61,80.08) and (270.94,159.75) .. (260,160) ;
\draw [color={rgb, 255:red, 189; green, 16; blue, 224 }  ,draw opacity=1 ]   (397.54,80) .. controls (387.82,80.08) and (387.15,160.08) .. (397.54,160) ;
\draw [color={rgb, 255:red, 189; green, 16; blue, 224 }  ,draw opacity=1 ] [dash pattern={on 0.84pt off 2.51pt}]  (397.54,80) .. controls (409.15,80.08) and (408.49,159.75) .. (397.54,160) ;
\draw  [color={rgb, 255:red, 208; green, 2; blue, 27 }  ,draw opacity=1 ][fill={rgb, 255:red, 208; green, 2; blue, 27 }  ,fill opacity=0.1 ] (160,140) .. controls (164.8,141.62) and (170.8,141.42) .. (180,140) .. controls (181.61,146.08) and (182.61,156.43) .. (181,164.02) .. controls (174.6,165.82) and (167.8,165.62) .. (161,165.42) .. controls (158.28,158.17) and (158.61,146.42) .. (160,140) -- cycle ;
\draw  [color={rgb, 255:red, 65; green, 117; blue, 5 }  ,draw opacity=1 ][fill={rgb, 255:red, 65; green, 117; blue, 5 }  ,fill opacity=0.1 ] (264.33,133.33) .. controls (272,133.88) and (275.4,134.08) .. (284.33,133.33) .. controls (282.8,138.48) and (282.4,145.08) .. (284.33,153.33) .. controls (277,153.48) and (272.8,153.68) .. (264.33,153.33) .. controls (261.61,146.08) and (262.94,139.75) .. (264.33,133.33) -- cycle ;
\draw [color={rgb, 255:red, 208; green, 2; blue, 27 }  ,draw opacity=1 ][line width=1.5]    (155.4,127.42) .. controls (144.4,124.82) and (134.41,139.12) .. (155.05,143.98) .. controls (175.68,148.85) and (215.15,142.97) .. (202.2,115.64) .. controls (189.25,88.31) and (156,98.13) .. (140.47,106.73) .. controls (124.93,115.33) and (108.93,140.95) .. (134.87,151.73) .. controls (160.8,162.52) and (257.71,146.72) .. (290.21,148.75) ;
\draw [color={rgb, 255:red, 65; green, 117; blue, 5 }  ,draw opacity=1 ][line width=1.5]    (258.94,140.08) .. controls (302.94,138.75) and (282.84,151.18) .. (320.51,148.84) ;
\draw  [color={rgb, 255:red, 65; green, 117; blue, 5 }  ,draw opacity=1 ][fill={rgb, 255:red, 65; green, 117; blue, 5 }  ,fill opacity=0.1 ] (295.93,132.53) .. controls (303.6,133.08) and (307,133.28) .. (315.93,132.53) .. controls (314.4,137.68) and (314,144.28) .. (315.93,152.53) .. controls (308.6,152.68) and (304.4,152.88) .. (295.93,152.53) .. controls (293.21,145.28) and (294.54,138.95) .. (295.93,132.53) -- cycle ;
\draw [color={rgb, 255:red, 65; green, 117; blue, 5 }  ,draw opacity=1 ][line width=1.5]    (290.26,139.28) .. controls (334.26,137.95) and (314.27,150.32) .. (351.94,147.99) ;
\draw  [color={rgb, 255:red, 65; green, 117; blue, 5 }  ,draw opacity=1 ][fill={rgb, 255:red, 65; green, 117; blue, 5 }  ,fill opacity=0.1 ] (327.73,132.33) .. controls (335.4,132.88) and (338.8,133.08) .. (347.73,132.33) .. controls (346.2,137.48) and (345.8,144.08) .. (347.73,152.33) .. controls (340.4,152.48) and (336.2,152.68) .. (327.73,152.33) .. controls (325.01,145.08) and (326.34,138.75) .. (327.73,132.33) -- cycle ;
\draw [color={rgb, 255:red, 65; green, 117; blue, 5 }  ,draw opacity=1 ][line width=1.5]    (322.06,139.08) .. controls (366.06,137.75) and (348.84,148.89) .. (386.51,146.56) ;
\draw  [color={rgb, 255:red, 65; green, 117; blue, 5 }  ,draw opacity=1 ][fill={rgb, 255:red, 65; green, 117; blue, 5 }  ,fill opacity=0.1 ] (364.53,131.73) .. controls (372.2,132.28) and (375.6,132.48) .. (384.53,131.73) .. controls (383,136.88) and (382.6,143.48) .. (384.53,151.73) .. controls (377.2,151.88) and (373,152.08) .. (364.53,151.73) .. controls (361.81,144.48) and (363.14,138.15) .. (364.53,131.73) -- cycle ;
\draw  [color={rgb, 255:red, 0; green, 97; blue, 212 }  ,draw opacity=1 ][fill={rgb, 255:red, 0; green, 97; blue, 212 }  ,fill opacity=0.1 ] (471.47,135.82) .. controls (476.74,138.23) and (482.87,139.22) .. (489.87,139.02) .. controls (490.47,144.62) and (489.77,154.23) .. (488.87,162.02) .. controls (481.47,162.02) and (477.11,162.35) .. (469.17,159.6) .. controls (467.47,152.22) and (469.27,141.42) .. (471.47,135.82) -- cycle ;
\draw [color={rgb, 255:red, 65; green, 117; blue, 5 }  ,draw opacity=1 ][line width=1.5]    (359.14,138.48) .. controls (403.14,137.15) and (460.4,154.35) .. (498.07,152.02) ;
\draw  [color={rgb, 255:red, 121; green, 141; blue, 0 }  ,draw opacity=1 ][fill={rgb, 255:red, 121; green, 141; blue, 0 }  ,fill opacity=0.1 ] (264.33,90.93) .. controls (272,91.48) and (275.4,91.68) .. (284.33,90.93) .. controls (282.8,96.08) and (282.4,102.68) .. (284.33,110.93) .. controls (277,111.08) and (272.8,111.28) .. (264.33,110.93) .. controls (261.61,103.68) and (262.94,97.35) .. (264.33,90.93) -- cycle ;
\draw [color={rgb, 255:red, 121; green, 141; blue, 0 }  ,draw opacity=1 ][line width=1.5]    (258.94,97.68) .. controls (302.94,96.35) and (283.41,108.32) .. (321.08,105.99) ;
\draw  [color={rgb, 255:red, 121; green, 141; blue, 0 }  ,draw opacity=1 ][fill={rgb, 255:red, 121; green, 141; blue, 0 }  ,fill opacity=0.1 ] (295.93,90.13) .. controls (303.6,90.68) and (307,90.88) .. (315.93,90.13) .. controls (314.4,95.28) and (314,101.88) .. (315.93,110.13) .. controls (308.6,110.28) and (304.4,110.48) .. (295.93,110.13) .. controls (293.21,102.88) and (294.54,96.55) .. (295.93,90.13) -- cycle ;
\draw [color={rgb, 255:red, 121; green, 141; blue, 0 }  ,draw opacity=1 ][line width=1.5]    (290.54,96.88) .. controls (334.54,95.55) and (316.27,108.32) .. (353.94,105.99) ;
\draw  [color={rgb, 255:red, 121; green, 141; blue, 0 }  ,draw opacity=1 ][fill={rgb, 255:red, 121; green, 141; blue, 0 }  ,fill opacity=0.1 ] (327.73,89.93) .. controls (335.4,90.48) and (338.8,90.68) .. (347.73,89.93) .. controls (346.2,95.08) and (345.8,101.68) .. (347.73,109.93) .. controls (340.4,110.08) and (336.2,110.28) .. (327.73,109.93) .. controls (325.01,102.68) and (326.34,96.35) .. (327.73,89.93) -- cycle ;
\draw [color={rgb, 255:red, 121; green, 141; blue, 0 }  ,draw opacity=1 ][line width=1.5]    (322.34,96.68) .. controls (366.34,95.35) and (349.7,108.32) .. (387.37,105.99) ;
\draw  [color={rgb, 255:red, 121; green, 141; blue, 0 }  ,draw opacity=1 ][fill={rgb, 255:red, 121; green, 141; blue, 0 }  ,fill opacity=0.1 ] (364.53,89.33) .. controls (372.2,89.88) and (375.6,90.08) .. (384.53,89.33) .. controls (383,94.48) and (382.6,101.08) .. (384.53,109.33) .. controls (377.2,109.48) and (373,109.68) .. (364.53,109.33) .. controls (361.81,102.08) and (363.14,95.75) .. (364.53,89.33) -- cycle ;
\draw [color={rgb, 255:red, 0; green, 97; blue, 212 }  ,draw opacity=1 ][line width=1.5]    (360.07,95.88) .. controls (423,94.53) and (423.6,147.75) .. (482.87,148.22) .. controls (542.13,148.68) and (543.85,125.2) .. (527.83,105.75) .. controls (511.81,86.3) and (487.73,88.05) .. (468.33,95.25) .. controls (448.93,102.45) and (411.47,135.62) .. (474.27,141.02) .. controls (537.07,146.42) and (513.01,124.95) .. (505,127.55) ;
\draw [color={rgb, 255:red, 121; green, 141; blue, 0 }  ,draw opacity=1 ][line width=1.5]    (151.77,149.88) .. controls (303.37,151.28) and (191.37,103.28) .. (288.57,106.68) ;
\draw  [draw opacity=0][fill={rgb, 255:red, 0; green, 0; blue, 0 }  ,fill opacity=1 ] (269.91,139.93) .. controls (269.91,138.99) and (270.67,138.24) .. (271.6,138.24) .. controls (272.54,138.24) and (273.3,138.99) .. (273.3,139.93) .. controls (273.3,140.87) and (272.54,141.63) .. (271.6,141.63) .. controls (270.67,141.63) and (269.91,140.87) .. (269.91,139.93) -- cycle ;
\draw  [draw opacity=0][fill={rgb, 255:red, 0; green, 0; blue, 0 }  ,fill opacity=1 ] (302.28,139.56) .. controls (302.28,138.62) and (303.04,137.86) .. (303.98,137.86) .. controls (304.91,137.86) and (305.67,138.62) .. (305.67,139.56) .. controls (305.67,140.49) and (304.91,141.25) .. (303.98,141.25) .. controls (303.04,141.25) and (302.28,140.49) .. (302.28,139.56) -- cycle ;
\draw  [draw opacity=0][fill={rgb, 255:red, 0; green, 0; blue, 0 }  ,fill opacity=1 ] (334.53,139.18) .. controls (334.53,138.24) and (335.29,137.49) .. (336.23,137.49) .. controls (337.16,137.49) and (337.92,138.24) .. (337.92,139.18) .. controls (337.92,140.12) and (337.16,140.88) .. (336.23,140.88) .. controls (335.29,140.88) and (334.53,140.12) .. (334.53,139.18) -- cycle ;
\draw  [draw opacity=0][fill={rgb, 255:red, 0; green, 0; blue, 0 }  ,fill opacity=1 ] (372.05,139.06) .. controls (372.05,138.12) and (372.81,137.36) .. (373.75,137.36) .. controls (374.68,137.36) and (375.44,138.12) .. (375.44,139.06) .. controls (375.44,139.99) and (374.68,140.75) .. (373.75,140.75) .. controls (372.81,140.75) and (372.05,139.99) .. (372.05,139.06) -- cycle ;
\draw  [draw opacity=0][fill={rgb, 255:red, 0; green, 0; blue, 0 }  ,fill opacity=1 ] (371.65,106.11) .. controls (371.65,105.17) and (372.41,104.41) .. (373.35,104.41) .. controls (374.28,104.41) and (375.04,105.17) .. (375.04,106.11) .. controls (375.04,107.04) and (374.28,107.8) .. (373.35,107.8) .. controls (372.41,107.8) and (371.65,107.04) .. (371.65,106.11) -- cycle ;
\draw  [draw opacity=0][fill={rgb, 255:red, 0; green, 0; blue, 0 }  ,fill opacity=1 ] (301.1,105.57) .. controls (301.1,104.64) and (301.86,103.88) .. (302.79,103.88) .. controls (303.73,103.88) and (304.49,104.64) .. (304.49,105.57) .. controls (304.49,106.51) and (303.73,107.27) .. (302.79,107.27) .. controls (301.86,107.27) and (301.1,106.51) .. (301.1,105.57) -- cycle ;
\draw  [draw opacity=0][fill={rgb, 255:red, 0; green, 0; blue, 0 }  ,fill opacity=1 ] (335.44,105.81) .. controls (335.44,104.87) and (336.2,104.11) .. (337.13,104.11) .. controls (338.07,104.11) and (338.83,104.87) .. (338.83,105.81) .. controls (338.83,106.74) and (338.07,107.5) .. (337.13,107.5) .. controls (336.2,107.5) and (335.44,106.74) .. (335.44,105.81) -- cycle ;
\draw  [draw opacity=0][fill={rgb, 255:red, 0; green, 0; blue, 0 }  ,fill opacity=1 ] (272.18,106.68) .. controls (272.18,105.74) and (272.94,104.99) .. (273.87,104.99) .. controls (274.81,104.99) and (275.57,105.74) .. (275.57,106.68) .. controls (275.57,107.62) and (274.81,108.38) .. (273.87,108.38) .. controls (272.94,108.38) and (272.18,107.62) .. (272.18,106.68) -- cycle ;
\draw [color={rgb, 255:red, 208; green, 2; blue, 27 }  ,draw opacity=1 ][line width=1.5]    (155.8,161.02) .. controls (196.2,154.22) and (206.6,168.42) .. (197.2,169.82) ;
\draw [color={rgb, 255:red, 0; green, 97; blue, 212 }  ,draw opacity=1 ][line width=1.5]    (508.38,172.75) .. controls (500.65,173.95) and (511.32,158.08) .. (463.05,154.88) ;
\draw  [draw opacity=0][fill={rgb, 255:red, 0; green, 0; blue, 0 }  ,fill opacity=1 ] (168.78,155.68) .. controls (168.78,154.74) and (169.54,153.99) .. (170.47,153.99) .. controls (171.41,153.99) and (172.17,154.74) .. (172.17,155.68) .. controls (172.17,156.62) and (171.41,157.38) .. (170.47,157.38) .. controls (169.54,157.38) and (168.78,156.62) .. (168.78,155.68) -- cycle ;
\draw  [draw opacity=0][fill={rgb, 255:red, 0; green, 0; blue, 0 }  ,fill opacity=1 ] (475.87,147.88) .. controls (475.87,146.94) and (476.63,146.19) .. (477.56,146.19) .. controls (478.5,146.19) and (479.26,146.94) .. (479.26,147.88) .. controls (479.26,148.82) and (478.5,149.58) .. (477.56,149.58) .. controls (476.63,149.58) and (475.87,148.82) .. (475.87,147.88) -- cycle ;

\draw (161.37,154.72) node [anchor=north east] [inner sep=0.75pt]  [color={rgb, 255:red, 208; green, 2; blue, 27 }  ,opacity=1 ,xscale=1.2,yscale=1.2]  {$R_{L}$};
\draw (470.88,151.4) node [anchor=north east] [inner sep=0.75pt]  [color={rgb, 255:red, 0; green, 97; blue, 212 }  ,opacity=1 ,xscale=1.2,yscale=1.2]  {$R_{R}$};
\draw (252.8,91.82) node [anchor=east] [inner sep=0.75pt]  [color={rgb, 255:red, 189; green, 16; blue, 224 }  ,opacity=1 ,xscale=1.2,yscale=1.2]  {$\gamma _{L}$};
\draw (401.86,90.9) node [anchor=west] [inner sep=0.75pt]  [color={rgb, 255:red, 189; green, 16; blue, 224 }  ,opacity=1 ,xscale=1.2,yscale=1.2]  {$\gamma _{R}$};
\draw (169.32,92.26) node [anchor=south] [inner sep=0.75pt]  [color={rgb, 255:red, 208; green, 2; blue, 27 }  ,opacity=1 ,xscale=1.2,yscale=1.2]  {$f( R_{L})$};
\draw (494.06,88.47) node [anchor=south] [inner sep=0.75pt]  [color={rgb, 255:red, 0; green, 97; blue, 212 }  ,opacity=1 ,xscale=1.2,yscale=1.2]  {$f( R_{R})$};

\end{tikzpicture}\vspace{-30pt}
\begin{tikzpicture}[x=0.75pt,y=0.75pt,yscale=-1.1,xscale=1.1]

\draw [color={rgb, 255:red, 189; green, 16; blue, 224 }  ,draw opacity=1 ] [dash pattern={on 0.84pt off 2.51pt}]  (280,100) .. controls (291.61,100.08) and (290.94,179.75) .. (280,180) ;
\draw [color={rgb, 255:red, 189; green, 16; blue, 224 }  ,draw opacity=1 ] [dash pattern={on 0.84pt off 2.51pt}]  (284.29,100.33) .. controls (295.9,100.42) and (295.23,180.08) .. (284.29,180.33) ;
\draw [color={rgb, 255:red, 0; green, 97; blue, 212 }  ,draw opacity=1 ][line width=1.5]  [dash pattern={on 1.69pt off 2.76pt}]  (528.38,192.75) .. controls (541.63,191.55) and (511.27,151.95) .. (525,147.55) ;
\draw [color={rgb, 255:red, 208; green, 2; blue, 27 }  ,draw opacity=1 ][line width=1.5]  [dash pattern={on 1.69pt off 2.76pt}]  (175.4,147.42) .. controls (199.4,155.02) and (209.2,191.22) .. (217.2,189.82) ;
\draw  [fill={rgb, 255:red, 74; green, 74; blue, 74 }  ,fill opacity=0.1 ][line width=1.5]  (130,140) .. controls (130.37,51.71) and (232.75,100.04) .. (280,100) .. controls (327.25,99.96) and (372.25,99.54) .. (420,100) .. controls (467.75,100.46) and (568.75,51.04) .. (570,140) .. controls (571.25,228.96) and (469.75,179.46) .. (420,180) .. controls (370.25,180.54) and (328.25,179.46) .. (280,180) .. controls (231.75,180.54) and (129.63,228.29) .. (130,140) -- cycle ;
\draw  [fill={rgb, 255:red, 255; green, 255; blue, 255 }  ,fill opacity=1 ][line width=1.5]  (209.89,145.85) .. controls (198.41,149.97) and (186.62,152.39) .. (170.3,145.66) .. controls (180.28,129.13) and (200.17,129.03) .. (209.89,145.85) -- cycle ;
\draw [line width=1.5]    (160,139.08) .. controls (175.08,153.81) and (206.08,153.47) .. (220,139.08) ;

\draw  [fill={rgb, 255:red, 255; green, 255; blue, 255 }  ,fill opacity=1 ][line width=1.5]  (529.89,145.85) .. controls (518.41,149.97) and (506.62,152.39) .. (490.3,145.66) .. controls (500.28,129.13) and (520.17,129.03) .. (529.89,145.85) -- cycle ;
\draw [line width=1.5]    (480,139.08) .. controls (495.08,153.81) and (526.08,153.47) .. (540,139.08) ;

\draw [color={rgb, 255:red, 189; green, 16; blue, 224 }  ,draw opacity=1 ]   (280,100) .. controls (270.28,100.08) and (269.61,180.08) .. (280,180) ;
\draw [color={rgb, 255:red, 189; green, 16; blue, 224 }  ,draw opacity=1 ]   (417.54,100) .. controls (407.82,100.08) and (407.15,180.08) .. (417.54,180) ;
\draw [color={rgb, 255:red, 189; green, 16; blue, 224 }  ,draw opacity=1 ] [dash pattern={on 0.84pt off 2.51pt}]  (417.54,100) .. controls (429.15,100.08) and (428.49,179.75) .. (417.54,180) ;
\draw  [color={rgb, 255:red, 208; green, 2; blue, 27 }  ,draw opacity=1 ][fill={rgb, 255:red, 208; green, 2; blue, 27 }  ,fill opacity=0.1 ] (180,160) .. controls (184.8,161.62) and (190.8,161.42) .. (200,160) .. controls (201.61,166.08) and (202.61,176.43) .. (201,184.02) .. controls (194.6,185.82) and (187.8,185.62) .. (181,185.42) .. controls (178.28,178.17) and (178.61,166.42) .. (180,160) -- cycle ;
\draw  [color={rgb, 255:red, 65; green, 117; blue, 5 }  ,draw opacity=1 ][fill={rgb, 255:red, 65; green, 117; blue, 5 }  ,fill opacity=0.1 ] (284.33,153.33) .. controls (292,153.88) and (295.4,154.08) .. (304.33,153.33) .. controls (302.8,158.48) and (302.4,165.08) .. (304.33,173.33) .. controls (297,173.48) and (292.8,173.68) .. (284.33,173.33) .. controls (281.61,166.08) and (282.94,159.75) .. (284.33,153.33) -- cycle ;
\draw [color={rgb, 255:red, 208; green, 2; blue, 27 }  ,draw opacity=1 ][line width=1.5]    (175.4,147.42) .. controls (164.4,144.82) and (154.41,159.12) .. (175.05,163.98) .. controls (195.68,168.85) and (235.15,162.97) .. (222.2,135.64) .. controls (209.25,108.31) and (176,118.13) .. (160.47,126.73) .. controls (144.93,135.33) and (128.93,160.95) .. (154.87,171.73) .. controls (180.8,182.52) and (277.71,166.72) .. (310.21,168.75) ;
\draw [color={rgb, 255:red, 65; green, 117; blue, 5 }  ,draw opacity=1 ][line width=1.5]    (278.94,160.08) .. controls (322.94,158.75) and (302.84,171.18) .. (340.51,168.84) ;
\draw  [color={rgb, 255:red, 65; green, 117; blue, 5 }  ,draw opacity=1 ][fill={rgb, 255:red, 65; green, 117; blue, 5 }  ,fill opacity=0.1 ] (315.93,152.53) .. controls (323.6,153.08) and (327,153.28) .. (335.93,152.53) .. controls (334.4,157.68) and (334,164.28) .. (335.93,172.53) .. controls (328.6,172.68) and (324.4,172.88) .. (315.93,172.53) .. controls (313.21,165.28) and (314.54,158.95) .. (315.93,152.53) -- cycle ;
\draw [color={rgb, 255:red, 65; green, 117; blue, 5 }  ,draw opacity=1 ][line width=1.5]    (310.26,159.28) .. controls (354.26,157.95) and (334.27,170.32) .. (371.94,167.99) ;
\draw  [color={rgb, 255:red, 65; green, 117; blue, 5 }  ,draw opacity=1 ][fill={rgb, 255:red, 65; green, 117; blue, 5 }  ,fill opacity=0.1 ] (347.73,152.33) .. controls (355.4,152.88) and (358.8,153.08) .. (367.73,152.33) .. controls (366.2,157.48) and (365.8,164.08) .. (367.73,172.33) .. controls (360.4,172.48) and (356.2,172.68) .. (347.73,172.33) .. controls (345.01,165.08) and (346.34,158.75) .. (347.73,152.33) -- cycle ;
\draw [color={rgb, 255:red, 65; green, 117; blue, 5 }  ,draw opacity=1 ][line width=1.5]    (342.06,159.08) .. controls (386.06,157.75) and (368.84,168.89) .. (406.51,166.56) ;
\draw  [color={rgb, 255:red, 65; green, 117; blue, 5 }  ,draw opacity=1 ][fill={rgb, 255:red, 65; green, 117; blue, 5 }  ,fill opacity=0.1 ] (384.53,151.73) .. controls (392.2,152.28) and (395.6,152.48) .. (404.53,151.73) .. controls (403,156.88) and (402.6,163.48) .. (404.53,171.73) .. controls (397.2,171.88) and (393,172.08) .. (384.53,171.73) .. controls (381.81,164.48) and (383.14,158.15) .. (384.53,151.73) -- cycle ;
\draw  [color={rgb, 255:red, 0; green, 97; blue, 212 }  ,draw opacity=1 ][fill={rgb, 255:red, 0; green, 97; blue, 212 }  ,fill opacity=0.1 ] (491.47,155.82) .. controls (496.74,158.23) and (502.87,159.22) .. (509.87,159.02) .. controls (510.47,164.62) and (509.77,174.23) .. (508.87,182.02) .. controls (501.47,182.02) and (497.11,182.35) .. (489.17,179.6) .. controls (487.47,172.22) and (489.27,161.42) .. (491.47,155.82) -- cycle ;
\draw [color={rgb, 255:red, 65; green, 117; blue, 5 }  ,draw opacity=1 ][line width=1.5]    (379.14,158.48) .. controls (423.14,157.15) and (480.4,174.35) .. (518.07,172.02) ;
\draw [color={rgb, 255:red, 0; green, 97; blue, 212 }  ,draw opacity=1 ][line width=1.5]    (473.01,162.54) .. controls (495.3,162.54) and (492.44,171.11) .. (518.44,168.25) .. controls (544.44,165.4) and (563.85,145.2) .. (547.83,125.75) .. controls (531.81,106.3) and (507.73,108.05) .. (488.33,115.25) .. controls (468.93,122.45) and (431.47,155.62) .. (494.27,161.02) .. controls (557.07,166.42) and (533.01,144.95) .. (525,147.55) ;
\draw  [draw opacity=0][fill={rgb, 255:red, 0; green, 0; blue, 0 }  ,fill opacity=1 ] (289.91,159.93) .. controls (289.91,158.99) and (290.67,158.24) .. (291.6,158.24) .. controls (292.54,158.24) and (293.3,158.99) .. (293.3,159.93) .. controls (293.3,160.87) and (292.54,161.63) .. (291.6,161.63) .. controls (290.67,161.63) and (289.91,160.87) .. (289.91,159.93) -- cycle ;
\draw  [draw opacity=0][fill={rgb, 255:red, 0; green, 0; blue, 0 }  ,fill opacity=1 ] (322.28,159.56) .. controls (322.28,158.62) and (323.04,157.86) .. (323.98,157.86) .. controls (324.91,157.86) and (325.67,158.62) .. (325.67,159.56) .. controls (325.67,160.49) and (324.91,161.25) .. (323.98,161.25) .. controls (323.04,161.25) and (322.28,160.49) .. (322.28,159.56) -- cycle ;
\draw  [draw opacity=0][fill={rgb, 255:red, 0; green, 0; blue, 0 }  ,fill opacity=1 ] (354.53,159.18) .. controls (354.53,158.24) and (355.29,157.49) .. (356.23,157.49) .. controls (357.16,157.49) and (357.92,158.24) .. (357.92,159.18) .. controls (357.92,160.12) and (357.16,160.88) .. (356.23,160.88) .. controls (355.29,160.88) and (354.53,160.12) .. (354.53,159.18) -- cycle ;
\draw  [draw opacity=0][fill={rgb, 255:red, 0; green, 0; blue, 0 }  ,fill opacity=1 ] (392.05,159.06) .. controls (392.05,158.12) and (392.81,157.36) .. (393.75,157.36) .. controls (394.68,157.36) and (395.44,158.12) .. (395.44,159.06) .. controls (395.44,159.99) and (394.68,160.75) .. (393.75,160.75) .. controls (392.81,160.75) and (392.05,159.99) .. (392.05,159.06) -- cycle ;
\draw [color={rgb, 255:red, 208; green, 2; blue, 27 }  ,draw opacity=1 ][line width=1.5]    (175.8,181.02) .. controls (216.2,174.22) and (226.6,188.42) .. (217.2,189.82) ;
\draw [color={rgb, 255:red, 0; green, 97; blue, 212 }  ,draw opacity=1 ][line width=1.5]    (528.38,192.75) .. controls (520.65,193.95) and (531.32,178.08) .. (483.05,174.88) ;
\draw  [draw opacity=0][fill={rgb, 255:red, 0; green, 0; blue, 0 }  ,fill opacity=1 ] (188.78,175.68) .. controls (188.78,174.74) and (189.54,173.99) .. (190.47,173.99) .. controls (191.41,173.99) and (192.17,174.74) .. (192.17,175.68) .. controls (192.17,176.62) and (191.41,177.38) .. (190.47,177.38) .. controls (189.54,177.38) and (188.78,176.62) .. (188.78,175.68) -- cycle ;
\draw  [draw opacity=0][fill={rgb, 255:red, 0; green, 0; blue, 0 }  ,fill opacity=1 ] (495.87,167.31) .. controls (495.87,166.37) and (496.63,165.61) .. (497.56,165.61) .. controls (498.5,165.61) and (499.26,166.37) .. (499.26,167.31) .. controls (499.26,168.24) and (498.5,169) .. (497.56,169) .. controls (496.63,169) and (495.87,168.24) .. (495.87,167.31) -- cycle ;
\draw [color={rgb, 255:red, 189; green, 16; blue, 224 }  ,draw opacity=1 ]   (284.29,100.33) .. controls (277.39,100.39) and (274.94,158.65) .. (277.87,163.32) .. controls (280.8,167.98) and (313.2,164.78) .. (313.07,168.65) .. controls (312.94,172.52) and (288.67,170.65) .. (282.8,171.98) .. controls (276.94,173.32) and (283.28,180.34) .. (284.29,180.33) ;

\draw (181.37,174.72) node [anchor=north east] [inner sep=0.75pt]  [color={rgb, 255:red, 208; green, 2; blue, 27 }  ,opacity=1 ,xscale=1.2,yscale=1.2]  {$R_{L}$};
\draw (490.88,171.4) node [anchor=north east] [inner sep=0.75pt]  [color={rgb, 255:red, 0; green, 97; blue, 212 }  ,opacity=1 ,xscale=1.2,yscale=1.2]  {$R_{R}$};
\draw (272.8,111.82) node [anchor=east] [inner sep=0.75pt]  [color={rgb, 255:red, 189; green, 16; blue, 224 }  ,opacity=1 ,xscale=1.2,yscale=1.2]  {$\gamma _{L}$};
\draw (421.86,110.9) node [anchor=west] [inner sep=0.75pt]  [color={rgb, 255:red, 189; green, 16; blue, 224 }  ,opacity=1 ,xscale=1.2,yscale=1.2]  {$\gamma _{R}$};
\draw (189.32,112.26) node [anchor=south] [inner sep=0.75pt]  [color={rgb, 255:red, 208; green, 2; blue, 27 }  ,opacity=1 ,xscale=1.2,yscale=1.2]  {$f( R_{L})$};
\draw (514.06,108.47) node [anchor=south] [inner sep=0.75pt]  [color={rgb, 255:red, 0; green, 97; blue, 212 }  ,opacity=1 ,xscale=1.2,yscale=1.2]  {$f( R_{R})$};
\draw (289.6,113.84) node [anchor=west] [inner sep=0.75pt]  [color={rgb, 255:red, 189; green, 16; blue, 224 }  ,opacity=1 ,xscale=1.2,yscale=1.2]  {$f( \gamma _{L})$};

\end{tikzpicture}
\vspace{-30pt}

\caption{The counterexamples of Example~\ref{ExFinal}: construction of $f_n$ for $n=4$. The black dots represent contractible fixed points. Top: first part of the proof; bottom: second part of the proof. Note that the second example gives a counterexample of Problem 2 of \cite{pollicott} (note that the set $\rho(f)$ of \cite{pollicott} is the \emph{punctual} rotation set): the rotation set of points has nonempty interior while the rotation vectors of periodic orbits are contained in the union of two planes.}\label{Fig14Alepablo}
\end{center}
\end{figure}

\begin{proof}
For the map $f\mapsto \rote(f)$, the counterexample is given by adapting \cite[Figure 14]{alepablo} (see Figure~\ref{Fig14Alepablo}, top): for $S$ a genus 2 surface, there exist a sequence $(f_n)_{n\ge 1}$ of $\Homeo_0(S)$ converging to $f\in\Homeo_0(f)$ and a nonempty interior set $\rho_0\subset H_1(S,\R)$ such that $\rot(f_n) \supset \rho_0$ for any $n\ge 1$ and $\rot(f)$ is included in the union of two 2-dimensional linear subspaces of $H_1(S,\R)$. 

More precisely, we fix two separating curves $\gamma_L$ and $\gamma_R$ of $S$ that are disjoint, and two rectangles $R_L$ and $R_R$ that are included in the two connected components of the complement of $\gamma_L\cup\gamma_R$ that are not annuli. For any $n$, pick $2n$ pairwise disjoint rectangles $R_1,\dots R_n,R'_1,\dots R'_n$ in the connected component of the complement of $\gamma_L\cup\gamma_R$ that is an annulus. We pick lifts $\wt R_L,\wt R_R, \wt R_1,\dots \wt R_n,\wt R'_1,\dots \wt R'_n$ of the rectangles to $\wt S$, and four deck transformations $T_1, T_2, T_3, T_4 \in G$ generating $\pi_1(S)$ such that the axes in $T_1$ and $T_2$ on $S$ are included in the connected component of the complement of $\gamma_L\cup\gamma_R$ containing $R_L$, and the axes in $T_3$ and $T_4$ on $S$ are included in the connected component of the complement of $\gamma_L\cup\gamma_R$ containing $R_R$. 

We then pick, for any $n\ge 1$, an homeomorphism $f_n\in \Homeo_0(S)$ such that the following intersections are Markovian:
\begin{itemize}
\item $\wt f(\wt R_L)$ and $\wt R_L$, $\wt f(\wt R_L)$ and $T_1 \wt R_L$, $\wt f(\wt R_L)$ and $T_2T_1 \wt R_L$ and $\wt f(\wt R_L)$ and $\wt R_1$;
\item for any $1\le i <n$, $\wt f(\wt R_i)$ and $\wt R_{i+1}$, and $\wt f(\wt R_n)$ and $\wt R_R$;
\item $\wt f(\wt R_R)$ and $\wt R_R$, $\wt f(\wt R_R)$ and $T_3 \wt R_R$, $\wt f(\wt R_R)$ and $T_4T_3 \wt R_R$ and $\wt f(\wt R_R)$ and $\wt R'_1$;
\item for any $1\le i <n$, $\wt f(\wt R'_i)$ and $\wt R'_{i+1}$, and $\wt f(\wt R'_n)$ and $\wt R_L$;
\end{itemize}
We moreover suppose that $\wt f_n^{n}(\gamma_L)\cap\gamma_R = \wt f_n^{n}(\gamma_R)\cap\gamma_L = \emptyset$. Finally, we suppose that the sequence $(f_n)_n$ converges towards $f\in\Homeo_0(S)$ such that the restriction of $f$ to the connected component of the complement of $\gamma_L\cup\gamma_R$ that is an annulus is the identity. Such a construction can be made by hand by using time-dependant vector fields. 

We see that for any $n\ge 1$, we have 
\[\conv\big([T_1], [T_1]+[T_2], [T_3], [T_3]+[T_4]\big)  \subset \rote(f_n),\]
while
\[\rot(f) \subset \operatorname{span}\big([T_1], [T_2]\big) \cup \operatorname{span}\big([T_4], [T_4]\big)\]
(in fact, $\rot(f)$ has to contain the union of two convex sets of dimension 2). 
\bigskip

For the example concerning $\rot(f)$, it suffices to adapt the previous example in the flavour of \cite[Figure 1]{guiheneuf2023hyperbolic} (see Figure~\ref{Fig14Alepablo}, bottom): we do not ask for the properties about the $R'_i$, but require instead that $f(\gamma_L)\cap \gamma_L = \emptyset$. 
\end{proof}

\small

\bibliographystyle{alpha}
\bibliography{Biblio}

\end{document}